\numberwithin{equation}{section}
\definecolor{violet}{rgb}{0.580,0.,0.827}
\renewcommand\d{\partial}
\newcommand\dD{\mathrm{d}}
\newcommand\dd{\dD}
\def\eps{\varepsilon }
\newcommand{\beq}{\begin{equation}}
\newcommand{\eeq}{\end{equation}}
\newcommand{\beqa}{\begin{eqnarray}}
\newcommand{\eeqa}{\end{eqnarray}}
\newcommand\br{\begin{remark}}
\newcommand\er{\end{remark}}
\newcommand\bp{\begin{pmatrix}}
\newcommand\ep{\end{pmatrix}}
\newcommand{\be}{\begin{equation}}
\newcommand{\ee}{\end{equation}}
\newcommand\ba{\begin{equation}\begin{aligned}}
\newcommand\ea{\end{aligned}\end{equation}}
\newcommand\ds{\displaystyle}
\newcommand{\beg}{\begin{example}}
\newcommand{\eeg}{\end{exaplem}}
\newcommand{\bpr}{\begin{proposition}}
\newcommand{\epr}{\end{proposition}}
\newcommand{\bt}{\begin{theorem}}
\newcommand{\et}{\end{theorem}}
\newcommand{\bc}{\begin{corollary}}
\newcommand{\ec}{\end{corollary}}
\newcommand{\bl}{\begin{lemma}}
\newcommand{\el}{\end{lemma}}
\newcommand{\bd}{\begin{definition}}
\newcommand{\ed}{\end{definition}}
\newcommand{\brs}{\begin{remarks}}
\newcommand{\ers}{\end{remarks}}
\newcommand{\mycomment}[1]{}
\newtheorem{theorem}{Theorem}[section]
\newtheorem{proposition}[theorem]{Proposition}
\newtheorem{corollary}[theorem]{Corollary}
\newtheorem{lemma}[theorem]{Lemma}
\newtheorem{remark}[theorem]{Remark}
\newtheorem{definition}[theorem]{Definition}
\newtheorem{example}[theorem]{Example}
\newcommand{\N}{{\mathbb N}}
\newcommand{\R}{{\mathbb R}}
\newcommand\bx{{\mathbf x}}
\newcommand{\bz}{\mathbf z}
\newcommand{\by}{\mathbf y}
\newcommand{\bu}{\mathbf u}
\newcommand\bv{{\mathbf v}}
\newcommand\bw{{\mathbf w}}
\newcommand\Div{{\rm div}}
\newcommand\bA{{\mathbf A}}
\newcommand\bB{{\mathbf B}}
\newcommand\bE{{\mathbf E}}
\newcommand\bF{{\mathbf F}}
\newcommand\bK{{\mathbf K}}
\newcommand\bL{{\mathbf L}}
\newcommand\bU{{\mathbf U}}
\newcommand\cE{{\mathcal E}}
\newcommand\cO{{\mathcal O}}
\numberwithin{equation}{section}
\numberwithin{figure}{section}
\title[Crank-Nicolson schemes for the Vlasov-Poisson
system]{A modified Crank-Nicolson scheme for the Vlasov-Poisson system with a strong external magnetic field}
\keywords{Vlasov-Poisson systems; Strong magnetic field; Particle methods.}
\subjclass[2010]{
  Primary:
  65M75 %Probabilistic methods, particle methods, etc. for initial value and initial-boundary value problems involving PDEs
  Secondary:
  82D10 %Statistical mechanics of plasmas
  76X05 %Ionized gas flow in electromagnetic fields; plasmic flow
  35Q83 %Vlasov equations 
}
\author{Francis Filbet}
\address{Universit\'e de, Toulouse , Institut de Math\'ematiques de Toulouse, France}
\email{francis.filbet@math.univ-toulouse.fr}
\thanks{}
\author{L.~Miguel Rodrigues}
\address{Université de Rennes, CNRS, IRMAR - UMR 6625, F-35000 Rennes, France}
\email{luis-miguel.rodrigues@univ-rennes.fr}
\thanks{Research of L.M.R. was partially supported by the ANR Project HEAD ANR-24-CE40-3260 and the Institut Universitaire de France.}
\author{Kim Han Trinh}
\address{Université de Rennes, CNRS, IRMAR - UMR 6625, F-35000 Rennes, France}
\email{kim-han.trinh@univ-rennes.fr}
\thanks{Research of K.H.T. was partially supported by the Institut Universitaire de France.}
\begin{document}

%\linenumbers

\maketitle

\bigskip

\begin{abstract}
We propose and study  a Particle-In-Cell (PIC) method utilizing
Crank-Nicolson time discretization for the Vlasov-Poisson system with
a strong, inhomogeneous external magnetic field with fixed direction.
Our focus is on particle dynamics in the plane orthogonal to the
magnetic field.
 In this regime, traditional explicit schemes are constrained by
 stability conditions linked to the small Larmor radius and plasma
 frequency \cite{RiCh20}. To avoid this limitation, our approach is based on  numerical
schemes \cite{FiRo16, FiRo17, FiRo23},  providing a consistent PIC
discretization of the guiding-center system taking into account
variations of the magnetic field. We carry out some theoretical proofs and perform several numerical experiments to validate the method demonstrating its robustness and accuracy.
\end{abstract}

\vspace{0.1cm}

\tableofcontents

%%%%%%%%%%%%%%%%%%%%%%%%%%%%%%%%%%%%%%%%%%%%%%%%%%%%%%%%%%%%%%%
%
%										INTRODUCTION
%
%%%%%%%%%%%%%%%%%%%%%%%%%%%%%%%%%%%%%%%%%%%%%%%%%%%%%%%%%%%%%%%

\section{Introduction}
\label{sec:1}
This paper focuses on plasma confinement in the presence of a strong,
spatially varying external magnetic field, where charged particles evolve under the influence of both electrostatic forces and intense
magnetic confinement. Such configurations are characteristic of
tokamak plasmas \cite{bellan_2006_fundamentals, miyamoto_2006_plasma},
where the magnetic field plays a crucial role in containing particles
within the core of the device. Kinetic models, which provide a
mesoscopic description of charged particle dynamics, are highly
accurate and essential tools for investigating the behavior of
thermonuclear fusion plasmas.
\\
 We assume that collective effects dominate, and the plasma is modeled
 entirely through \textcolor{red}{kinetic} equations. The primary unknown is the
 particle number density $f\equiv f(t,\bx,\bv)$, which depends on time
 $t \geq 0$, position $\bx\in\Omega\subset \R^d$, and velocity
 $\bv\in\R^d$, with $d \geq 2$.  Its behaviour is given by
the Vlasov equation, 
\begin{equation}
\label{eq:vlasov} 
\frac{\partial f}{\partial t}\,+\,\bv\cdot\nabla_\bx f \,+\,\mathbf{F}(t,\bx,\bv)\cdot\nabla_\bv f \,=\, 0,
\end{equation}
where the force field $F(t,\bx,\bv)$ is coupled with the distribution function $f$ giving a nonlinear system.

Here, we  consider the two-dimensional case where  the magnetic field acts in the vertical direction and
only depends on $\bx=(x_1,x_2)\in \R^2$, that is,  
\[
\bB(\bx) \,\,=\,\, \frac{1}{\eps} \, \left( \begin{array}{l}0\\0\\b(\bx)\end{array}\right)\,, 
\]
where the function  $b$ describes the variations of the amplitude with $b \in
W^{1,\infty}(\mathbb{R}^{2})$  and
\begin{equation}
  \label{eq:1.3}
b(\bx)\geq b_0>0\,.
\end{equation}
The number $\eps>0$ is a small parameter related
to the ratio between the reciprocal Larmor frequency and the advection
time scale (see \cite{DeFi16,HaMe03, HaWa18} and the references therein for more
details on scalings).

We will focus on the long-time behavior of  positive ions in the
orthogonal plane to the external magnetic field. Therefore, the
distribution function  $f_\eps$ is a solution to
the Vlasov equation coupled with the Poisson equation for the
electrical potential $\phi_\eps$ generated by the motion of these
charged particles, that is,
\begin{flalign} \label{eq:VP_system}
    \begin{dcases}
        \eps \dfrac{\partial f_{\eps}}{\partial t} \,+\, \mathbf{v} \cdot \nabla_{\mathbf{x}} f_{\eps} \,+\, \left( \mathbf{E} _{\eps}(t, \mathbf{x}) - b(\mathbf{x}) \,\dfrac{\mathbf{v}^{\perp}}{\eps} \right) \cdot \nabla_{\mathbf{v}} f_{\eps} \,=\, 0\,, \\
        \mathbf{E}_\eps \,=\, - \nabla_{\mathbf{x}} \phi_\eps\,, \qquad - \Delta_{\mathbf{x}} \phi_\eps\,=\, \rho_\eps\,,
    \end{dcases}
  \end{flalign}
  where $\bv^\perp\,=\,(-v_2,v_1)\in\R^2$ and the density  $\rho_\eps$ is given by
  \[
\rho_\eps( t, \mathbf{x}) \,:=\, \displaystyle \int_{\mathbb{R}^{2}} f_{\eps}(t,\bx,\bv) \,\dD\bv\,.
  \]

Here we aim to  construct numerical approximations for the
Vlasov-Poisson system \eqref{eq:VP_system} using particle methods (see
\cite{BiLa85}), which involve  in
approximating the distribution function by a finite number of
macro-particles. The trajectories of these particles are determined from
the characteristic curves associated  to the Vlasov equation
\begin{flalign} \label{eq:ODE_system}
    \begin{dcases}
        \eps \dfrac{\dD \mathbf{x}_{\eps} }{\dD t} = \mathbf{v}_{\eps}\,, \\[0.8em]
        \eps \dfrac{\dD \mathbf{v}_{\eps} }{\dD t} = \mathbf{E}_\eps(t, \mathbf{x}_{\eps}) - b(\mathbf{x}_{\eps}) \dfrac{ \mathbf{v}_{\eps}^{\perp}}{\eps}\,, \\[0.8em]
        \mathbf{x}_{\eps}(0) = \mathbf{x}_\eps^{0}\,, \qquad \mathbf{v}_{\eps}(0) = \mathbf{v}_\eps^{0}\,,
    \end{dcases}
  \end{flalign}
 then  we use the conservation of $f_\eps$ along the characteristic
  curves, that is,
  \begin{flalign}
    \nonumber
    f_{\eps}(t, \mathbf{x}_{\eps}(t), \mathbf{v}_{\eps}(t)) \, =\, f_{\eps} (t^{0}, \mathbf{x}^{0}_{\eps}, \mathbf{v}^{0}_{\eps})\,.
  \end{flalign}

  In particular, we will focus on the construction of numerical
  schemes for the ODE system \eqref{eq:ODE_system}, where the time
  step $\Delta t$ is arbitrary and free from any stability
  constraint. Following the work of Filbet and Rodrigues \cite{FiRo17,
    FiRo20, FiRo23}, the ODE system can be decomposed into fast
  dynamics, driven by the fast variable $\bv_{\eps}$, and slow
  dynamics, governed by the variables $(\bx_{\eps}, e_{\eps})$,
  where $e_{\eps} = \frac{1}{2} | \bv_{\eps} |^{2}$. This
  decomposition allows the design of a class of numerical schemes that
  precisely capture slow-scale variables, while faster scales are
  correctly filtered. More precisely, when the intensity of the magnetic
  field is sufficiently large, {\it i.e.} when $\eps \ll 1$, the scheme
  provides a consistent approximation to the asymptotic model
  \cite{FiRo17}.

%%%%%%%%%%%%%%%%%%%%%%%%%%%% Sub section %%%%%%%%%%%%%%%%%%%%%%%%%%
\subsection{Formal asymptotic behavior for a given electromagnetic field}
Before describing  a numerical scheme  for the nonlinear
Vlasov-Poisson system \eqref{eq:VP_system}, we first briefly expound on what may be expected from the continuous model  with
a given electric field in the limit $\eps\to 0$. For this
purpose, we consider a  function $\phi \in
W^{3, \infty}$ such that, for all $\mathbf{x} \in \mathbb{R}^{2}$,
$\mathbf{E}(\mathbf{x}) = - \nabla_{\mathbf{x}} \phi(\mathbf{x})$ and observe that
the system \eqref{eq:ODE_system} has an Hamiltonian
structure associated with the total  energy  $\mathcal{E}_{\eps}(t) $,
\begin{flalign}
  \label{total_energy}
 \mathcal{E}_{\eps}(t) = \dfrac{\| \mathbf{v}_{\eps}(t) \|^{2}}{2} + \phi(\mathbf{x}_{\eps}(t)), \qquad t \geq 0\,,
\end{flalign}
which is an invariant of the system.  Therefore,  to study the limit $\eps \rightarrow
0$,  we first define the  kinetic energy as a slow scale variable
\[
e_{\eps}(t) \,:=\, \frac{1}{2} \| \mathbf{v}_{\eps}(t)
\|^{2}\,,
\]
leading to the study  of the augmented system 
\begin{flalign} \label{eq:augmented_ODE_system}
    \begin{dcases}
        \dfrac{\dD \mathbf{x}_{\eps}}{\dD t} \,=\, \frac{\mathbf{v}_{\eps}}{\eps}\,, \\[0.9em]
        \dfrac{\dD e_{\eps}}{\dD t} \;=\, \frac{1}{\eps}\,\mathbf{E}(
        \mathbf{x}_{\eps}) \cdot \mathbf{v}_{\eps}\,,
            \end{dcases}
  \end{flalign}
  still coupled with the equation on $\bv_\eps$
  \begin{equation} \label{eq:v}
\eps \dfrac{\dD \mathbf{v}_{\eps}}{\dD t} \,=\, \mathbf{E}(\mathbf{x}_{\eps})
\,-\, b(\mathbf{x}_{\eps}) \,\dfrac{\mathbf{v}_{\eps}^{\perp}}{\eps}\,,
\end{equation}
which describes the fastest scale. Of course the second equation of
\eqref{eq:augmented_ODE_system} is a consequence of \eqref{eq:v}  but it retains only its slower part. 

Following \cite{FiRo20}, one may prove that $\mathbf{x}_{\eps}(t) \rightarrow \mathbf{y}(t)$ and
$e_{\eps}(t) \rightarrow g(t)$,  as $\eps \rightarrow 0$ where
$(\by,g)$ is solution to the so-called guiding center system,
\begin{flalign} \label{eq:guiding_center_system}
    \begin{dcases}
        \dfrac{\dD\by }{\dD t} \,=\, - \dfrac{\mathbf{E}^{\perp}}{b} (\mathbf{y}) \,+\, g\, \dfrac{\nabla_{\mathbf{y}}^{\perp} b}{b^{2}}(\mathbf{y})\,, \\[0.9em]
        \dfrac{\dD g }{\dD t}  \,=\, g \,\mathbf{E} \cdot \dfrac{\nabla_{\mathbf{y}}^{\perp} b}{b^{2}}(\mathbf{y})\,.
    \end{dcases}
\end{flalign}
For the convenience of the reader we provide in Appendix~\ref{s:app} the main formal computations leading to \eqref{eq:guiding_center_system}.

Furthermore, we may identify the limit for the  total energy $\mathcal{E}_\eps (t)$, as $\eps \rightarrow 0$, this
quantity converging to
\begin{flalign}
   \nonumber
    \mathcal{E}_{gc}(t) \,:=\, g(t) \,+\, \phi(\mathbf{y}(t))\,, 
\end{flalign}
which is indeed an invariant of the guiding center model \eqref{eq:guiding_center_system},
\begin{flalign}
  \label{energy_conserve} 
	% \nonumber
    \dfrac{\dD \mathcal{E}_{gc} }{\dD t} \,=\, \dfrac{\dD g}{\dD t}  \,+\, \nabla_{\mathbf{y}} \phi(\mathbf{y}) \cdot \dfrac{\dD\by}{\dD t} \,=\, g \,\mathbf{E} \cdot \dfrac{\nabla_{\mathbf{y}}^{\perp} b}{b^{2}} (\mathbf{y}) \,-\, g \,\mathbf{E} \cdot \dfrac{\nabla_{\mathbf{y}}^{\perp} b}{b^{2}} (\mathbf{y}) \,=\, 0\,.
\end{flalign}
Furthermore,  we may define the magnetic moment as
\[ 
\mu_{gc} (t) \,=\,  \dfrac{g}{b(\mathbf{y})}\,, 
\]
which  is an invariant for the guiding center system
\eqref{eq:guiding_center_system} without counterpart for the original
\eqref{eq:ODE_system}, thus called an adiabatic invariant for
\eqref{eq:guiding_center_system}. Indeed, we have 
\begin{flalign} \label{adiabatic_conserve}
 % \nonumber
	\dfrac{\dD \mu_{gc}}{\dD t} \,=\, \dfrac{\dD }{\dD t} \left(  \dfrac{g}{b(\mathbf{y})} \right) \,=\,  g\, \mathbf{E} \cdot \dfrac{\nabla_{\mathbf{y}}^{\perp} b}{b^{3}}(\mathbf{y}) \,-\, g \dfrac{\nabla_{\mathbf{y}} b}{b^{2}} (\mathbf{y}) \cdot \left( - \dfrac{\mathbf{E}^{\perp}}{b} (\mathbf{y}) + g  \dfrac{\nabla_{\mathbf{y}}^{\perp} b}{b^{2}} (\mathbf{y}) \right) \,=\, 0\,.
\end{flalign}

Reproducing these properties at the discrete level is a target when designing a scheme for  \eqref{eq:ODE_system} preserving  asymptotics when $\eps$ tends to zero.

%%%%%%%%%%%%%%%%%%%%%%%%%%%% Sub section %%%%%%%%%%%%%%%%%%%%%%%%%%
\subsection{Formal asymptotic limit of the Vlasov-Poisson system}

We come back to the Vlasov-Poisson system \eqref{eq:VP_system}. Here one cannot anymore remain completely at the characteristic level \eqref{eq:ODE_system}. Moreover whereas arguments of the previous subsection could be turned into sound analytic arguments (by slight variations on \cite{FiRo20}), to the best of our knowledge the present situation does not fall directly into the range of the actually available analysis of gyro-kinetic limits. We refer the reader to the introductions of \cite{FiRo20,Vu} and references therein for a representative sample of such analytic techniques.

Nevertheless the known results and the previous subsection strongly suggests for $(f^\eps,\bE^\eps)$ solving the
Vlasov-Poisson system \eqref{eq:VP_system} that in the limit
$\eps\rightarrow 0$, the electric field $\bE^\eps$ and the following
velocity-averaged version of $\bar{F}^\eps$ 
\[
\bar{F}^\eps\,:\,(t,\bx,e)\mapsto\frac{1}{2\pi}\int_0^{2\pi} f^\eps(t,\bx, \sqrt{2e}(\cos(\theta),\sin(\theta)))\,\dd \theta
\]
converge to some $\bE:(t,\by)\mapsto \bE(t,\by)$ and some\footnote{We use distinct notation of variables for limiting functions to be consistent with asymptotic analysis at the characteristic level. This is of course completely immaterial.} $f:(t,\by,g)\mapsto f(t,\by,g)$ solving the following system consisting in a  \textcolor{red}{kinetic} equation supplemented with a Poisson equation, 
\begin{equation}
\left\{
\begin{array}{l}
 \ds \frac{\d f}{\d t}\,+\,\mathbf{U}\cdot\nabla_{\by} f\,+\, u_g \frac{\d f}{\d g}\,=\,0,
\\ \, \\
\ds -\Delta_{\by}\phi\,=\,\rho\,,\quad
\rho=2\pi\,\int_{\R^+} f \,\dD g,
\end{array}
\right.
   \label{eq:gc}
  \end{equation}
where the velocity field is given by
\[
{\bf U}(t,\by,g)\,=\,\bF(t,\by)  \,+\, g \,\frac{\nabla_\by^\perp b}{b^2}(t,\by)\,, \qquad
u_g = -\Div_\by(\bF)(t,\by) \,g\,,
\]  
with $\bE = -\nabla_\by \phi$, $\bF=-\bE^\perp/b$. {We remind the reader that $\bU$ contains two classical components of the guiding center velocity, the $\bE\times\bB$ drift and the grad $\bB$ drift.}

%%%%%%%%%%%%%%%%%%%%%%% Sub section %%%%%%%%%%%%%%%%%%%%%%%%%%%%%%%%%
\subsection{Particle methods for the Vlasov-Poisson system}

To make the most of the previous discussions in order to discretize
the Vlasov equation \eqref{eq:VP_system}, particle methods are
particularly well suited since they directly involve an approximation
of the characteristic curves \eqref{eq:ODE_system}. Here, we will
consider the Particle-In-Cell (PIC) method, in which trajectories are
computed {\it via} the characteristic curves \eqref{eq:ODE_system}, while the self-consistent electric field is calculated using the Poisson equation on a grid of the physical space. We refer the reader to \cite{BiLa85, DeDe17} or \cite{FiRo16} for a brief review of particle methods.

To keep the notation as concise as possible, we temporarily omit the
dependence of solutions on $\eps$. The starting point is the
approximation of the solution $f$, which solves \eqref{eq:VP_system},
by a finite sum of smoothed functions --- viewed as macro particles. More
explicitly, in dimension $d$, one computes
\begin{flalign*}
	f_{N}(t, \mathbf{x}, \mathbf{v}) = \sum \limits_{1 \leq k \leq N} \omega_{k} \,\varphi_{\alpha} \left( \mathbf{x} - \mathbf{x}_{k}(t) \right) \otimes\varphi_{\alpha} \left( \mathbf{v} - \mathbf{v}_{k}(t) \right)\,,
\end{flalign*}
where $\varphi_{\alpha} = \alpha^{-d} \varphi(\cdot / \alpha)$ is a
particle shape function with radius proportional to $\alpha$ --- usually
seen as an approximation of the Dirac measure $\delta_{0}$ ---
obtained by rescaling a fixed compactly supported mollifier $\varphi$
whereas the set $(\mathbf{x}_{k}, \mathbf{v}_{k})_{1 \leq k \leq N}$
represents the position in phase space of $N$ macro-particles evolving
along characteristic curves \eqref{eq:ODE_system} from the initial
data $(\mathbf{x}^{0}_{k}, \mathbf{v}^{0}_{k}), 1 \leq k \leq N$. More
explicitly,  $(\mathbf{x}_{k}, \mathbf{v}_{k})_{1 \leq k \leq N}$ is
solution to 
\begin{flalign*}
	\begin{dcases}
		\eps \dfrac{\dD \mathbf{x}_{k}}{\dD t} = \mathbf{v}_{k}, \\
		\eps \textcolor{red}{\dfrac{\dD \mathbf{v}_{k}}{\dD t}} \,=\, \mathbf{E}(t, \mathbf{x}_{k}) - b(t, \mathbf{x}_{k}) \dfrac{\mathbf{v}_{k}^{\perp}}{\eps}\,, \\[0.9em]
		\mathbf{x}_{k}(0)\,=\, \mathbf{x}_{k}^{0}, \quad  \mathbf{v}_{k}(0)\,=\, \mathbf{v}_{k}^{0}\,,
	\end{dcases}
\end{flalign*} 
where the electric field $\mathbf{E}$  is computed by discretizing the
Poisson equation on a mesh of the physical space.

\textcolor{red}{Here, we deliberately choose to use a classical PIC method with $P_1$
reconstruction for the density and electric field in order to focus on
resolving particle trajectories with a time step large compared to the
$\varepsilon$ scale parameter. Our approach can be easily extended to much
more sophisticated PIC methods. For example, forward-backward
Lagrangian methods \cite{MCPFC} reduce fluctuations in the density
reconstruction step.} In recent years, other advanced methods have been developed to
improve the stability properties of the Particle-In-Cell (PIC)
method\footnote{For a discussion of some other classes of methods, we
  refer to the introduction of \cite{FiRo23} and some references
  therein.} in the presence of a large, inhomogeneous external
magnetic field. Among these, the earliest schemes were introduced by
Boris \cite{Boris70, BiLa85} for relativistic plasma simulation. It is
a second-order explicit method, often referred to as an explicit PIC
method, employing a time-centered electromagnetic field and an
averaged phase-space representation $(\mathbf{x}, \mathbf{v})$ for
\eqref{eq:ODE_system}. Later, this scheme was \textcolor{red}{applied} by Parker and
Birdsall \cite{PaBi91} to address the high magnetic field regime,
aiming to accurately capture drift motions of particles in three
dimensions. However, these standard explicit PIC approaches still
suffer from temporal numerical stability constraints imposed by the
Courant-Friedrichs-Lewy (CFL) condition \cite{BiLa85}. As a result,
despite their simplicity and computational efficiency, these schemes
are significantly constrained in high-field regimes.

To overcome this lack of stability, several implicit PIC schemes have
been developed to solve \eqref{eq:ODE_system} and to capture grad $\bB$
drifts in strongly magnetized plasma. We refer to Brackbill, Forslund,
and Vu \cite{BrFo85, VuBr95}, who introduce an effective force into
the velocity equation such that the scheme remains consistent with
\eqref{eq:ODE_system} for small time steps. The scheme is formulated
in a fully implicit manner as a modified version of the Crank-Nicolson
scheme. Alternatively, the magnetized implicit (MI) scheme proposed by
Genoni, Clark, and Welch \cite{GeCl10} employs a two-step
predictor-corrector approximation. However, these schemes overlook the
role of kinetic energy, which significantly contributes to particle
motion when $\varepsilon \rightarrow 0$, as shown in
\eqref{eq:guiding_center_system}. Consequently, these schemes fail to
capture the correct regime when, for a fixed time step, $\varepsilon
\rightarrow 0$.

More recently,  Ricketson, Chac\'on and Chen \cite{RiCh20,
  ChCh23} built upon the Crank-Nicolson scheme with an additional
effective force designed to achieve two objectives. First, they
addressed the challenges arising in the regime $\eps \ll 1$, by
capturing  grad $\bB$ drifts. Second, the additional force is designed
to conserve energy for all $\eps > 0$. However,  this method still
requires to adapt the time step when $\eps$ becomes small. In parallel, building on \cite{BoFi16},
Filbet and Rodrigues proposed a class of semi-implicit methods (IMEX)
where the position is updated explicitly whereas velocity is treated
implicitly \cite{FiRo16,FiRo17, FiRo23}. These schemes are developed to solve
the augmented system \eqref{eq:augmented_ODE_system}, which introduces
additional variable to separate slow scale and fast scale dynamics. As
a result, in the regime  $\eps \rightarrow 0$ and for a fixed time
step,  IMEX schemes accurately describe the dynamics of both position
and kinetic energy. This ensures a consistent approximation  to the
guiding center system \eqref{eq:augmented_ODE_system}.

\textcolor{red}{
Another strategy has been developed to accurately
capture the dynamics of \eqref{eq:ODE_system} by following fast
oscillations. This approach works well  when the magnetic field is
constant or when it varies slowly. For instance, the two-scaled
formulation method, proposed in \cite{CrLe17}, employs two time variables to split fast and
slow scales. Additionally,  a class of Lie-Trotter type splitting
schemes coupled with  exponential integrators has  been developed by
Wang and Zhao \cite{WaZha21} to provide a first order approximation with respect to $\eps$. These  schemes are very successful for uniform or
slowly varying magnetic field, but require a deeply understanding of
the fast oscillations. }

In this article, we propose to delve deeper and extend the strategy
already proposed by Filbet and Rodrigues \cite{FiRo16, FiRo17}
for Crank-Nicolson-type schemes. On the one hand, these schemes are
widely recognized in the computational physics community and are
valued for their effective energy preservation \cite{BrFo85, VuBr95,
  RiCh20}. On the other hand, it is important to note that the schemes
proposed by Filbet and Rodrigues, relying on IMEX methods that are
more dissipative, can sometimes compromise their accuracy for
intermediate values of the parameter $\varepsilon$. The present work
aims to maximize their efficiency and robustness by applying
Crank-Nicolson schemes to the augmented system
\eqref{eq:augmented_ODE_system} to separate slow and fast scale
dynamics. Moreover, this numerical scheme is implemented within the
PIC framework for long-term simulations of the Vlasov-Poisson system
\eqref{eq:VP_system}.

The rest of the paper is organized as follows. In Section
\ref{sec:Review_schemes}, we recall and analyze several numerical
schemes based on the Crank-Nicolson scheme, including those developed
by Brackbill, Forslund, and Vu \cite{BrFo85, VuBr95} as well as
Ricketson and Chacón \cite{RiCh20}. These schemes will be
analyzed in the asymptotic limit $\varepsilon \rightarrow 0$ with a
fixed time step to clarify the importance of decomposing the solution
into fast and slow variables, as in \cite{FiRo16, FiRo17}. In Section
\ref{sec:Modified_CN}, we propose a new numerical scheme built upon
the Crank-Nicolson scheme, following the strategy of Filbet and
Rodrigues in \cite{FiRo17}, and we examine its accuracy in the regime
$\varepsilon \rightarrow 0$. Finally, in Section
\ref{sec:Numerical_simulation}, we present numerical experiments for
the new scheme, both for the computation of single-particle motion and
as a particle pusher within the PIC framework for the Vlasov-Poisson
system.

\bigskip

\noindent {\bf Acknowledgement.} KHT expresses his appreciation of the hospitality of IMT, Universit\'e Toulouse III, during the preparation of the present contribution. FF and LMR are grateful to Luis Chac\'on for stimulating discussions that have motivated the present piece of work.

% --------------------------------------------------------------------------------------------------------------%
% 								SECTION 2		      									                                                       %
%--------------------------------------------------------------------------------------------------------------%

\section{Review of Crank-Nicolson schemes}
\label{sec:Review_schemes}
In this section, we aim to discuss the Crank-Nicolson method, applied in the framework of Particle-In-Cell methods for the Vlasov-Poisson system. In \cite{SiTa92}, the authors show that the Crank-Nicolson scheme is second-order accurate, unconditionally stable, and energy-conserving for quadratic potentials when considering the system for a single particle motion \eqref{eq:ODE_system}. Later, this scheme has been studied for the Vlasov-Poisson system \cite{VuBr95, ChCh23, RiCh20}.

Here, we will review different modified Crank-Nicolson schemes
proposed in the literature and discuss their conservation properties
and asymptotic behavior when $\varepsilon$ approaches zero, that is,
when the external magnetic field becomes large. Our aim is to
investigate the consistency of the numerical approximation with the
guiding center model \eqref{eq:guiding_center_system} at the discrete
level in the limit as $\varepsilon \to 0$.

Let us start with the original Crank-Nicolson scheme and consider a time step $\Delta t >0$ and  $t^n=n\,\Delta t$, for $n\in\N$, we
define $(\bx^n_\eps, \bv^n_\eps)$ as an approximation of the solution
$(\bx_\eps, \bv_\eps)$ to \eqref{eq:ODE_system}. Applying the Crank-Nicolson scheme, the
sequence $(\bx_\eps^n,\bv_\eps^n)_{n\in\N}$ is given by 
\begin{equation}
  \label{scheme:CNS}
    \begin{dcases}
        \eps\, \dfrac{\mathbf{x}^{n+1}_{\eps} - \mathbf{x}^{n}_{\eps}}{\Delta t} \,=\, \mathbf{v}^{n+1/2}_{\eps}\,, \\[0.9em]
        \eps\, \dfrac{\mathbf{v}^{n+1}_{\eps} - \mathbf{v}^{n}_{\eps}}{\Delta t} \,=\, \mathbf{E}(\mathbf{x}^{n+1/2}_{\eps}) \,-\, b(\mathbf{x}^{n+1/2}_{\eps}) \,\dfrac{ ( \mathbf{v}^{n+1/2}_{\eps} )^{\perp}}{\eps}\,, 
    \end{dcases}
\end{equation}
where 
\begin{equation}
    \nonumber
    \mathbf{v}^{n+1/2}_{\eps} \,=\, \dfrac{\mathbf{v}^{n+1}_{\eps} +
      \mathbf{v}^{n}_{\eps}}{2} \qquad{\rm and}\qquad \mathbf{x}^{n+1/2}_{\eps} \,=\, \dfrac{\mathbf{x}^{n+1}_{\eps} + \mathbf{x}^{n}_{\eps}}{2}\,.  
    \end{equation}
First, it is worth mentioning that  this scheme provides a good
approximation of the energy for a wide range  of values of 
  $\eps$. To be precise, assume that the electric field derives from a
  given smooth potential $\phi$, hence  we have $\mathbf{E} = -
  \nabla_{\mathbf{x}} \phi$ and  the discrete kinetic energy is
  defined as
  $$
  e^{n}_{\eps}
  = \frac{1}{2} \| \mathbf{v}^{n}_{\eps} \|^{2}.
  $$
  The total discrete
  energy is then
\begin{flalign}
    \mathcal{E}^{n}_{\eps} \,=\, e^{n}_{\eps} + \phi(\bx_\eps^{n}), \qquad n \geq 0\,.
\end{flalign}
From \eqref{scheme:CNS}, it follows that the variation of the  total energy is given by
\begin{flalign} \label{eq:te_transform_CN}
   % \nonumber
    \dfrac{\mathcal{E}^{n+1}_{\eps} -
      \mathcal{E}^{n}_{\eps}}{\Delta t} & \,=\,
    \dfrac{e^{n+1}_{\eps} - e^{n}_{\eps}}{\Delta t} +
    \dfrac{\phi(\bx_{\eps}^{n+1}) -
      \phi(\bx_{\eps}^{n})}{\Delta t}\,,
 %   \nonumber
    \\[0.9em]
%    & \,=\, \mathbf{E}(\mathbf{x}^{n+1/2}_{\eps}) \cdot \dfrac{\mathbf{v}_{\eps}^{n+1/2}}{\eps} + \dfrac{\phi(\bx_{\eps}^{n+1}) -
%      \phi(\bx_{\eps}^{n})}{\Delta t}\,, \\[0.9em]
    \nonumber
    & \,=\, - \nabla_{\mathbf{x}} \phi(\bx_\eps^{n+1/2}) \cdot \dfrac{\mathbf{x}_{\eps}^{n+1} - \mathbf{x}_{\eps}^{n}}{\Delta t} + \dfrac{\phi(\bx_{\eps}^{n+1}) - \phi(\bx_{\eps}^{n})}{\Delta t}\,.
\end{flalign}
where $\mathbf{E}(\mathbf{x}^{n+1/2}_{\eps}) = - \nabla_{\mathbf{x}}
\phi(\mathbf{x}_{\eps}^{n+1/2})$. Therefore, as is well-known, this scheme  conserves the
discrete energy only for quadratic  potentials and for more general potential $\phi \in W^{3, \infty}$,  a Taylor expansion yields
\begin{flalign} \label{eq:pe_CN}
    \phi(\bx^{n+1}_\eps) - \phi(\bx^{n}_\eps) \,=\, \nabla_{\mathbf{x}} \phi(\bx^{n+1/2}_{\eps}) \cdot (\mathbf{x}_{\eps}^{n+1} - \mathbf{x}_{\eps}^{n}) \,+\, \Delta t^{3} \,\mathcal{O} \left( \left \| \dfrac{\mathbf{x}_{\eps}^{n+1} - \mathbf{x}_{\eps}^{n}}{\Delta t} \right \|^{3} \right)\,.
\end{flalign}
thus
\begin{flalign}
  \label{eq:te_CN}
    \dfrac{\mathcal{E}^{n+1}_{\eps} - \mathcal{E}^{n}_{\eps}}{\Delta t} \,=\, \Delta t^{2}\, \mathcal{O}\left( \left\|\dfrac{\mathbf{x}_{\eps}^{n+1} - \mathbf{x}_{\eps}^{n}}{\Delta t} \right\|^{3} \right)\,.
\end{flalign}
In other words, under our above assumptions and the further reasonable assumption that $\mathbf{x}_{\eps}^{n}$ is bounded, the variations of the total discrete  energy is of order $\Delta t^2$, which endows this scheme with a form of stability for large time simulations and for all  $\eps >0$.

Now, concerning the asymptotic limit of the scheme
\eqref{scheme:CNS} as $\eps$ tends to zero, we have the following result.

\begin{proposition}[Asymptotic behavior $\eps\rightarrow 0$ with
  a fixed $\Delta t$]
  \label{pro:CN}
Let $\phi \in W^{3, \infty}(\mathbb{R}^{2})$, choose a sufficiently small fixed time step $\Delta t$ and a final time $T > 0$.  We set  $N_{T} = \left
   \lfloor T/\Delta t \right \rfloor$. 
Assume that the Crank-Nicolson scheme \eqref{scheme:CNS} defines a numerical approximation
 $(\mathbf{x}^{n}_{\eps}, \mathbf{v}^{n}_{\eps})_{0 \leq n \leq N_{T}}$ satisfying
    \begin{itemize}
        \item[$(i)$] for all $1 \leq n \leq N_{T}$, $\mathbf{x}^{n}_{\eps}$ is uniformly bounded with respect to $\eps > 0$\,;
        \item[$(ii)$] in the limit $\eps \rightarrow 0$, $(\mathbf{x}^{0}_{\eps}, \frac{1}{2} \| \mathbf{v}^{0}_{\eps} \|^{2})$ converges to some $(\mathbf{y}^{0}, g^{0})$\,.
    \end{itemize}
    Then, we have
    \begin{itemize}
      \item for all $1 \leq n \leq N_{T}$, $(\mathbf{x}_{\eps}^{n},
        e^{n}_{\eps})$ converges to $(\mathbf{y}^{n}, g^{n})$, as $\eps \rightarrow 0$ with $e^{n}_{\eps} = \frac{1}{2} \| \mathbf{v}^{n}_{\eps} \|^{2}$  and the limit $(\mathbf{y}^{n}, g^{n})_{1 \leq n \leq N_{T}}$ solves
    \begin{equation} \label{eq:guiding_center_scheme_CN}
        \begin{dcases}
            \dfrac{\mathbf{y}^{n+1} - \mathbf{y}^{n}}{\Delta t} = - \dfrac{\mathbf{E}^{\perp}}{b} (\mathbf{y}^{n+1/2})\,, \\
            \dfrac{g^{n+1} - g^{n}}{\Delta t} = 0\,;
        \end{dcases}
    \end{equation}
   \item  for all $1 \leq n \leq N_{T}$, the total energy $\mathcal{E}^{n}_{\eps} = e^{n}_{\eps} + \phi(\mathbf{x}^{n}_{\eps})$
   converges to $\mathcal{E}^{n}_{gc} := g^{n} + \phi(\mathbf{y}^{n})$ as
   $\eps \rightarrow 0$, which satisfies
    \begin{flalign} \label{eq:te_limit_CN}
        \dfrac{\mathcal{E}^{n+1}_{gc} -  \mathcal{E}^{n}_{gc}}{\Delta t} = \mathcal{O}\left( \Delta t^{2}  \right)\,;
      \end{flalign}
\item defining the discrete  magnetic moment
$\mu^{n}_{\eps}$ as \[
\mu^{n}_{\eps} =
\dfrac{e^{n}_{\eps}}{b(\mathbf{x}_{\eps}^{n})},
\]
$(\mu_\eps^n)_{\eps>0}$ converges to
$\mu^{n}_{gc} := \dfrac{g^{n}}{b(\mathbf{y}^{n})}$ as $\eps \rightarrow
0$ such that
    \begin{flalign} \label{eq:mu_limit_CN}
        \dfrac{\mu^{n+1}_{gc} -  \mu^{n}_{gc}}{\Delta t} \,=\, - g^{0}
        \, \dfrac{\mathbf{E}\cdot\nabla_{\mathbf{y}}^{\perp} b}{b^{3}}
        (\mathbf{y}^{n+1/2}) \,+\,   \mathcal{O}\left( \Delta t^{2} \right)\,,
    \end{flalign}
    where $\mathbf{y}^{n+1/2}$ is defined as $\mathbf{y}^{n+1/2} \,=\, {(\mathbf{y}^{n+1} + \mathbf{y}^{n})}/{2}$.
\end{itemize}
  \end{proposition}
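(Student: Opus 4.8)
The plan is to extract from the scheme itself, before any limit is taken, the key a priori fact that the fast variable stays bounded while its half-step average is of size $\eps$, then to propagate convergence step by step via compactness together with well-posedness of the limiting implicit update, and finally to read off the energy and magnetic moment statements from elementary Taylor expansions. For the a priori bounds, the first line of \eqref{scheme:CNS} is explicit, $\bv^{n+1/2}_\eps = \eps\,(\bx^{n+1}_\eps - \bx^n_\eps)/\Delta t$, so the uniform bound on $(\bx^n_\eps)_{0\le n\le N_T}$ furnished by hypotheses $(i)$--$(ii)$ gives $\bv^{n+1/2}_\eps = \cO(\eps)$, hence $\bv^{n+1}_\eps = -\,\bv^n_\eps + \cO(\eps)$. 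Iterating this over the finitely many indices $n\le N_T$, starting from the bound on $\bv^0_\eps$ provided by $(ii)$, shows $(\bv^n_\eps)$ is bounded uniformly in $\eps$ and $n$, and then $e^{n+1}_\eps - e^n_\eps = \tfrac12(\bv^{n+1}_\eps - \bv^n_\eps)\cdot(\bv^{n+1}_\eps + \bv^n_\eps) = \cO(\eps)$, so $e^n_\eps = e^0_\eps + \cO(\eps) \to g^0$ for every $n$; in particular any limit necessarily satisfies $g^n \equiv g^0$, which is the second line of \eqref{eq:guiding_center_scheme_CN}.

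Next I would prove convergence of the positions by induction on $n$. Eliminating $\bv$ between the two lines of \eqref{scheme:CNS} gives the implicit relation
\[
b(\bx^{n+1/2}_\eps)\,\frac{(\bx^{n+1}_\eps - \bx^n_\eps)^\perp}{\Delta t} \;=\; \bE(\bx^{n+1/2}_\eps) \,+\, \cO(\eps),
\]
the remainder being $\eps\,(\bv^{n+1}_\eps - \bv^n_\eps)/\Delta t = \cO(\eps)$ by the previous paragraph. Assuming $\bx^n_\eps \to \by^n$, I would take an arbitrary subsequence along which the bounded family $\bx^{n+1}_\eps$ converges, say to $\bar\by$, pass to the limit using continuity of $b$ and $\bE$, and note that $\bar\by$ then solves the fixed-point equation $\bar\by = \by^n - \Delta t\,(\bE^\perp/b)\big((\bar\by+\by^n)/2\big)$. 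Since $\bE^\perp/b$ is bounded and Lipschitz (as $\phi\in W^{3,\infty}$ and $b\in W^{1,\infty}$ with $b\ge b_0>0$), this map is a contraction once $\Delta t$ is small, so its fixed point is unique; calling it $\by^{n+1}$ yields the first line of \eqref{eq:guiding_center_scheme_CN}, and uniqueness of the subsequential limit upgrades convergence to the whole family $\bx^{n+1}_\eps \to \by^{n+1}$. Combined with $e^n_\eps \to g^0$, this gives $(\bx^n_\eps, e^n_\eps) \to (\by^n, g^n)$ with $(\by^n,g^n)$ solving \eqref{eq:guiding_center_scheme_CN}.

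The remaining claims are Taylor expansions. Continuity of $\phi$ and of $b$ (with $b\ge b_0$) gives at once $\mathcal{E}^n_\eps \to \mathcal{E}^n_{gc}$ and $\mu^n_\eps \to \mu^n_{gc}$. Since $g^n \equiv g^0$, one has $\mathcal{E}^{n+1}_{gc} - \mathcal{E}^n_{gc} = \phi(\by^{n+1}) - \phi(\by^n)$; expanding to second order about $\by^{n+1/2}$ (legitimate since $\phi\in W^{3,\infty}$ and $\by^{n+1}-\by^n = \cO(\Delta t)$) leaves $\nabla\phi(\by^{n+1/2})\cdot(\by^{n+1}-\by^n) + \cO(\Delta t^3)$, and the first term vanishes because $\by^{n+1}-\by^n$ is collinear with $\bE^\perp(\by^{n+1/2})$, hence orthogonal to $\nabla\phi(\by^{n+1/2}) = -\,\bE(\by^{n+1/2})$; this is \eqref{eq:te_limit_CN}. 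Likewise $\mu^{n+1}_{gc} - \mu^n_{gc} = g^0\,\big(b(\by^{n+1})^{-1} - b(\by^n)^{-1}\big)$, and a Taylor expansion of $1/b$ about $\by^{n+1/2}$ together with $\by^{n+1}-\by^n = -\Delta t\,(\bE^\perp/b)(\by^{n+1/2})$ and the pointwise identity $\nabla b \cdot \bE^\perp = -\,\bE\cdot\nabla_\by^\perp b$ produces exactly \eqref{eq:mu_limit_CN}.

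The main obstacle is the inductive convergence step: the a priori estimate $\bv^{n+1/2}_\eps = \cO(\eps)$, which is precisely what makes the fast gyration invisible at the discrete level, together with the identification of the limit through contractivity, for small $\Delta t$, of the implicit limiting update --- this is also where the assumption that $\Delta t$ be ``sufficiently small'' enters. Once these are in place, the energy and magnetic moment parts follow by routine computation.
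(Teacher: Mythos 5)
Your proof is correct and follows essentially the same route as the paper's: uniform bounds on $\bv^{n+1/2}_\eps/\eps$ and $\bv^n_\eps$ extracted from the first equation of \eqref{scheme:CNS}, identification of the limit by compactness plus uniqueness of the solution of the implicit limiting update (where smallness of $\Delta t$ enters), and Taylor expansions combined with $\bE\cdot\bE^\perp=0$ and $\nabla b\cdot\bE^\perp=-\bE\cdot\nabla^\perp b$ for \eqref{eq:te_limit_CN} and \eqref{eq:mu_limit_CN}. The only cosmetic differences are that you organize the convergence as an induction on $n$ with an explicit contraction argument and eliminate $\bv$ to get a closed relation on positions, whereas the paper first passes to the limit in $\bv^{n+1/2}_\eps/\eps$; the content is the same.
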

  \begin{proof}
From our first assumption and the first equation of \eqref{scheme:CNS}, we derive that each $(\eps^{-1} \mathbf{v}^{n+1/2}_{\eps})_{\eps > 0}$ is uniformly bounded with respect to $\eps$. By taking the limit $\eps\to 0$ in the triangle inequality $|\|\mathbf{v}^{n+1}_{\eps}\|-\|\mathbf{v}^{n}_{\eps}\||\leq 2\|\mathbf{v}^{n+1/2}_{\eps}\|$, one then deduces the convergence of $e^{n}_{\eps}$ and the second equation of \eqref{eq:guiding_center_scheme_CN}, thus also a uniform bound on $(\mathbf{v}^{n}_{\eps})_{\eps > 0}$. 

Now,  let us extract a subsequence still abusively labeled by $\eps$
                      such that $\mathbf{x}^{n}_{\eps}$
                      converges to some $\mathbf{y}^{n}$ as $\eps$
                      goes to zero.
By using the derived boundedness one may then take first a limit in the second equation  of \eqref{scheme:CNS} to obtain 
\[
\lim_{\eps\rightarrow 0}\dfrac{\mathbf{v}^{n+1/2}_{\eps}}{\eps} \,=\, -\frac{1}{b
              (\mathbf{y}^{n+1/2})}  
              \mathbf{E}^\perp(\mathbf{y}^{n+1/2})\,.
\]
Then, take a limit in the first equation of \eqref{scheme:CNS}  to
conclude the derivation of \eqref{eq:guiding_center_scheme_CN}. The
latter uniquely characterizes the limit of the subsequence, thereby
implying full convergence. At this stage, we use the smallness of $\Delta t$ (independent of $T$ and $\eps$)  to guarantee that the implicit scheme \eqref{eq:guiding_center_scheme_CN} is indeed solvable.

        Let us now turn to  the evolution of the total energy
        $\cE_\eps^n$ and the magnetic moment $\mu_\eps^n$.  For
        any $0 \leq n \leq N_{T}-1$, the convergence of
        $(\mathbf{x}^{n}_{\eps}, e^{n}_{\eps})$ to
        $(\mathbf{y}^{n},g^{n})$ as $\eps$ goes to zero implies the convergence of
        $(\cE_\eps^n,\mu_\eps^n)$ to $(\mathcal{E}_{gc}^{n},\mu_{gc}^{n})$. The total energy of the limiting system $\mathcal{E}_{gc}^{n}$ satisfies
       \[
        		\dfrac{\mathcal{E}_{gc}^{n+1} - \mathcal{E}_{gc}^{n}}{\Delta t} \, =\, \dfrac{g^{n+1} - g^{n}}{\Delta t} \,+\, \dfrac{\phi(\mathbf{y}^{n+1}) - \phi(\mathbf{y}^{n})}{\Delta t}
		\,=\, -\nabla_{\mathbf{y}} \phi(\mathbf{y}^{n+1/2}) \cdot
                \dfrac{\left( \mathbf{y}^{n+1} - \mathbf{y}^{n}
                  \right)}{\Delta t} \,+\, \mathcal{O} \left( \Delta
                  t^{2} \right)\,=\,\mathcal{O} \left( \Delta
                  t^{2} \right)\,,
\]
as deduced from a Taylor expansion and the insertion of $\mathbf{E} (\mathbf{y}^{n+1/2}) = - \nabla_{\mathbf{y}}
        \phi(\mathbf{y}^{n+1/2})$ in the first equation of \eqref{eq:guiding_center_scheme_CN}. 
	 Similarly, the evolution of the discrete  magnetic moment
        $\mu_{gc}^{n}$ obeys for all $0 \leq n \leq N_{T}-1$,
\begin{flalign*} 
	\dfrac{\mu_{gc}^{n+1} - \mu_{gc}^{n}}{\Delta t} & =\,
        \dfrac{1}{\Delta t} \left( \dfrac{g^{n+1}}{b(\mathbf{y}^{n+1})} -
          \dfrac{g^{n}}{b(\mathbf{y}^{n})} \right) \\[0.9em]
        & =\,\,-\,  g^{0}
        \,\dfrac{\nabla_{\mathbf{y}} b}{b^{2}} (\mathbf{y}^{n+1/2}) \cdot
        \dfrac{(\mathbf{y}^{n+1} - \mathbf{y}^{n})}{\Delta t}  \,+\,
        \mathcal{O}\left( \Delta t^{2}  \right)
        \\[0.9em]
	& =\, - g^{0} \,\dfrac{\mathbf{E}\cdot\nabla_{\mathbf{y}}^{\perp} b}{b^{3}} (\mathbf{y}^{n+1/2}) + \mathcal{O}\left( \Delta t^{2} \right)\,.
\end{flalign*}   
  \end{proof}

It is worth mentioning that  Proposition \ref{pro:CN}, clearly indicates that as $\eps$ goes to zero,  the discrete guiding center system
 \eqref{eq:guiding_center_scheme_CN} obtained by passing to the limit
 in the \textit{Crank-Nicolson scheme} is not consistent with the
 continuous system \eqref{eq:guiding_center_system}. Indeed,
 it fails to capture the correct drift $\nabla_{\mathbf{y}}^{\perp}
 b/b^{2}$ for both  position $\mathbf{y}$ and kinetic energy
 $g$. Similarly, the evolution of the magnetic moment $(\mu_{gc}^{n})_{n \in
   \mathbb{N}}$ derived from solution $(\mathbf{y}^{n}, g^{n})_{n \in
   \mathbb{N}}$ is not consistent with the continuous evolution. Even
 if the \textit{Crank-Nicolson scheme} provides a second order in time
 variation of the total energy  uniformly with respect to $\eps > 0$
 as indicated by \eqref{eq:te_CN} and \eqref{eq:te_limit_CN}, the
 variations of the discrete energy and discrete potential energy are
 not consistent.

Thus, several works have been devoted to modifications of the
Crank-Nicolson scheme for \eqref{eq:ODE_system}  to obtain a result of uniform consistency with
respect to $\eps$.  For instance, we mention  the work of  Brackbill,
Forslund and Vu \cite{BrFo85, VuBr95} who first introduced an effective force into the equation
on $\bv$ in order to capture the $\nabla_{\mathbf{x}}^{\perp} b/b^{2}$
term in the limit $\eps \rightarrow 0$. We also refer to the recent
work of Ricketson and Chac\'on \cite{RiCh20}, who proposed an
alternative approach, which is  expected to conserve energy within the
Particle-In-Cell framework.

%----------------------------------------------------------------------------------------------------%
%                     BRACKBILL-FORSLUND-VU SCHEME								 				                                                                  %
%----------------------------------------------------------------------------------------------------%

\subsection{The Brackbill-Forslund-Vu scheme}
\label{section:Brackbill}
The scheme developed by Brackbill, Forslund and Vu in \cite{BrFo85,
  VuBr95} incorporates an effective force into the second equation of the  \textit{Crank-Nicolson scheme}
\eqref{scheme:CNS}. This additional force is designed to capture the
correct drift
$\nabla_{\mathbf{x}}^{\perp} b/b^{2}$ when $\eps \rightarrow 0$. Obviously in the regime $\Delta t \ll \eps$, this force is expected to
be significantly small, actually of order $\cO (\Delta t^2/\eps^4)$, see
\cite{BrFo85} for instance.  More precisely, for a given time step
$\Delta t > 0$, we define  $t^{n} = n \Delta t$, for $n \in
\mathbb{N}$ and  $(\bx^n_\eps, \bv^n_\eps)$, an approximation of the
solution $(\bx_\eps, \bv_\eps)$ to \eqref{eq:ODE_system} at time $t^n$,
through
\begin{equation}
\label{scheme:Brackbill}
    \begin{dcases}
        \eps\, \dfrac{\bx^{n+1}_\eps - \bx^n_\eps}{\Delta t} \, = \, \bv^{n+1/2}_\eps\,, \\[0.9em]
        \eps\, \dfrac{\bv^{n+1}_\eps - \bv^n_\eps}{\Delta t} \, = \,
        \bE (\bx^{n+1/2}_\eps)   \,+ \,  \bF^{n+1/2}_{\rm eff} - b(\bx^{n+1/2}_\eps) \dfrac{(\bv^{n+1/2}_\eps)^{\perp}}{\eps}\,, \\[0.9em]
        \bx^{0}_\eps = \bx(0),\, \quad \bv^{0}_\eps = \bv(0)\,,
    \end{dcases}
\end{equation}
where the effective force $\mathbf{F}_{\rm eff}$ is given by
\begin{flalign} \label{eq:F_eff}
    \bF^{n+1/2}_{\rm eff} \,:=\,- \eta^{n+1/2} \dfrac{\nabla_{\bx} b}{b}(\bx^{n+1/2}_\eps)\,, \quad \eta^{n+1/2} \,=\, \dfrac{1}{2} \left( \dfrac{\| \bv^{n+1}_\eps \|^{2} \, +\, \| \bv^{n}_{\eps} \|^{2}}{2} - \| \bv^{n+1/2}_\eps \|^{2} \right)\,,
\end{flalign}
and  we again use notation 
\[
    \mathbf{v}^{n+1/2}_{\eps} \,=\, \dfrac{\bv^{n+1}_\eps + \bv^{n}_\eps}{2}, \qquad \bx^{n+1/2}_\eps \,=\, \dfrac{\bx^{n+1}_\eps + \bx^{n}_\eps}{2} \,.
\]
We now define the discrete kinetic energy $e^n_\eps =
\frac{1}{2} \| \bv^n_\eps \|^2$ and total energy  as
\[
	\cE^n_\eps \,:=\, e^n_\eps \,+\, \phi(\bx^n_\eps)\,,
\]
in which $\phi$ is a given smooth function $\phi \in W^{3,
  \infty}(\mathbb{R}^{2})$.  Hence, we  obtain the variation of the
discrete kinetic energy by multiplying the second equation of
\eqref{scheme:Brackbill} by $\bv^{n+1/2}_\eps$, which gives  
\begin{flalign} \label{eq:Brackbill_ke}
	\dfrac{e^{n+1}_\eps - e^n_\eps}{\Delta t} \, = \, \bE
        (\bx_\eps^{n+1/2}) \cdot \dfrac{\bv^{n+1/2}_\eps}{\eps} +
        \bF^{n+1/2}_{\rm eff} \cdot \dfrac{\bv^{n+1/2}_\eps}{\eps}\,.
\end{flalign}
Then, applying a Taylor expansion to the potential $\phi$, it yields
that 
\begin{flalign}
  \label{eq:Brackbill_energy}
    \nonumber
    \dfrac{\cE^{n+1}_\eps \, - \, \cE^{n}_\eps}{\Delta t} & \, =\,  \dfrac{e^{n+1}_\eps - e^n_\eps}{\Delta t} \, +\, \dfrac{\phi_\eps^{n+1} - \phi_\eps^n}{\Delta t}\,, \\
    & \, = \, \bF^{n+1/2}_{\rm eff} \cdot \dfrac{\bx^{n+1}_\eps - \bx^n_\eps}{\Delta t} \, +\, \Delta t^{2} \,\cO \left(  \left\| \dfrac{\bx^{n+1}_\eps - \bx^n_\eps}{\Delta t} \right\|^{3} \right)\,.
  \end{flalign}
  Using the definition of the effective force $\bF^{n+1/2}_{\rm eff}$,
  we have
  \[
|\eta^{n+1/2}| \,=\, \frac{1}{8} \| \bv^{n+1}- \bv^n\|^2 \,=\,  \left(\frac{\Delta t}{\eps}\right)^2 \,\cO \left(  \left\| \dfrac{\bx^{n+1}_\eps - \bx^n_\eps}{\Delta t}\right\| \right).  
  \]
Then under our above assumptions and the further reasonable assumption
that $\mathbf{x}_{\eps}^{n}$ is bounded,  the evolution of the discrete
energy obtained by \eqref{scheme:Brackbill}  is much worse than the
one \eqref{eq:te_CN} corresponding to the Crank-Nicolson
scheme. Now, let us study the asymptotic behavior of \eqref{scheme:Brackbill} as $\eps$ tends to zero.

\begin{proposition}[Asymptotic behavior $\eps\rightarrow 0$ with a fixed $\Delta t$]
  \label{pro:Brackbill_scheme}
Let $\phi \in W^{3, \infty}(\mathbb{R}^{2})$, choose a sufficiently small fixed time step $\Delta t$ and a final time $T > 0$.  We set  $N_{T} = \left
   \lfloor T/\Delta t \right \rfloor$. 
Assume that the modified Crank-Nicolson scheme \eqref{scheme:Brackbill} defines a numerical approximation
 $(\mathbf{x}^{n}_{\eps}, \mathbf{v}^{n}_{\eps})_{0 \leq n \leq N_{T}}$ satisfying
    \begin{itemize}
        \item[$(i)$] for all $1 \leq n \leq N_{T}$, $\mathbf{x}^{n}_{\eps}$ is uniformly bounded with respect to $\eps > 0$\,;
        \item[$(ii)$] in the limit $\eps \rightarrow 0$, $(\mathbf{x}^{0}_{\eps}, \frac{1}{2} \| \mathbf{v}^{0}_{\eps} \|^{2})$ converges to some $(\mathbf{y}^{0}, g^{0})$\,.
    \end{itemize}  
Then,  we have
\begin{itemize}
\item    for all $1 \leq n \leq N_{T}$, $(\mathbf{x}_{\eps}^{n},
  e^{n}_{\eps})$ converges to $(\mathbf{y}^{n}, g^{n})$, as $\eps \rightarrow 0$ with $e^{n}_{\eps} = \frac{1}{2} \| \mathbf{v}^{n}_{\eps} \|^{2}$  and the limit $(\mathbf{y}^{n}, g^{n})_{1 \leq n \leq N_{T}}$ solves
    \begin{equation} \label{eq:guiding_center_Brackbill}
        \begin{dcases}
            \dfrac{\mathbf{y}^{n+1} - \mathbf{y}^{n}}{\Delta t} \,=\, - \dfrac{\mathbf{E}^{\perp}}{b}(\mathbf{y}^{n+1/2}) \,+\, g^{0} \dfrac{\nabla_{\mathbf{y}}^{\perp} b}{b^{2}} (\mathbf{y}^{n+1/2}), \\
            \dfrac{g^{n+1} - g^{n}}{\Delta t} = 0.
        \end{dcases}
    \end{equation}
\item  for all $1 \leq n \leq N_{T}$, the total energy
  $\mathcal{E}^{n}_{\eps} = e^{n}_{\eps} + \phi(\mathbf{x}^{n}_{\eps})$
  converges to $\mathcal{E}^{n}_{gc} := g^{n} + \phi(\mathbf{y}^{n})$ as
  $\eps \rightarrow 0$, which satisfies
  \begin{flalign}
    \label{eq:te_limit_Brackbill}
        \dfrac{\mathcal{E}^{n+1}_{gc} -  \mathcal{E}^{n}_{gc}}{\Delta
          t} \,=\, -g^{0} \dfrac{\mathbf{E}\cdot\nabla_{\mathbf{y}}^{\perp}
          b}{b^{2}} (\mathbf{y}^{n+1/2}) + \mathcal{O}\left( \Delta t^{2}
        \right)\,;
      \end{flalign}
\item  defining the discrete  magnetic moment $\mu^{n}_{\eps} =
  \dfrac{e^{n}_{\eps}}{b(\mathbf{x}_{\eps}^{n})}$ converges to
  $\mu^{n}_{gc} := \dfrac{g^{n}}{b(\mathbf{y}^{n})}$ as $\eps \rightarrow
  0$ such that
    \begin{flalign} \label{eq:mu_limit_Brackbill}
        \dfrac{\mu^{n+1}_{gc} -  \mu^{n}_{gc}}{\Delta t} \,=\, - g^{0} \dfrac{\mathbf{E}\cdot\nabla_{\mathbf{y}}^{\perp} b}{b^{3}} (\mathbf{y}^{n+1/2}) \,+\,  \mathcal{O}\left( \Delta t^{2} \right).
    \end{flalign}
    where $\mathbf{y}^{n+1/2}$ is defined as $\mathbf{y}^{n+1/2} \,=\, {(\mathbf{y}^{n+1} \,+\, \mathbf{y}^{n})}/{2}$.
\end{itemize}
  \end{proposition}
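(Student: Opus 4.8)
The plan is to adapt, almost verbatim, the proof of Proposition~\ref{pro:CN}, the only genuine novelty being the treatment of the effective force $\bF^{n+1/2}_{\rm eff}$. First I would reproduce the \emph{a priori} analysis. From hypothesis~$(i)$ and the first equation of \eqref{scheme:Brackbill}, $\bv^{n+1/2}_\eps=\eps\,(\bx^{n+1}_\eps-\bx^n_\eps)/\Delta t$ is of size $\cO(\eps)$; since $\bv^{n+1}_\eps=2\,\bv^{n+1/2}_\eps-\bv^n_\eps$, this gives $\bv^{n+1}_\eps=-\bv^n_\eps+\cO(\eps)$ and, in particular, $\big|\,\|\bv^{n+1}_\eps\|-\|\bv^n_\eps\|\,\big|\le 2\,\|\bv^{n+1/2}_\eps\|\to0$. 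Starting from hypothesis~$(ii)$ and arguing by induction on $n\le N_T$, this yields a uniform-in-$\eps$ bound on $(\bv^n_\eps)_{\eps>0}$, the convergence $e^n_\eps\to g^n$, and the relation $g^{n+1}=g^n$, that is, $g^n=g^0$ for every $n$; this is already the second equation of \eqref{eq:guiding_center_Brackbill}.

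The key new step is to identify the limit of the effective force. Inserting $\bv^{n+1}_\eps-\bv^n_\eps=2\,(\bv^{n+1/2}_\eps-\bv^n_\eps)$ together with $\bv^{n+1/2}_\eps=\cO(\eps)$ and the uniform bound on $\bv^n_\eps$ into the identity $\eta^{n+1/2}_\eps=\tfrac18\,\|\bv^{n+1}_\eps-\bv^n_\eps\|^2$ (which follows from \eqref{eq:F_eff} by expanding the square), one finds $\eta^{n+1/2}_\eps=e^n_\eps+\cO(\eps)\to g^n=g^0$. Hence, along any subsequence for which $\bx^n_\eps\to\by^n$, one has $\bF^{n+1/2}_{\rm eff}\to -\,g^0\,(\nabla_\by b/b)(\by^{n+1/2})$. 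Then I would pass to the limit in the second equation of \eqref{scheme:Brackbill}: its left-hand side is $\cO(\eps)$ and $b\ge b_0>0$, so
\[
\lim_{\eps\to0}\frac{\bv^{n+1/2}_\eps}{\eps}\,=\,-\,\frac{\bE^\perp}{b}(\by^{n+1/2})\,+\,g^0\,\frac{\nabla_\by^\perp b}{b^2}(\by^{n+1/2}),
\]
and inserting this into the first equation of \eqref{scheme:Brackbill} produces the first equation of \eqref{eq:guiding_center_Brackbill}. Exactly as in Proposition~\ref{pro:CN}, taking $\Delta t$ small enough (independently of $\eps$ and $T$) makes this implicit one-step map well defined, so the limit is uniquely characterized and the full family --- not merely a subsequence --- converges.

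Finally, the convergence of $(\bx^n_\eps,e^n_\eps)$ to $(\by^n,g^n)$ immediately gives $\cE^n_\eps\to\cE^n_{gc}$ and $\mu^n_\eps\to\mu^n_{gc}$; to handle their discrete variations I would combine $g^{n+1}=g^n$, a second-order midpoint Taylor expansion of $\phi$, respectively of $1/b$, at $\by^{n+1/2}$, the first equation of \eqref{eq:guiding_center_Brackbill}, and the elementary identities $\bE\cdot\bE^\perp=0$, $\nabla_\by b\cdot\nabla_\by^\perp b=0$ and $\mathbf u\cdot\mathbf w^\perp=-\,\mathbf u^\perp\cdot\mathbf w$, which deliver \eqref{eq:te_limit_Brackbill} and \eqref{eq:mu_limit_Brackbill}. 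The one delicate point, and the reason this is not a verbatim copy of the proof of Proposition~\ref{pro:CN}, is the second step: because $\bv^{n+1}_\eps-\bv^n_\eps$ does \emph{not} vanish as $\eps\to0$, the effective force is $\cO(1)$ and a priori could enter the leading-order balance in the velocity equation; it is the identity $\bv^{n+1}_\eps+\bv^n_\eps=\cO(\eps)$, forced by the bounded displacement and the centered structure of the scheme, that decouples this feedback and pins $\eta^{n+1/2}_\eps$ down to $g^0$ --- precisely the value needed to recover the missing $\nabla_\by^\perp b/b^2$ drift in \eqref{eq:guiding_center_Brackbill}.
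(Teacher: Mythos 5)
Your proposal is correct and follows essentially the same route as the paper: reuse the a priori bounds from the Crank--Nicolson case (first equation only), show $\eta^{n+1/2}\to g^0$ so that $\bF^{n+1/2}_{\rm eff}\to -g^0\,(\nabla_{\by}b/b)(\by^{n+1/2})$, pass to the limit in the velocity equation, upgrade subsequential to full convergence via uniqueness of the limit scheme, and finish with midpoint Taylor expansions for $\cE_{gc}$ and $\mu_{gc}$. The only cosmetic difference is that you reach $\eta^{n+1/2}\to g^0$ through the identity $\eta^{n+1/2}=\tfrac18\|\bv^{n+1}_\eps-\bv^n_\eps\|^2$ and $\bv^{n+1}_\eps+\bv^n_\eps=\cO(\eps)$, whereas the paper deduces it directly from the convergence of $e^n_\eps$ and $\bv^{n+1/2}_\eps\to 0$ in the defining formula \eqref{eq:F_eff}; both are one-line computations of the same limit.
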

  \begin{proof}
The beginning of the proof of Proposition \ref{pro:CN} applies word by word to the present case since it only uses the first equation of the scheme. In this way one arrives at a stage where one knows that $(\eps^{-1} \mathbf{v}^{n+1/2}_{\eps})_{\eps > 0}$ is uniformly bounded with respect to $\eps$ and a subsequence $(\mathbf{x}^{n}_{\eps}, e^{n}_{\eps})$ converges to some $(\mathbf{y}^{n},g^{n})$ as $\eps$ goes to zero, satisfying  the second equation of \eqref{eq:guiding_center_Brackbill}.

Note that this also implies
\[
\lim_{\eps \rightarrow 0} \eta^{n+1/2} = g^{0}
\]
so that when taking the limit $\eps\to0$ in the second equation of \eqref{scheme:Brackbill} one receives
        \begin{equation} 
            \nonumber
            \lim_{\eps \rightarrow 0}  \frac{\mathbf{v}^{n+1/2}_{\eps}}{\eps} \,=\, - \dfrac{ \mathbf{E}^{\perp}}{b}(\mathbf{y}^{n+1/2}) + g^{0} \dfrac{\nabla_{\mathbf{y}}^{\perp} b}{b^{2}}(\mathbf{y}^{n+1/2}).
        \end{equation}
This is sufficient to complete the derivation of \eqref{eq:guiding_center_Brackbill}. As in the proof of Proposition~\ref{pro:CN}, we conclude the full convergence (and not only the convergence of a subsequence) from the fact that \eqref{eq:guiding_center_Brackbill} defines a unique solution.

We then
        proceed as in the proof of Proposition~\ref{pro:CN}  for the
        evolution of  the total energy
        $\cE_\eps^n$ and the magnetic moment $\mu_\eps^n$, and derive
\begin{flalign} 
	\nonumber
	\dfrac{\mathcal{E}_{gc}^{n+1} - \mathcal{E}_{gc}^{n}}{\Delta t} %& = \dfrac{g^{n+1} - g^{n}}{\Delta t} + \dfrac{\phi(\mathbf{y}^{n+1}) - \phi(\mathbf{y}^{n})}{\Delta t} \\
%	\nonumber
	& = \nabla_{\mathbf{y}} \phi(\mathbf{y}^{n+1/2}) \cdot \left( \dfrac{\mathbf{y}^{n+1} - \mathbf{y}^{n}}{\Delta t} \right) + \mathcal{O}\left( \Delta t^{2} \right) %\\
	%\nonumber
	%& 
	= -g^{0} \dfrac{\mathbf{E}\cdot\nabla_{\mathbf{y}}^{\perp} b}{b^{2}} (\mathbf{y}^{n+1/2}) +  \mathcal{O}\left( \Delta t^{2} \right)
\end{flalign}
and 
\begin{flalign} 
	\nonumber
	\dfrac{\mu_{gc}^{n+1} - \mu_{gc}^{n}}{\Delta t} 
	& = -  g^{0} \dfrac{(\mathbf{y}^{n+1} - \mathbf{y}^{n})}{\Delta t}\cdot\dfrac{\nabla_{\mathbf{y}} b}{b^{2}} (\mathbf{y}^{n+1/2}) + \mathcal{O}\left( \Delta t^{2} \right) %\\
%	\nonumber
	%& 
	= - g^{0}\dfrac{ \mathbf{E}\cdot\nabla_{\mathbf{y}}^{\perp} b}{b^{3}} (\mathbf{y}^{n+1/2}) + \mathcal{O}\left( \Delta t^{2} \right).
\end{flalign}   
    \end{proof}

Here it is worth mentioning that  Proposition
\ref{pro:Brackbill_scheme} shows that the scheme with the effective force
\eqref{scheme:Brackbill}  does not give a consistent
    approximation of  slow variables $(\mathbf{x}_{\eps}, e_{\eps})$ in
    the limit $\eps \rightarrow 0$. Indeed, in the limit
    $\eps\rightarrow 0$,  the scheme
    \eqref{scheme:Brackbill} exactly preserves the kinetic energy over
    time while this quantity should vary according to the gradient
    of the magnetic field (grad $\bB$ drift). Therefore, neither the
    discrete guiding center variable $\by^n$ nor the kinetic energy
    $g^n$ are consistent approximation  of   the guiding center system
    \eqref{eq:guiding_center_system}. As a consequence, the evolution of the discrete
    magnetic moment $\mu_{gc}$ is also not consistent with  the
    continuous equation \eqref{adiabatic_conserve}.    Furthermore, the scheme
    \eqref{scheme:Brackbill} fails to preserve the second order
    accuracy with respect to $\Delta t$ of the total  energy
    $\mathcal{E}_{gc}$, in contrast to  the Crank-Nicolson scheme
    \eqref{scheme:CNS}.

    In order to overcome this drawback, an alternative numerical
    scheme, still based on the Crank-Nicolson method  has been
    proposed recently by Ricketson and Chac\'on \cite{RiCh20}.

% -----------------------------------------------------------------------------------------------------------------%
% 								RICKETSON-CHACON SCHEME												    %
%------------------------------------------------------------------------------------------------------------------%

\subsection{The Ricketson-Chac\'on  scheme}
\label{section:Chacon}

The numerical scheme proposed by L. F. Ricketson and L. Chac\'on
\cite{RiCh20}  still consists in adding a force term to capture the
$\nabla_{\mathbf{x}}^{\perp} b/b^{2}$  drift  in the asymptotic
limit $\eps \rightarrow 0$. More precisely, the force
$\mathbf{F}^{n+1/2}_{\rm eff}$ is chosen to be orthogonal to
the velocity $\mathbf{v}^{n+1/2}_{\eps}$, so that it does not interfere with the evolution
of the discrete kinetic energy.  Explicitly as in \cite{RiCh20}, set
\begin{flalign} \label{eq:F_cons}
    \mathbf{F}^{n+1/2}_{\rm cons} = \left( {\rm Id} - \dfrac{\mathbf{v}_{\eps}^{n+1/2}\otimes\mathbf{v}_{\eps}^{n+1/2}}{\| \mathbf{v}_{\eps}^{n+1/2} \|^{2}} \right) \,\mathbf{G}^{n+1/2}\,,
\end{flalign}
where $\mathbf{G}^{n+1/2}$ is given by 
\begin{flalign} \label{eq:G}
    \mathbf{G}^{n+1/2} = 
    \begin{dcases}
        2 \,\mathbf{F}^{n+1/2}_{\rm eff} \quad \textrm{if }\quad
        \|\mathbf{v}^{n+1/2}_{\eps} - \mathbf{v}^{n+1/2}_{\mathbf{E},\eps} \|\,\geq\,\| \mathbf{v}^{n+1/2}_{\mathbf{E},\eps}\|, \\[0.9em]
     \left( \dfrac{2}{\beta^{n+1/2}_\eps} \hat{\mathbf{v}}^{n+1/2}_{\mathbf{E},\eps}\otimes
          \hat{\mathbf{v}}^{n+1/2}_{\mathbf{E},\eps} \,+\, \dfrac{{\rm Id} \,-\,
            \hat{\mathbf{v}}^{n+1/2}_{\mathbf{E},\eps}\otimes \hat{\mathbf{v}}^{n+1/2}_{\mathbf{E},\eps}}{1 -
            \frac{\beta^{n+1/2}_\eps}{2}} \right)\mathbf{F}^{n+1/2}_{\rm eff} \quad 
            \textrm{ otherwise}
%\textrm{if }\quad
% \|\mathbf{v}^{n+1/2}_{\eps} - \mathbf{v}^{n+1/2}_{\mathbf{E},\eps} \|\,<\,\| \mathbf{v}^{n+1/2}_{\mathbf{E},\eps}\|,
    \end{dcases}
\end{flalign}
with  
\begin{align*}
\bv^{n+1/2}_{\bE,\eps}&\,=\, - \dfrac{\bE^\perp}{b}(\bx^{n+1/2}_{\eps})\,,&
\hat{\mathbf{v}}^{n+1/2}_{\mathbf{E},\eps}&\,=\, \dfrac{\mathbf{v}^{n+1/2}_{\mathbf{E},\eps}}{\|\mathbf{v}^{n+1/2}_{\mathbf{E},\eps}\|}\,,&
\beta^{n+1/2}_\eps&\,=\,\frac{\|\mathbf{v}^{n+1/2}_{\eps} - \mathbf{v}^{n+1/2}_{\mathbf{E},\eps} \|^2}{\| \mathbf{v}^{n+1/2}_{\mathbf{E},\eps}\|^2}\,,
\end{align*}
whereas the effective force  $\mathbf{F}^{n+1/2}_{\rm eff}$ is given in \eqref{eq:F_eff} and ${\rm Id}$ is the identity matrix.  Then, the modified Crank-Nicolson scheme  now becomes \cite{RiCh20} 
\begin{equation}
  \label{scheme:Chacon}
    \begin{dcases}
        \eps \dfrac{\mathbf{x}^{n+1}_{\eps} - \mathbf{x}^{n}_{\eps}}{\Delta t} \,=\, \mathbf{v}^{n+1/2}_{\eps}, \\
        \eps \dfrac{\mathbf{v}^{n+1}_{\eps} - \mathbf{v}^{n}_{\eps}}{\Delta
          t} \,=\, \mathbf{E}(\mathbf{x}^{n+1/2}_{\eps}) \,+\,
        \mathbf{F}^{n+1/2}_{\rm cons} \,-\,  b(\mathbf{x}^{n+1/2}_{\eps}) \,\dfrac{(\mathbf{v}^{n+1/2}_{\eps})^{\perp}}{\eps}, \\
        \mathbf{x}^{0}_{\eps} \,=\, \mathbf{x}(0), \quad \mathbf{v}^{0}_{\eps} = \mathbf{v}(0).
    \end{dcases}
\end{equation}
where again
\begin{equation*}
    \nonumber
    \mathbf{v}^{n+1/2}_{\eps} = \dfrac{\mathbf{v}^{n+1}_{\eps} + \mathbf{v}^{n}_{\eps}}{2}, \qquad\qquad \mathbf{x}^{n+1/2}_{\eps} = \dfrac{\mathbf{x}^{n+1}_{\eps} + \mathbf{x}^{n}_{\eps}}{2}.
  \end{equation*}
As we did previously, we define the  kinetic energy $e^{n}_{\eps} = \frac{1}{2}
\| \mathbf{v}^{n}_{\eps} \|^{2}$ and the discrete  total energy as
$\mathcal{E}^{n}_{\eps} = e^{n}_{\eps} + \phi(\mathbf{x}_{\eps}^{n})$ for
a given potential charge $\phi \in W^{3,
  \infty}(\mathbb{R}^{2})$. Using  that $\mathbf{F}^{n+1/2}_{\rm cons} $
is orthogonal to $\mathbf{v}^{n+1/2}_{\eps}$ thus to $\mathbf{x}^{n+1}_{\eps}-\mathbf{x}^{n}_{\eps}$, we recover the same evolution of the discrete total energy as the one for the Crank-Nicolson scheme:  for all $\eps > 0$,
\begin{flalign} \label{eq:te_Chacon}
\dfrac{\mathcal{E}^{n+1}_{\eps} - \mathcal{E}^{n}_{\eps}}{\Delta t} 
\,=\, \Delta t^{2} \,\mathcal{O}\left( \left\|\frac{\mathbf{x}^{n+1}_{\eps}-\mathbf{x}^{n}_{\eps}}{\Delta t}\right\|^{3} \right)\,.
\end{flalign}

Note that strictly speaking, because of \eqref{eq:F_cons}, when
$\mathbf{v}^{n+1/2}_{\eps}$ is zero an alternative for
$\mathbf{F}^{n+1/2}_{\rm cons}$ should be used.  Now, let us investigate the  asymptotic limit of
the scheme \eqref{scheme:Chacon} when $\eps \rightarrow 0$ with a fixed $\Delta t$.
\begin{proposition}[Asymptotic behavior $\eps\rightarrow 0$ with a fixed $\Delta t$]
\label{pro:Chacon_scheme}
Let $\phi \in W^{3, \infty}(\mathbb{R}^{2})$ such that $\nabla\phi$ is nowhere vanishing, choose a sufficiently small fixed time step $\Delta t$ and a final time $T > 0$.  We set  $N_{T} = \left
   \lfloor T/\Delta t \right \rfloor$. 
Assume that the modified Crank-Nicolson scheme \eqref{scheme:Chacon} defines a numerical approximation
 $(\mathbf{x}^{n}_{\eps}, \mathbf{v}^{n}_{\eps})_{0 \leq n \leq N_{T}}$ satisfying
    \begin{itemize}
        \item[$(i)$] for all $1 \leq n \leq N_{T}$, $\mathbf{x}^{n}_{\eps}$ is uniformly bounded with respect to $\eps > 0$\,;
        \item[$(ii)$] in the limit $\eps \rightarrow 0$, $(\mathbf{x}^{0}_{\eps}, \frac{1}{2} \| \mathbf{v}^{0}_{\eps} \|^{2})$ converges to some $(\mathbf{y}^{0}, g^{0})$\,.
    \end{itemize}  
    Then,  we have
    \begin{itemize}
\item    for all $1 \leq n \leq N_{T}$, $(\mathbf{x}_{\eps}^{n},
  e^{n}_{\eps})$ converges to $(\mathbf{y}^{n}, g^{n})$, as $\eps \rightarrow 0$ with $e^{n}_{\eps} = \frac{1}{2} \| \mathbf{v}^{n}_{\eps} \|^{2}$  and the limit $(\mathbf{y}^{n}, g^{n})_{1 \leq n \leq N_{T}}$ solves
    \begin{equation} 
    \label{eq:guiding_center_Chacon}
         \begin{dcases}
            \dfrac{\mathbf{y}^{n+1} - \mathbf{y}^{n}}{\Delta t} \,=\, -
            \dfrac{\mathbf{E}^{\perp}}{b}(\mathbf{y}^{n+1/2}) \,+\, 2\,
            g^{0} \,\left( \dfrac{\mathbf{E} \cdot \nabla_{\mathbf{y}} b}{b^{2} \,\| \mathbf{E} \|^{2}}\right)\, \mathbf{E}^{\perp}  (\mathbf{y}^{n+1/2}), \\
            \dfrac{g^{n+1} - g^{n}}{\Delta t} \,=\, 0.
        \end{dcases}
      \end{equation}
     \item  for all $1 \leq n \leq N_{T}$, the total energy
  $\mathcal{E}^{n}_{\eps} = e^{n}_{\eps} + \phi(\mathbf{x}^{n}_{\eps})$
  converges to $\mathcal{E}^{n}_{gc} := g^{n} + \phi(\mathbf{y}^{n})$ as
  $\eps \rightarrow 0$, which satisfies
    \begin{flalign} \label{eq:te_limit_Chacon}
        \dfrac{\mathcal{E}^{n+1}_{gc} -  \mathcal{E}^{n}_{gc}}{\Delta
          t} \,=\, \mathcal{O}\left( \Delta t^{2}
        \right),
      \end{flalign}
     \item  defining the discrete  magnetic moment $\mu^{n}_{\eps} =
  \dfrac{e^{n}_{\eps}}{b(\mathbf{x}_{\eps}^{n})}$ converges to
  $\mu^{n}_{gc} := \dfrac{g^{n}}{b(\mathbf{y}^{n})}$ as $\eps \rightarrow
  0$, which satisfies
  \begin{flalign}
    \label{eq:mu_limit_Chacon}
         \dfrac{\mu_{gc}^{n+1} - \mu_{gc}^{n}}{\Delta t} \,=\, - g^{0} \, \dfrac{\mathbf{E}^{\perp} \cdot\nabla_{\mathbf{y}} b}{b^{2}}(\mathbf{y}^{n+1/2}) \,-\, 2 \,\left( g^{0} \right)^{2} \,\dfrac{\left( \mathbf{E}^{\perp} \cdot \nabla_{\mathbf{y}} b \right) \left( \mathbf{E} \cdot \nabla_{\mathbf{y}} b \right)}{b^{4} \| \mathbf{E} \|^{2}} (\mathbf{y}^{n+1/2}) \,+\,  \mathcal{O}\left( \Delta t^{2} \right).
    \end{flalign}
    where $\mathbf{y}^{n+1/2}$ is defined as $\mathbf{y}^{n+1/2} \,=\,{(\mathbf{y}^{n+1} \,+\, \mathbf{y}^{n})}/{2}$.
       \end{itemize}
    \end{proposition}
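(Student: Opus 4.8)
The plan is to follow verbatim the two-step template already used for Propositions~\ref{pro:CN} and~\ref{pro:Brackbill_scheme}: first identify the limiting iteration by a compactness-plus-uniqueness argument at the characteristic level, then transfer the energy and magnetic-moment identities to the limit by Taylor expansion. Since the first equation of \eqref{scheme:Chacon} is the same as for Crank--Nicolson, the opening paragraph of the proof of Proposition~\ref{pro:CN} applies word for word: $(\eps^{-1}\mathbf{v}^{n+1/2}_{\eps})_{\eps>0}$ is uniformly bounded, the triangle inequality $|\,\|\mathbf{v}^{n+1}_{\eps}\|-\|\mathbf{v}^{n}_{\eps}\|\,|\le2\|\mathbf{v}^{n+1/2}_{\eps}\|$ yields the convergence of $e^{n}_{\eps}$ together with $g^{n+1}=g^{n}$ and a uniform bound on $\mathbf{v}^{n}_{\eps}$, and along a subsequence $\mathbf{x}^{n}_{\eps}\to\mathbf{y}^{n}$, hence $\mathbf{x}^{n+1/2}_{\eps}\to\mathbf{y}^{n+1/2}$ and $\mathbf{v}^{n+1/2}_{\eps}/\eps\to\mathbf{w}$ for some $\mathbf{w}$ with $\mathbf{y}^{n+1}-\mathbf{y}^{n}=\Delta t\,\mathbf{w}$. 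In particular $e^{n}_{\eps}\to g^{0}$ for all $n$.

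The core of the argument is the limit of the projector-valued force $\mathbf{F}^{n+1/2}_{\rm cons}$. First, because $\nabla\phi$ is nowhere vanishing and the $\mathbf{x}^{n}_{\eps}$ stay in a fixed compact set, $\mathbf{v}^{n+1/2}_{\mathbf{E},\eps}=-\mathbf{E}^{\perp}/b(\mathbf{x}^{n+1/2}_{\eps})$ converges to the nonzero vector $-\mathbf{E}^{\perp}/b(\mathbf{y}^{n+1/2})$, whereas $\mathbf{v}^{n+1/2}_{\eps}=\eps\,(\mathbf{v}^{n+1/2}_{\eps}/\eps)\to0$; therefore $\beta^{n+1/2}_{\eps}\to1$, and an elementary computation shows that both branches of \eqref{eq:G} converge to $2\,\mathbf{F}^{n+1/2}_{\rm eff}$ (in the second branch the matrix acting on $\mathbf{F}^{n+1/2}_{\rm eff}$ tends to $2\,{\rm Id}$ as $\beta^{n+1/2}_{\eps}\to1$, regardless of the limit of $\hat{\mathbf{v}}^{n+1/2}_{\mathbf{E},\eps}$). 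Next, exactly as in the proof of Proposition~\ref{pro:Brackbill_scheme}, $\eta^{n+1/2}\to g^{0}$ because $\|\mathbf{v}^{n+1/2}_{\eps}\|^{2}=\eps^{2}\|\mathbf{v}^{n+1/2}_{\eps}/\eps\|^{2}\to0$ while $e^{n}_{\eps},e^{n+1}_{\eps}\to g^{0}$, so $\mathbf{G}^{n+1/2}\to-2g^{0}\,\nabla_{\mathbf{y}}b/b\,(\mathbf{y}^{n+1/2})$. To pin down the direction of $\mathbf{v}^{n+1/2}_{\eps}$ I would take the inner product of the second equation of \eqref{scheme:Chacon} with $\mathbf{v}^{n+1/2}_{\eps}$: the magnetic term and $\mathbf{F}^{n+1/2}_{\rm cons}$ drop out by orthogonality, leaving the kinetic-energy balance $(e^{n+1}_{\eps}-e^{n}_{\eps})/\Delta t=\mathbf{E}(\mathbf{x}^{n+1/2}_{\eps})\cdot(\mathbf{v}^{n+1/2}_{\eps}/\eps)$, whose limit is $0=\mathbf{E}(\mathbf{y}^{n+1/2})\cdot\mathbf{w}$. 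Hence $\mathbf{w}$ is collinear with $\mathbf{E}^{\perp}(\mathbf{y}^{n+1/2})$, so $\mathbf{v}^{n+1/2}_{\eps}/\|\mathbf{v}^{n+1/2}_{\eps}\|\to\pm\mathbf{E}^{\perp}/\|\mathbf{E}\|\,(\mathbf{y}^{n+1/2})$ and, using the planar identity ${\rm Id}=\mathbf{E}\otimes\mathbf{E}/\|\mathbf{E}\|^{2}+\mathbf{E}^{\perp}\otimes\mathbf{E}^{\perp}/\|\mathbf{E}\|^{2}$, one obtains $\mathbf{F}^{n+1/2}_{\rm cons}\to-2g^{0}\,(\mathbf{E}\cdot\nabla_{\mathbf{y}}b)\,\mathbf{E}/(b\,\|\mathbf{E}\|^{2})$ at $\mathbf{y}^{n+1/2}$.

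With this at hand, passing to the limit in the second equation of \eqref{scheme:Chacon} (whose left-hand side tends to $0$) gives $b(\mathbf{y}^{n+1/2})\,\mathbf{w}^{\perp}=\mathbf{E}(\mathbf{y}^{n+1/2})+\lim\mathbf{F}^{n+1/2}_{\rm cons}$; solving for $\mathbf{w}$ and recalling $\mathbf{y}^{n+1}-\mathbf{y}^{n}=\Delta t\,\mathbf{w}$ produces the first line of \eqref{eq:guiding_center_Chacon}, the second being $g^{n+1}=g^{n}$. As in the previous propositions, for $\Delta t$ small (independently of $\eps$ and $T$) this discrete system is uniquely solvable, so the full sequence converges. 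For the energy, $\mathbf{F}^{n+1/2}_{\rm cons}\perp\mathbf{v}^{n+1/2}_{\eps}$ and hence $\perp(\mathbf{x}^{n+1}_{\eps}-\mathbf{x}^{n}_{\eps})$, so \eqref{eq:te_Chacon} holds just as for Crank--Nicolson; passing to the limit and using that $\mathbf{y}^{n+1}-\mathbf{y}^{n}$ is collinear with $\mathbf{E}^{\perp}(\mathbf{y}^{n+1/2})$, hence orthogonal to $\nabla_{\mathbf{y}}\phi(\mathbf{y}^{n+1/2})$, a Taylor expansion gives $\phi(\mathbf{y}^{n+1})-\phi(\mathbf{y}^{n})=\mathcal{O}(\Delta t^{3})$ and thus \eqref{eq:te_limit_Chacon}. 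For the magnetic moment, $g^{n}\equiv g^{0}$ gives $\mu^{n}_{gc}=g^{0}/b(\mathbf{y}^{n})$, and a Taylor expansion of $1/b$ along $\mathbf{y}^{n+1}-\mathbf{y}^{n}$ combined with the guiding-center velocity from \eqref{eq:guiding_center_Chacon} yields \eqref{eq:mu_limit_Chacon}.

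I expect the main obstacle to be the limit of the projector-valued force $\mathbf{F}^{n+1/2}_{\rm cons}$: one must simultaneously resolve the case split in \eqref{eq:G} --- which collapses since $\beta^{n+1/2}_{\eps}\to1$ --- and determine the limiting direction of the vanishing velocity $\mathbf{v}^{n+1/2}_{\eps}$ --- which the kinetic-energy identity forces to coincide with that of $\mathbf{E}^{\perp}$. The hypothesis that $\nabla\phi$ is nowhere vanishing enters exactly here: it keeps $\mathbf{v}^{n+1/2}_{\mathbf{E},\eps}$, hence $\beta^{n+1/2}_{\eps}$, away from the degenerate value $0$, and through $b\,\mathbf{w}^{\perp}=\mathbf{E}+\lim\mathbf{F}^{n+1/2}_{\rm cons}$ it is what makes $\mathbf{v}^{n+1/2}_{\eps}/\eps$ have a nonzero limit under the stated assumptions, so that $\mathbf{v}^{n+1/2}_{\eps}/\|\mathbf{v}^{n+1/2}_{\eps}\|$ converges and the projector does not acquire a spurious limit.
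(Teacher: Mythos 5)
Your overall architecture --- compactness, identification of the limit of $\mathbf{G}^{n+1/2}$, passage to the limit in the velocity equation, uniqueness of the limiting iteration to upgrade subsequential convergence to full convergence, then Taylor expansions for the energy and the magnetic moment --- is the same as the paper's, and most individual steps (the limit $\beta^{n+1/2}_{\eps}\to1$, the collapse of both branches of \eqref{eq:G} to $2\,\mathbf{F}^{n+1/2}_{\rm eff}$, the limit $\eta^{n+1/2}\to g^{0}$, and the two final bullet points) are carried out correctly. The one place where you genuinely deviate from the paper is in how you pin down the limiting direction of $\mathbf{v}^{n+1/2}_{\eps}$, and that is where there is a gap.

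You deduce from the kinetic-energy balance that the limit $\mathbf{w}$ of $\eps^{-1}\mathbf{v}^{n+1/2}_{\eps}$ satisfies $\mathbf{E}(\mathbf{y}^{n+1/2})\cdot\mathbf{w}=0$, and from this you conclude that $\mathbf{v}^{n+1/2}_{\eps}/\|\mathbf{v}^{n+1/2}_{\eps}\|\to\pm\mathbf{E}^{\perp}/\|\mathbf{E}\|$. That inference is valid only when $\mathbf{w}\neq0$: if $\mathbf{w}=0$ the orthogonality relation is vacuous and carries no information about the limiting unit vector, so the projector in $\mathbf{F}^{n+1/2}_{\rm cons}$ could a priori acquire a different limit. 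Your closing claim that the hypothesis on $\nabla\phi$ ``is what makes $\mathbf{v}^{n+1/2}_{\eps}/\eps$ have a nonzero limit'' is not correct: the very formula you derive gives $\mathbf{w}=\frac{\mathbf{E}^{\perp}}{b}\bigl(-1+2g^{0}\,\frac{\mathbf{E}\cdot\nabla_{\mathbf{y}}b}{b\,\|\mathbf{E}\|^{2}}\bigr)(\mathbf{y}^{n+1/2})$, which can vanish even though $\mathbf{E}$ does not (whenever $2g^{0}\,\mathbf{E}\cdot\nabla_{\mathbf{y}}b=b\,\|\mathbf{E}\|^{2}$ at the relevant point). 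The paper avoids this by extracting a further subsequence along which the unit vector $\mathbf{v}^{n+1/2}_{\eps}/\|\mathbf{v}^{n+1/2}_{\eps}\|$ converges to some $\hat{\mathbf{u}}^{n+1/2}$ \emph{independently} of $\mathbf{u}^{n+1/2}=\mathbf{w}$, passing to the limit in the full vector equation, and reading off from $\hat{\mathbf{u}}^{n+1/2}\bigl(\|\mathbf{u}^{n+1/2}\|-2g^{0}\,\hat{\mathbf{u}}^{n+1/2}\cdot\frac{\nabla_{\mathbf{y}}^{\perp}b}{b^{2}}(\mathbf{y}^{n+1/2})\bigr)=-\frac{\mathbf{E}^{\perp}}{b}(\mathbf{y}^{n+1/2})$ that $\hat{\mathbf{u}}^{n+1/2}$ is collinear to $\mathbf{E}^{\perp}$ --- an argument that works whether or not $\|\mathbf{u}^{n+1/2}\|=0$, precisely because the right-hand side is nonzero. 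Replacing your energy-balance step by this vector-equation argument (or supplementing it with a separate treatment of the case $\mathbf{w}=0$, which the vector equation again forces to be compatible only with $\hat{\mathbf{u}}^{n+1/2}\parallel\mathbf{E}^{\perp}$) closes the gap; the remainder of your proposal then goes through as written.
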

\begin{proof}
The beginning of the proof of Proposition \ref{pro:Brackbill_scheme}
applies verbatim to the present case, since it only utilizes the first
equation of the scheme and the definition of $\mathbf{F}^{n+1/2}_{\rm
  eff}$. In this way, one arrives at a stage where it is known that $(\eps^{-1} \mathbf{v}^{n+1/2}_{\eps})_{\eps > 0}$ is uniformly bounded with respect to $\eps$ and a subsequence $(\mathbf{x}^{n}_{\eps}, e^{n}_{\eps})$ converges to some $(\mathbf{y}^{n},g^{n})$ as $\eps$ goes to zero, satisfying the second equation of \eqref{eq:guiding_center_Chacon} and 
\[
\lim \limits_{\eps \rightarrow 0} \mathbf{F}_{\rm eff}^{n+1/2} \,=\, -\,g^{0} \dfrac{\nabla_{\mathbf{y}} b}{b} (\mathbf{y}^{n+1/2}).
\]

Note that, since by assumption $\bE$ is nowhere vanishing, this also implies
\[
\lim_{\eps \rightarrow 0} \beta^{n+1/2} = 1
\]
so that 
\begin{flalign}
 	\nonumber
 	\lim \limits_{\eps \rightarrow 0} \mathbf{G}^{n+1/2} \,=\, 2\,\lim \limits_{\eps \rightarrow 0} \mathbf{F}_{\rm eff}^{n+1/2} \,=\, -2\,g^{0} \dfrac{\nabla_{\mathbf{y}} b}{b} (\mathbf{y}^{n+1/2}).
      \end{flalign}
This time the determination of the limit of $\eps^{-1} \mathbf{v}^{n+1/2}_{\eps}$ is much more complicated. By extracting further if necessary we may assume that it converges to some $\bu^{n+1/2}$ and that $\mathbf{v}^{n+1/2}_{\eps}/\|\mathbf{v}^{n+1/2}_{\eps}\|$ converges to some $\hat{\bu}^{n+1/2}$. By using that for any nonzero $\bz$
\[
{\rm Id} -  \frac{\bz\otimes\bz}{\|\bz\|^2}\,=\,
\frac{\bz^\perp\otimes\bz^\perp}{\|\bz\|^2}
\]
and taking a limit in the second equation of \eqref{scheme:Chacon}, we derive
\[
\hat{\bu}^{n+1/2}\left(\|\bu^{n+1/2}\|-2g^0\,\hat{\bu}^{n+1/2}\cdot\frac{\nabla_{\mathbf{y}}^\perp b}{b^2}(\mathbf{y}^{n+1/2})\right)
\,=\,-\frac{\bE^\perp}{b}(\mathbf{y}^{n+1/2})
\]
Since $\bE$ is non vanishing, this implies that $\hat{\bu}^{n+1/2}$ is colinear to $\bE^\perp$ and thus
\[
\bu^{n+1/2}\,=\,-\frac{\bE^\perp}{b}(\mathbf{y}^{n+1/2})
+2g^0\,\bE^\perp(\mathbf{y}^{n+1/2})\,\frac{\bE\cdot\nabla_{\mathbf{y}} b}{b^2\|\bE\|^2}(\mathbf{y}^{n+1/2})\,.
\]
This completes the derivation of \eqref{eq:guiding_center_Chacon}, which then may be used as before to upgrade the convergence to the full convergence.

The rest of the proof for the variations of the total discrete energy and the discrete adiabatic
invariant is then analogous to the ones in the proof of Propositions~\ref{pro:CN} and~\ref{pro:Brackbill_scheme}.
\end{proof}

This latter Proposition shows that the modified scheme
\eqref{scheme:Chacon} does not provide a consistent asymptotic limit
when $\eps$ goes to zero and $\Delta t $ is
fixed. \textcolor{red}{Actually, in \cite{RiCh20}, the authors propose
  an adaptive time step procedure to overcome  this drawback. }

\begin{remark}
  \label{rem:ener}
It is worth
mentioning that the Crank-Nicolson is well suited to design an
approximation preserving the total energy. In particular, we refer to
\cite{SiTa92} where the following  modified electric field 
	\begin{flalign} \label{replacement_E}
		\mathbf{\tilde{E}}(\bx_\eps^{n+1/2}) \,=\, \dfrac{\phi(\mathbf{x}^{n+1}_{\eps}) - \phi(\mathbf{x}^{n}_{\eps})}{(\mathbf{x}^{n+1}_{\eps} - \mathbf{x}^{n}_{\eps} ) \cdot \mathbf{E}(\bx_\eps^{n+1/2})} \, \mathbf{E}(\bx_\eps^{n+1/2}).
              \end{flalign}
               is applied ensuring exact preservation of the total
               energy,  that is $\mathcal{E}^{n}_{\eps} =
               \mathcal{E}^{0}_{\eps}$, for all $n \in
               \mathbb{N}$. However, this exact preservation does not
               help to provide a  consistent approximation as $\eps
               \to 0$ since the discrete kinetic energy is not
               uniformly consistent with respect to $\eps$.
\end{remark}
               
%%%%%%%%%%%%%%%%%%%%%%%%%%%%%%%%%%%%%%%%%%%%%%%%%%%%%%%%%%%%%%%
%
%							A MODIFIED CRANK - NICOLSON SCHEME WITH AN ADDITIONAL VARIABLE
%
%%%%%%%%%%%%%%%%%%%%%%%%%%%%%%%%%%%%%%%%%%%%%%%%%%%%%%%%%%%%%%%

\section{A modified Crank-Nicolson scheme with an additional variable}
\label{sec:Modified_CN}
We now propose a new numerical scheme based on the Crank-Nicolson
method \eqref{scheme:CNS},  designed to be asymptotically consistent
with the  guiding center model \eqref{eq:guiding_center_system} as
$\eps \to 0$. To this aim, we apply the strategy developed in
\cite{FiRo17} which consists in  solving an augmented system
incorporating  the discrete  kinetic energy $(e_\eps^n)_{n \in \N}$
into the discrete system. Furthermore, as in the previous work, we add
an effective force to capture the drift $ \nabla_{\bx}^{\perp} b/
b^{2}$ in the limit  $\eps \to 0$.  More precisely, we reformulate the
system \eqref{eq:ODE_system} for  $(\bx_\eps, \bv_\eps) $ in an
equivalent manner for the new unknowns  $(\bx_\eps, \bw_\eps,e_\eps)$
as 
\begin{equation} \label{eq:reformulated_ODE_system}
    \begin{dcases}
       \, \eps \, \dfrac{\dD \bx_\eps}{{\dD} t} \,=\, \mathbf{w}_\eps \,, \\
       \, \eps \, \dfrac{\dD e_\eps}{{\dD} t}  \,=\,  \bE(\bx_\eps) \cdot \mathbf{w}_\eps\,, \\
       \, \eps \, \dfrac{\dD \mathbf{w}_\eps}{{\dD} t}  \,=\, \bE(\bx_\eps) \,-\, \chi \left(\mathbf{w}_\eps, e_\eps \right) \, \dfrac{\nabla_{\bx} b}{b} (\bx_\eps) \,-\, b(\bx_\eps) \, \dfrac{\mathbf{w}_\eps^{\perp}}{\eps}, \\
       \, \bx_\eps^0 \,=\, \bx(0)\,, \quad \mathbf{w}_\eps^0 \,=\, \bv(0)\,, \quad e_\eps^0 \,=\, \dfrac{1}{2} \| \bv(0) \|^{2}\,,
    \end{dcases}
\end{equation}
where  $\chi$ is chosen as
\begin{equation}
    \nonumber
    \chi \left( \mathbf{w}, e \right) \,=\, \max \left( e - \dfrac{1}{2} \left \| \mathbf{w}\right \|^{2}, 0 \right), \quad \forall \, ( \mathbf{w}, e) \, \in \,  \R^{2} \times \R^{+} \,.
  \end{equation}
 Then, we discretize this system applying a classical Crank-Nicolson
 scheme  to  $(\bx^n_\eps, \mathbf{w}^n_\eps, e^{n}_\eps)$, 
\begin{equation} \label{scheme:modified_CN}
    \begin{dcases}
        \,\eps \, \dfrac{\bx^{n+1}_\eps - \bx^n_\eps}{\Delta t} \,=\, \mathbf{w}^{n+1/2}_\eps \,, \\
        \,\eps \, \dfrac{e^{n+1}_\eps - e^n_\eps}{\Delta t} \,=\,  \bE (\bx^{n+1/2}_\eps) \cdot \mathbf{w}^{n+1/2}_\eps \,, \\
        \,\eps \, \dfrac{\mathbf{w}^{n+1}_\eps - \mathbf{w}^n_\eps}{\Delta t} \,=\, \bE(\bx^{n+1/2}_\eps) \,-\, \chi(\mathbf{w}_{\eps}^{n+1/2}, e_{\eps}^{n+1/2}) \, \dfrac{\nabla_{\bx} b}{b}(\bx^{n+1/2}_{\eps}) \,-\, b(\bx^{n+1/2}_{\eps}) \, \dfrac{(\mathbf{w}^{n+1/2}_{\eps})^{\perp}}{\eps} \,, \\
       \, \mathbf{x}_{\eps}^{0} \,=\, \mathbf{x}_{\eps}(0), \quad \mathbf{w}_{\eps}^{0} \,=\, \bv_{\eps}(0), \quad e_{\eps}^{0} \,=\, \frac{1}{2} \| \bv_{\eps}(0) \|^{2} \,,
    \end{dcases}
\end{equation}
in which
\begin{equation}
    \nonumber
    \mathbf{w}^{n+1/2}_\eps \,=\, \dfrac{\mathbf{w}^{n+1}_\eps + \mathbf{w}^n_\eps}{2}\,, \quad \bx^{n+1/2}_\eps \,=\, \dfrac{\bx^{n+1}_\eps \,+\, \bx^n_\eps}{2}\,.
  \end{equation}
\textcolor{red}{Computing an approximation at
  time $t^{n+1}$ requires the numerical solution of a nonlinear
  system. Here,  we employ a straightforward fixed-point iteration
  scheme, where the variable  $\bx^{n+1/2}_{\eps}$, depending on $\bx^{n+1}_{\eps}$, is frozen at each
step. This approach reduces the problem to solving a two-dimensional
linear system for the velocity
variable  $\mathbf{w}^{n+1}_\eps$ of each particle. The convergence tolerance is set to $10^{-10}$. Although alternative approaches based on Newton's method could be utilized, the current fixed-point scheme converges rapidly, typically requiring fewer than five iterations.}

At each time step, the velocity $(\bv^{n}_{\eps})_{n \in \N}$ is given by
\begin{equation}
    \nonumber
    \bv^{n}_{\eps} \,=\, \sqrt{2 \,e^{n}_{\eps}} \, \dfrac{\mathbf{w}^{n}_{\eps}}{\| \mathbf{w}^{n}_{\eps} \|}\,.
\end{equation}
As for the original Crank-Nicolson scheme \eqref{eq:te_CN}, the variation of the discrete total
energy obeys
\begin{flalign} \label{eq:te_modified_CN}
\dfrac{\mathcal{E}^{n+1}_{\eps} - \mathcal{E}^{n}_{\eps}}{\Delta t} \,=\, \dfrac{e^{n+1}_{\eps} - e^{n}_{\eps}}{\Delta t} \,+\, \dfrac{\phi_{\eps}^{n+1} - \phi_{\eps}^{n}}{\Delta t} \,=\, \Delta t^{2} \,\mathcal{O}\left(\left\|\frac{\mathbf{x}_{\eps}^{n+1}-\mathbf{x}_{\eps}^{n}}{\Delta t}\right\|^{3} \right).
\end{flalign}
Therefore, under the reasonable assumption that $\mathbf{x}_{\eps}^{n}$ is bounded, the variation of the total discrete  energy is of order  $\Delta t^2$. Now, let us investigate the asymptotic behavior of the scheme \eqref{scheme:modified_CN} as $\eps $ goes to zero.
\begin{proposition}[Consistency in the limit $\eps \rightarrow 0$ for a fixed $\Delta t$] \label{pro:modified_CN}
Let $\phi \in W^{3, \infty}(\mathbb{R}^{2})$, choose an a priori bound $M$ and a final time $T>0$ then a sufficiently small fixed time step $\Delta t$. We set  $N_{T} = \left
   \lfloor T/\Delta t \right \rfloor$.\\
Assume that the Crank-Nicolson scheme \eqref{scheme:modified_CN} defines a numerical approximation
 $(\mathbf{x}^{n}_{\eps}, \mathbf{w}^{n}_{\eps},e^{n}_{\eps})_{0 \leq n \leq N_{T}}$ satisfying
    \begin{itemize}
        \item[$(i)$] for all $1 \leq n \leq N_{T}$, $\mathbf{x}^{n}_{\eps}$ is uniformly bounded with respect to $\eps > 0$\,;
        \item[$(ii)$] in the limit $\eps \rightarrow 0$, $(\mathbf{x}^{0}_{\eps},e^{n}_{\eps})$ converges to some $(\mathbf{y}^{0}, g^{0})$ such that $g^{0}\leq M$\,.
    \end{itemize}  
    Then we have
    \begin{itemize}
    	\item for all $1 \leq n \leq N_{T}$, $(\mathbf{x}_{\eps}^{n},
          e^{n}_{\eps})$ converges to $(\mathbf{y}^{n}, g^{n})$, as $\eps \rightarrow 0$ and the limit $(\mathbf{y}^{n}, g^{n})_{1 \leq n \leq N_{T}}$ solves
    \begin{equation} \label{eq:guiding_center_modified_CN}
         \begin{dcases}
            \dfrac{\mathbf{y}^{n+1} - \mathbf{y}^{n}}{\Delta t} \,=\, -\, \dfrac{\mathbf{E}^{\perp}}{b}(\mathbf{y}^{n+1/2}) \,+\, g^{n+1/2} \, \dfrac{\nabla_{\mathbf{y}}^{\perp} b}{b^{2}}(\mathbf{y}^{n+1/2})\,, \\
            \dfrac{g^{n+1} - g^{n}}{\Delta t} \,=\, g^{n+1/2} \, \dfrac{\mathbf{E} \cdot \nabla_{\mathbf{y}}^{\perp} b}{b^{2}} (\mathbf{y}^{n+1/2}) \,.
        \end{dcases}
    \end{equation}
    \item  for all $1 \leq n \leq N_{T}$, the total energy
  $\mathcal{E}^{n}_{\eps} \,=\, e^{n}_{\eps} \,+\, \phi(\mathbf{x}^{n}_{\eps})$
  converges to $\mathcal{E}^{n}_{gc} \,:=\, g^{n} \,+\, \phi(\mathbf{y}^{n})$ as
  $\eps \rightarrow 0$, which satisfies
    \begin{flalign} \label{eq:te_limit_modified_CN}
        \dfrac{\mathcal{E}^{n+1}_{gc} -  \mathcal{E}^{n}_{gc}}{\Delta t} \,=\, \mathcal{O}\left( \Delta t^{2}  \right)\,,
     \end{flalign}
      \item  defining the discrete  magnetic moment $\mu^{n}_{\eps} \,=\,
  \dfrac{e^{n}_{\eps}}{b(\mathbf{x}_{\eps}^{n})}$ converges to
  $\mu^{n}_{gc} \,:=\, \dfrac{g^{n}}{b(\mathbf{y}^{n})}$ as $\eps \rightarrow
  0$ such that
    \begin{flalign} \label{eq:mu_limit_modified_CN}
         \dfrac{\mu_{gc}^{n+1} - \mu_{gc}^{n}}{\Delta t} \,=\, \mathcal{O}\left(  \Delta t^{2}  \right) \,
    \end{flalign}
    where $(\mathbf{y}^{n+1/2}, g^{n+1/2})$ is defined as  $\mathbf{y}^{n+1/2} \,=\, {(\mathbf{y}^{n+1} +
            \mathbf{y}^{n})}/{2}$ and  $g^{n+1/2} \,=\, {(g^{n+1} + g^{n})}/{2}$.
    \end{itemize}
    \end{proposition}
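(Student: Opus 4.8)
The plan is to mirror the proofs of Propositions~\ref{pro:CN}--\ref{pro:Chacon_scheme}, the new feature being that the augmented variable $e^n_\eps$ together with the cut-off $\chi$ now feeds the $\nabla_{\mathbf{y}}^\perp b/b^2$ drift into \emph{both} the position and the kinetic-energy equations of the limiting scheme, which is exactly what failed in the previous cases. As in Proposition~\ref{pro:CN}, assumption $(i)$ and the first line of \eqref{scheme:modified_CN} show that $(\eps^{-1}\mathbf{w}^{n+1/2}_\eps)_{\eps>0}$ is bounded uniformly in $\eps$ (by $2C/\Delta t$, with $C$ a uniform bound on $\bx^n_\eps$), hence $\mathbf{w}^{n+1/2}_\eps\to0$ as $\eps\to0$. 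Plugging this into the second line gives $|e^{n+1}_\eps-e^n_\eps|\le\Delta t\,\|\bE\|_\infty\,\|\eps^{-1}\mathbf{w}^{n+1/2}_\eps\|$, so a finite induction on $n$ (at fixed $\Delta t$) shows that $(e^n_\eps)_{0\le n\le N_{T}}$ is bounded uniformly in $\eps$; since moreover $\mathbf{w}^n_\eps=2\mathbf{w}^{n-1/2}_\eps-\mathbf{w}^{n-1}_\eps$ with $(\mathbf{w}^0_\eps)_{\eps>0}$ bounded, the same holds for $(\mathbf{w}^n_\eps)_{\eps>0}$, and therefore $\eps(\mathbf{w}^{n+1}_\eps-\mathbf{w}^n_\eps)/\Delta t\to0$. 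Finally, since $\mathbf{w}^{n+1/2}_\eps\to0$ and any limit of $e^{n+1/2}_\eps$ is nonnegative, $\chi(\mathbf{w}^{n+1/2}_\eps,e^{n+1/2}_\eps)$ converges to that limit along any converging subsequence.

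Then I would extract a subsequence, still labelled $\eps$, along which $\bx^n_\eps\to\by^n$, $e^n_\eps\to g^n$ and $\eps^{-1}\mathbf{w}^{n+1/2}_\eps\to\bu^{n+1/2}$ for every $n$. Passing to the limit in the third line of \eqref{scheme:modified_CN} and using $b\ge b_0>0$ yields
\[
\bu^{n+1/2}\,=\,-\,\frac{\bE^\perp}{b}(\by^{n+1/2})\,+\,g^{n+1/2}\,\frac{\nabla_{\mathbf{y}}^\perp b}{b^2}(\by^{n+1/2})\,,
\]
and then the limits of the first and second lines of \eqref{scheme:modified_CN}, combined with $\bE\cdot\bE^\perp=0$, produce exactly system \eqref{eq:guiding_center_modified_CN} issued from $(\by^0,g^0)$. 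Since \eqref{eq:guiding_center_modified_CN} is an implicit midpoint scheme, a fixed-point argument combined with a discrete Gronwall estimate shows that, $M$ and $T$ being fixed, for $\Delta t$ small enough it admits a unique solution which moreover stays in a fixed bounded set; this is where the bound $g^0\le M$ is used, to control $g^{n+1/2}$ and hence the Lipschitz constant of the right-hand side. The subsequential limit being thereby uniquely determined, the whole family $(\bx^n_\eps,e^n_\eps)$ converges to $(\by^n,g^n)$.

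The remaining two items follow as in Propositions~\ref{pro:CN} and~\ref{pro:Brackbill_scheme}: convergence of $\cE^n_\eps$ and $\mu^n_\eps$ to $\cE^n_{gc}$ and $\mu^n_{gc}$ is immediate by continuity. For \eqref{eq:te_limit_modified_CN} one Taylor-expands $\phi(\by^{n+1})-\phi(\by^n)$ about $\by^{n+1/2}$; because $\nabla_{\mathbf{y}}\phi=-\bE$, by the first line of \eqref{eq:guiding_center_modified_CN} and $\bE\cdot\bE^\perp=0$ the first-order term equals $-\Delta t\,g^{n+1/2}\,(\bE\cdot\nabla_{\mathbf{y}}^\perp b/b^2)(\by^{n+1/2})$, which cancels $g^{n+1}-g^n$ given by the second line, leaving $\mathcal{O}(\Delta t^2)$. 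For \eqref{eq:mu_limit_modified_CN} one writes $\by^n,\by^{n+1}$ and $g^n,g^{n+1}$ as $\by^{n+1/2}\mp\frac{\Delta t}{2}d_{\by}$ and $g^{n+1/2}\mp\frac{\Delta t}{2}d_{g}$, where $d_{\by}$ and $d_{g}$ are the right-hand sides of \eqref{eq:guiding_center_modified_CN}, and Taylor-expands $s\mapsto(g^{n+1/2}+s\,d_{g})/b(\by^{n+1/2}+s\,d_{\by})$: one obtains $\mu^{n+1}_{gc}-\mu^n_{gc}=\Delta t\cdot(\text{its derivative at }s=0)+\mathcal{O}(\Delta t^3)$, and that derivative vanishes thanks to the identities $\nabla_{\mathbf{y}}b\cdot\nabla_{\mathbf{y}}^\perp b=0$ and $\bE^\perp\cdot\nabla_{\mathbf{y}}b=-\bE\cdot\nabla_{\mathbf{y}}^\perp b$, which gives \eqref{eq:mu_limit_modified_CN}.

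The main obstacle is the middle step: unlike the plain Crank--Nicolson limit \eqref{eq:guiding_center_scheme_CN}, the system \eqref{eq:guiding_center_modified_CN} is genuinely nonlinear and couples $\by$ with $g$, so passing from subsequential compactness to full convergence forces one to prove well-posedness and a uniform bound for the limiting midpoint scheme --- which is precisely why the a priori constant $M$ and the smallness of $\Delta t$ enter the statement.
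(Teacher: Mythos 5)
Your proposal is correct and follows essentially the same route as the paper's proof: uniform boundedness of $\eps^{-1}\mathbf{w}^{n+1/2}_{\eps}$ from the first line, boundedness of the energies from the second, convergence of $\chi$ to $g^{n+1/2}$, passage to the limit in the third equation, and upgrade from subsequential to full convergence via well-posedness of the nonlinear limiting midpoint scheme (which is where $M$ and the smallness of $\Delta t$ enter, exactly as the paper remarks). The only difference is that you spell out the Taylor-expansion cancellations for \eqref{eq:te_limit_modified_CN} and \eqref{eq:mu_limit_modified_CN}, which the paper omits as analogous to Proposition~\ref{pro:CN}; your computations are correct.
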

    
      \begin{proof}
We again follow the lines of the proof of Proposition~\ref{pro:CN}. To begin with, from the first line of \eqref{scheme:modified_CN} we deduce that $(\eps^{-1} \mathbf{w}^{n+1/2}_{\eps})_{\eps > 0}$ is
 uniformly bounded with respect to $\eps$. Combined with the second line of \eqref{scheme:modified_CN} this implies that each $(e^{n+1/2}_{\eps})_{\eps > 0}$ is also uniformly bounded with respect to $\eps$.

Therefore, up to a subsequence $(\mathbf{x}^{n}_{\eps}, e^{n}_{\eps})$ converges to $(\mathbf{y}^{n},g^{n})$ as $\eps$ goes to zero. One readily deduces that
\begin{equation} 
            \nonumber
            \lim_{\eps \rightarrow 0} \, \chi (\mathbf{w}_\eps^{n+1/2}, e_\eps^{n+1/2}) \,=\, g^{n+1/2} \,,
\end{equation}
so that from the third equation of \eqref{scheme:modified_CN} stems
    \begin{equation} 
            \nonumber
            \lim_{\eps \rightarrow 0} \dfrac{ \mathbf{w}^{n+1/2}_{\eps}}{\eps} \,=\, -\, \dfrac{ \mathbf{E}^{\perp}}{b}(\mathbf{y}^{n+1/2}) \,+\, g^{n+1/2} \, \dfrac{\nabla_{\mathbf{y}}^{\perp} b}{b^{2}}(\mathbf{y}^{n+1/2})\,.
      \end{equation}
Inserting the latter in the first and second equations of
\eqref{scheme:modified_CN} completes the derivation of
\eqref{eq:guiding_center_modified_CN}. Then, again, the convergence is
upgraded from the convergence of a subsequence to full convergence by
the uniqueness of solutions to the limiting system,
\eqref{eq:guiding_center_modified_CN}. Note that the system~\eqref{eq:guiding_center_modified_CN} is more nonlinear from previously derived asymptotic systems, which is why the required constraint on $\Delta t$ depends here on a priori bound on $g^0$ and $T$.

The rest of the proof, on the variations of the total discrete energy
and the discrete magnetic moment, is omitted as completely analogous to the corresponding one of Proposition~\ref{pro:CN}.
\end{proof}

Proposition \ref{pro:modified_CN} shows that the modified scheme \eqref{scheme:modified_CN} is consistent
uniformly with respect to $\eps$ and allows to recover a consistent approximation of the guiding center system
\eqref{eq:guiding_center_system}. Moreover, the new scheme also
preserves the second order accuracy for the total energy and the
magnetic moment. 

Let us also stress that a straightforward adaptation of the conservation trick \eqref{replacement_E} from \cite{SiTa92} provides a genuinely energy conserving version of the present scheme. Yet our numerical simulations, not reported here, show no further significant improvement so that we have decided not to delve further into this direction.

\textcolor{red}{It is worth noting that this scheme does not guarantee
  the nonnegativity of the variable $e_\eps^n$ for all $n \in \N$. If such a situation arises, one possible remedy is to update $e_\eps^n$ as
$$
	e_\eps^n  \,=\, \dfrac{1}{2} \| \mathbf{w}^{n}_{\eps} \|^{2} \,.
$$
However, in the upcoming numerical simulations, this scenario never occurs.}

%%%%%%%%%%%%%%%%%%%%%%%%%%%%%%%%%%%%%%%%%%%%%%%%%%%%%%%%%%%%%
%
%							NUMERICAL SOLUTION
%
%%%%%%%%%%%%%%%%%%%%%%%%%%%%%%%%%%%%%%%%%%%%%%%%%%%%%%%%%%%%%%%
\section{Numerical simulations}
\label{sec:Numerical_simulation}

In this section, we provide examples of numerical computations to validate and compare
the different time discretization schemes introduced in the previous
sections.  We first consider the motion of a single particle  under the effect of a given
electromagnetic field. It allows us to  illustrate the theoretical
results in the limit $\varepsilon\rightarrow 0$ of the numerical
schemes and their accuracy for
multi-scale problems.

Then we consider the Vlasov-Poisson system with an external non
uniform magnetic field.  We  apply a classical Particle-In-Cell
method with the  time discretization technique based on the
Crank-Nicolson scheme \eqref{scheme:modified_CN} to describe the diocotron
instability in a disk and also the stability of vortices in a D-shape domain.

%%%%%%%%%%%%%%%%%%%%%%%%%%% Sub section %%%%%%%%%%%%%%%%%%%%%%%%%%%%%
\subsection{One single particle motion}
We first investigate  the motion of an individual particle in a given
electromagnetic field. 
We consider the electric field $\mathbf{E} = - \nabla_{\mathbf{x}} \phi$
where the potential  $\phi$ is given by
\begin{equation} \label{phi}
	\phi (\bx) \,=\,  \frac{x_{2}^{2}}{2} \,,
      \end{equation}
      while the external magnetic field is
\begin{equation} \label{magnetic_field}
    	b(\bx) \,=\, 1 \,+\, \|\bx\|^2\,.
\end{equation}
The initial condition is chosen as $\mathbf{x}^{0} = (2,2), \mathbf{v}^{0} =
(3,3)$ and the final time $T = 1$. On the one hand,  we compute  
reference solutions $(\bx_\eps, \bw_\eps, e_\eps)_{\eps > 0}$  and
$(\by, g)$  to the stiff
initial value problem \eqref{eq:ODE_system}  and to the asymptotic
problem \eqref{eq:guiding_center_system} thanks to an explicit
fourth-order Runge-Kutta scheme using  a small time step chosen
according to the size of order $\cO(\eps^2)$ for the initial system. On the
other hand,  for various time steps $\Delta t$, independent of $\eps$, we compute approximate solutions $(\bx_{\eps, \Delta t}, \bw_{\eps,
  \Delta t}, e_{\eps, \Delta t})$ using  the modified Crank-Nicolson
scheme \eqref{scheme:modified_CN} and also compare the results with those obtained using \eqref{scheme:Brackbill} proposed
in \cite{BrFo85, VuBr95}, and
\eqref{scheme:Chacon} described in \cite{RiCh20, ChCh23}.  For completeness, we also
compare our results with those obtained using an IMEX2L for the
augmented system \eqref{eq:augmented_ODE_system} developed
in \cite{FiRo16,FiRo17}. To evaluate the accuracy, the numerical error is measured as
\begin{flalign*}
    \begin{dcases}
        \| \mathbf{x}_{\eps, \Delta t} - \mathbf{x}_{\eps} \| := \dfrac{\Delta t}{T} \sum \limits_{n = 0}^{N_{T}} \| \mathbf{x}_{\eps, \Delta t}^{n} - \mathbf{x}_{\eps}(t^{n}) \|\,, \\
       \| \mathbf{x}_{\eps, \Delta t} - \mathbf{y} \| := \dfrac{\Delta t}{T} \sum \limits_{n = 0}^{N_{T}} \| \mathbf{x}_{\eps, \Delta t}^{n} - \mathbf{y}(t^{n}) \|\,, \\
       \| e_{\eps, \Delta t} - g \| := \dfrac{\Delta t}{T} \sum \limits_{n = 0}^{N_{T}} | e_{\eps, \Delta t}^{n} - g(t^{n}) |\,.
    \end{dcases}
  \end{flalign*}

In Figure \ref{Fig:Fig1}, we present the numerical error on
$\|\mathbf{x}_{\eps, \Delta t} - \mathbf{x}_{\eps} \|$ expressed with respect
to $\eps$ in log-log scale  for various time steps $\Delta t\in\{10^{-5},\cdots,10^{-1}\}$. When
$\eps\geq 10^{-1}$, we observe the expected second order accuracy of the
different schemes. However,  as $\eps$ becomes smaller, the time steps are
too large and the numerical error for both schemes \eqref{scheme:Brackbill}  and
\eqref{scheme:Chacon} increases. In contrast, the numerical error associated to the
modified Crank-Nicolson scheme \eqref{scheme:modified_CN} and the
IMEX2L \cite{FiRo17} decreases with respect to $\eps$. This behavior is typical
of an asymptotic preserving scheme for which the error becomes of order
$\eps$ when the time step $\Delta t$ is sufficiently large. It is worth mentioning that
the behavior of the errors for \eqref{scheme:modified_CN} and the
IMEX2L  differs significantly as $\eps\ll 1$. Indeed, even with a large
time step the  modified Crank-Nicolson scheme
\eqref{scheme:modified_CN} remains so accurate that the error  of order
$\Delta t^2$ is negligible compared to the error with respect to
$\eps$.  This \textcolor{red}{phenomenon} can also be observed  in Figures \ref{Fig:Fig2}
and \ref{Fig:Fig3},  where we report the errors  compared to the
reference solution of the asymptotic model $\| \mathbf{x}_{\eps, \Delta t} - \mathbf{y} \|$ and  $\| e_{\eps,
  \Delta t} - g \|$. Clearly the schemes \eqref{scheme:Brackbill}  and
\eqref{scheme:Chacon}  do not capture a consistent approximation
$(\by,g)$ to  the asymptotic solutions
\eqref{eq:guiding_center_system} as $\eps\rightarrow 0$. These
numerical experiments 
illustrate  the lack of consistency  proven in Propositions \ref{pro:Brackbill_scheme} and
\ref{pro:Chacon_scheme}.  In contrast, both  schemes
\eqref{scheme:modified_CN}  and the IMEX2L, based on the approximation
of the augmented system \eqref{eq:augmented_ODE_system}, successfully capture the limit with the correct convergence rate with
respect to $\eps$ (slope of order one). The advantage of the scheme
\eqref{scheme:modified_CN} is that, when $\eps\ll 1$, the amplitude of
the numerical error is much smaller than the one corresponding to
other schemes. 

\begin{figure}
	\centering	
     	{\includegraphics[width=0.49\linewidth]{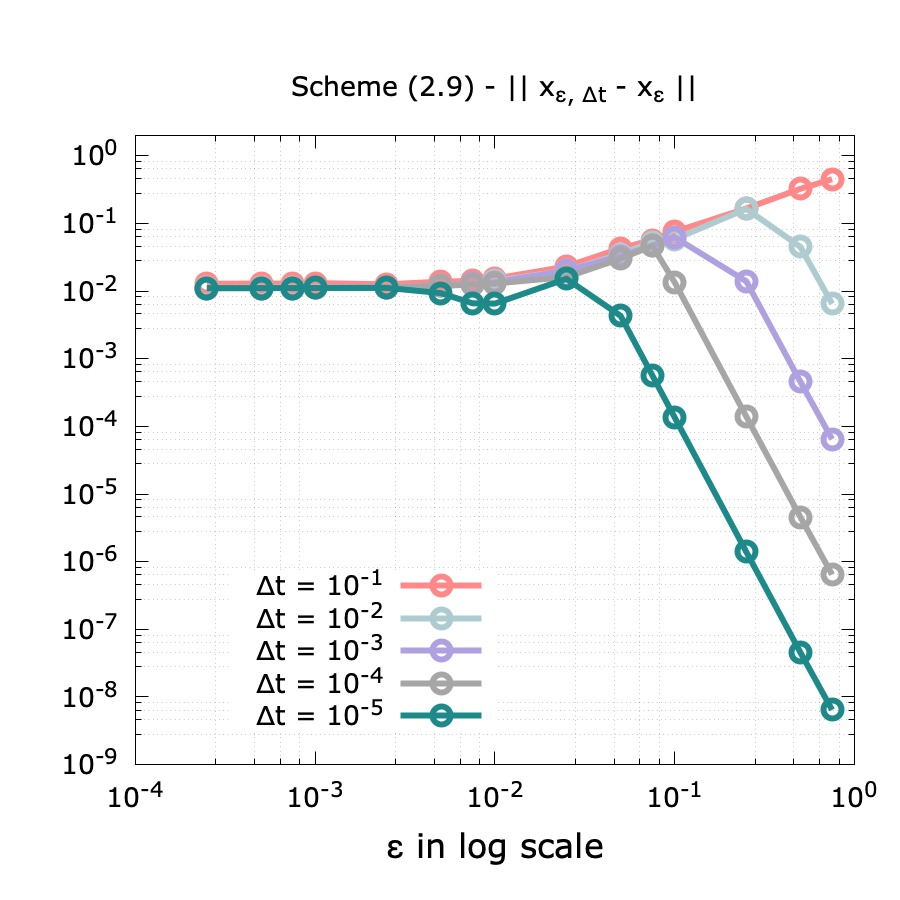}}
	{\includegraphics[width=0.49\linewidth]{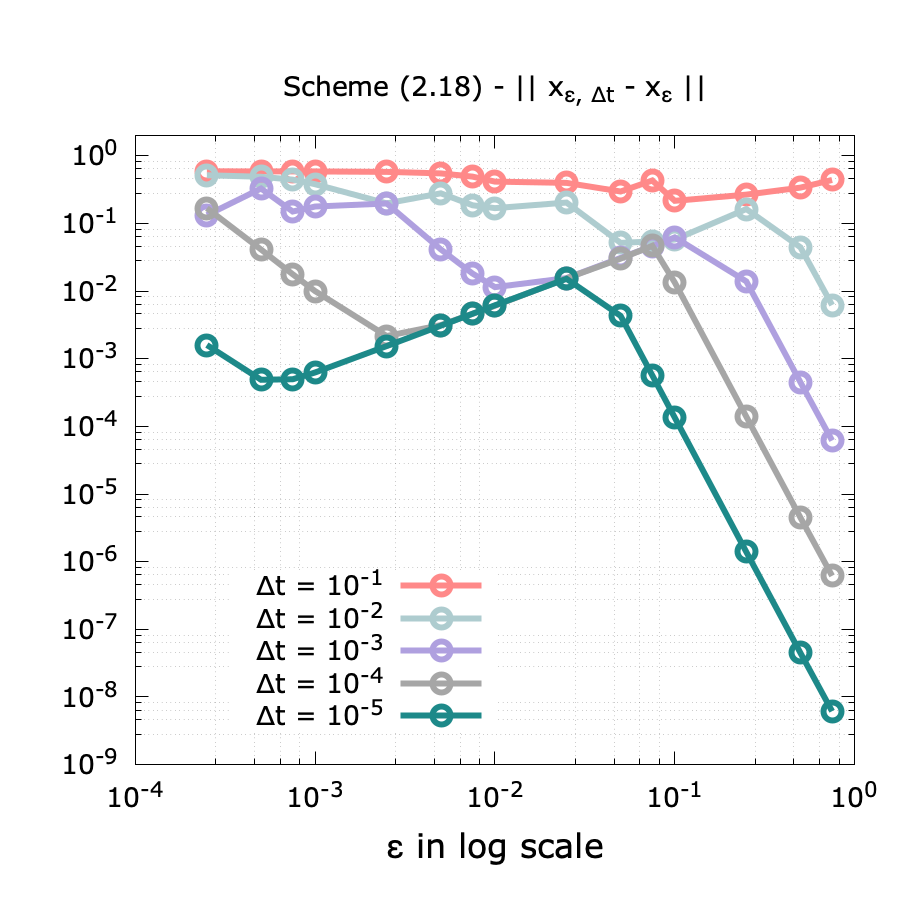}}
	{\includegraphics[width=0.49\linewidth]{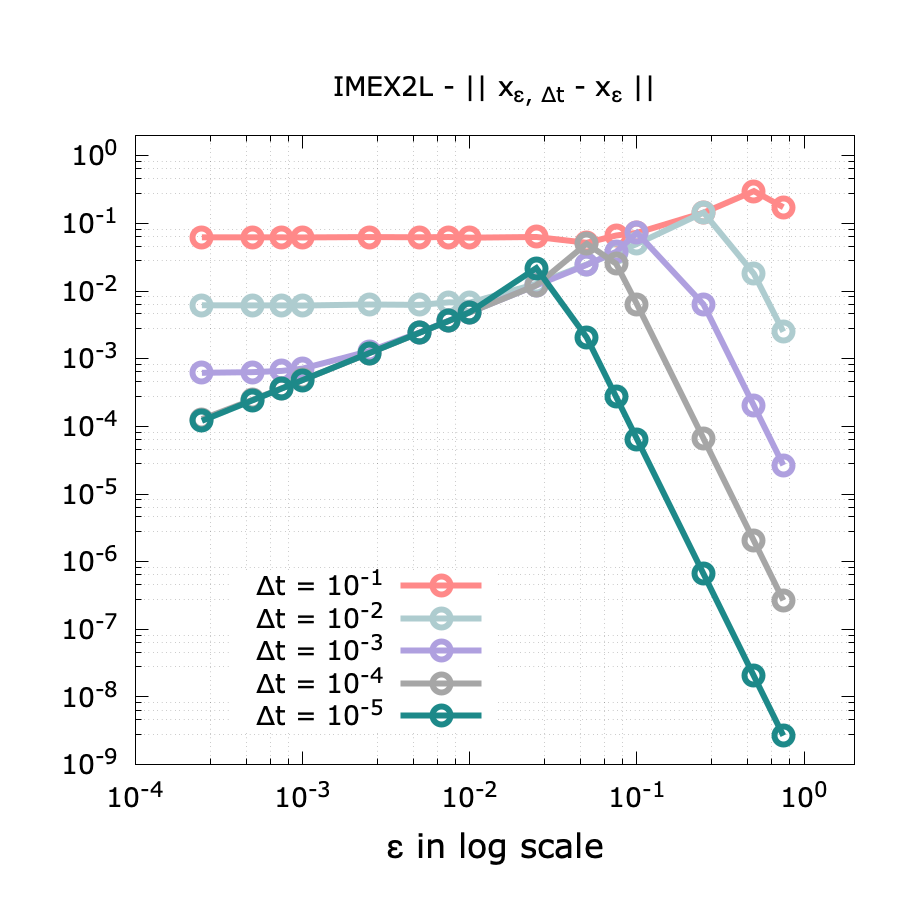}}
       	{\includegraphics[width=0.49\linewidth]{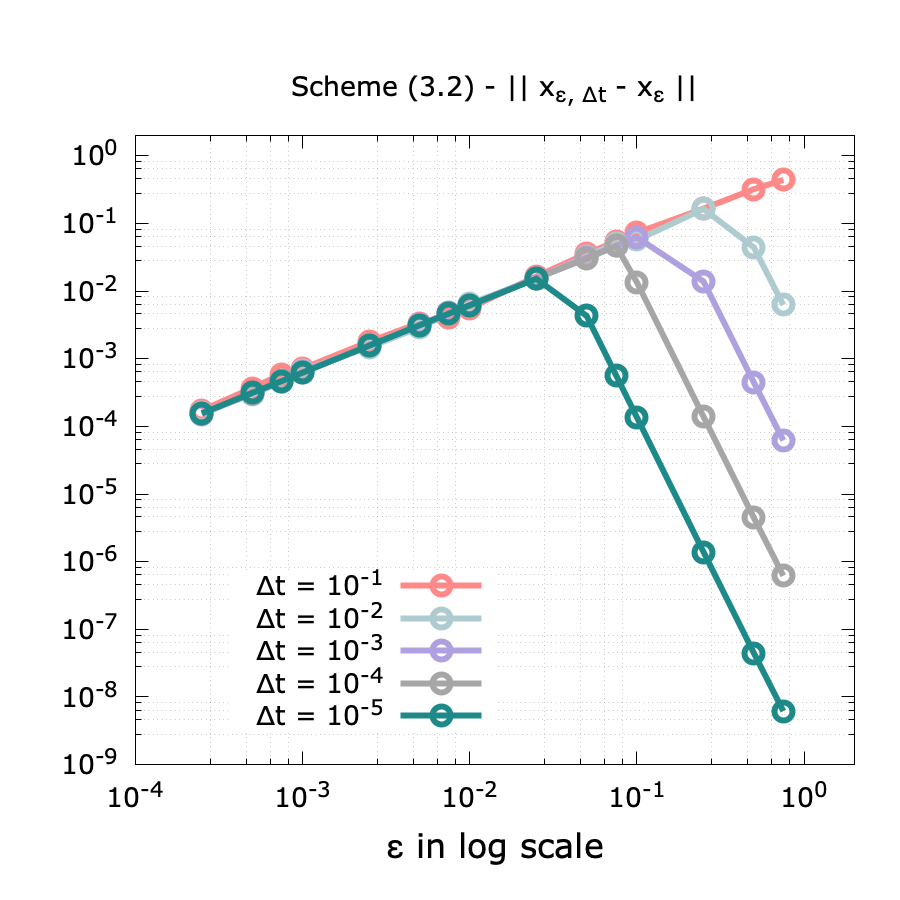}}
	\caption{\textbf{One single particle motion:} Numerical errors of discrete solution $\mathbf{x}_{\eps, \Delta t}$, approximated by several schemes: \eqref{scheme:Brackbill}, \eqref{scheme:Chacon}, IMEX2L and \eqref{scheme:modified_CN}, with reference solution $\mathbf{x}_\eps$ of \eqref{eq:ODE_system} for various $\eps > 0$ and $\Delta t > 0$.}
	\label{Fig:Fig1}
\end{figure}
\begin{figure}
	\centering	
	{\includegraphics[width=0.49\linewidth]{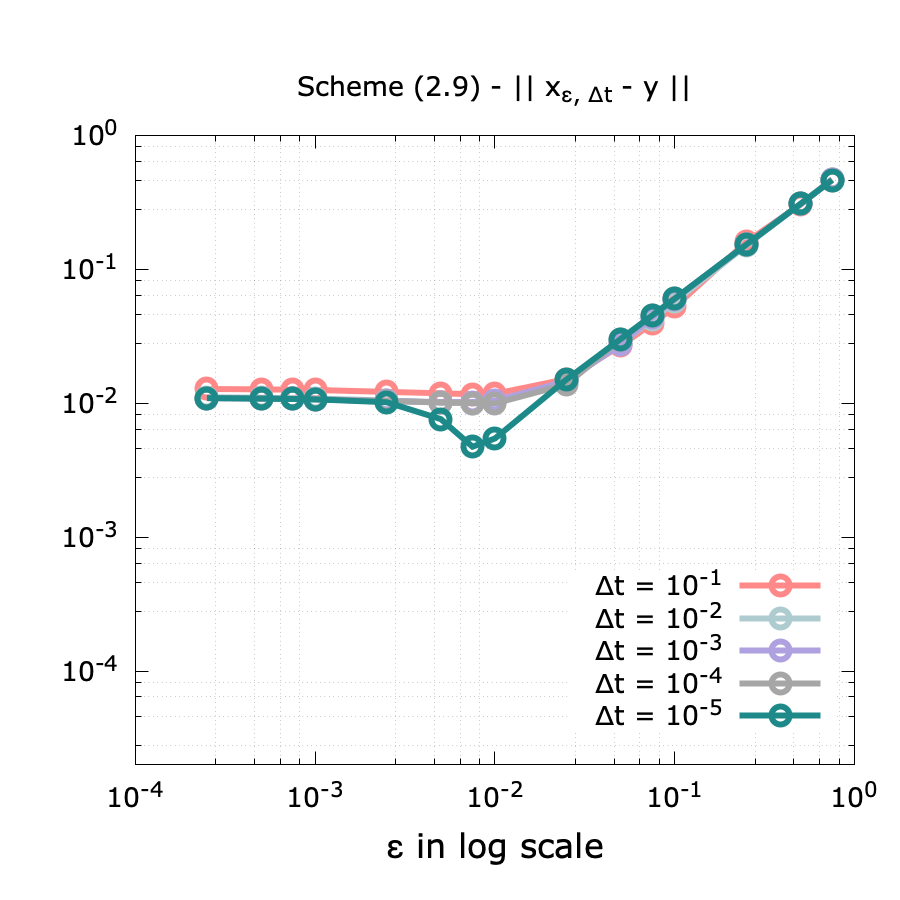}}
	{\includegraphics[width=0.49\linewidth]{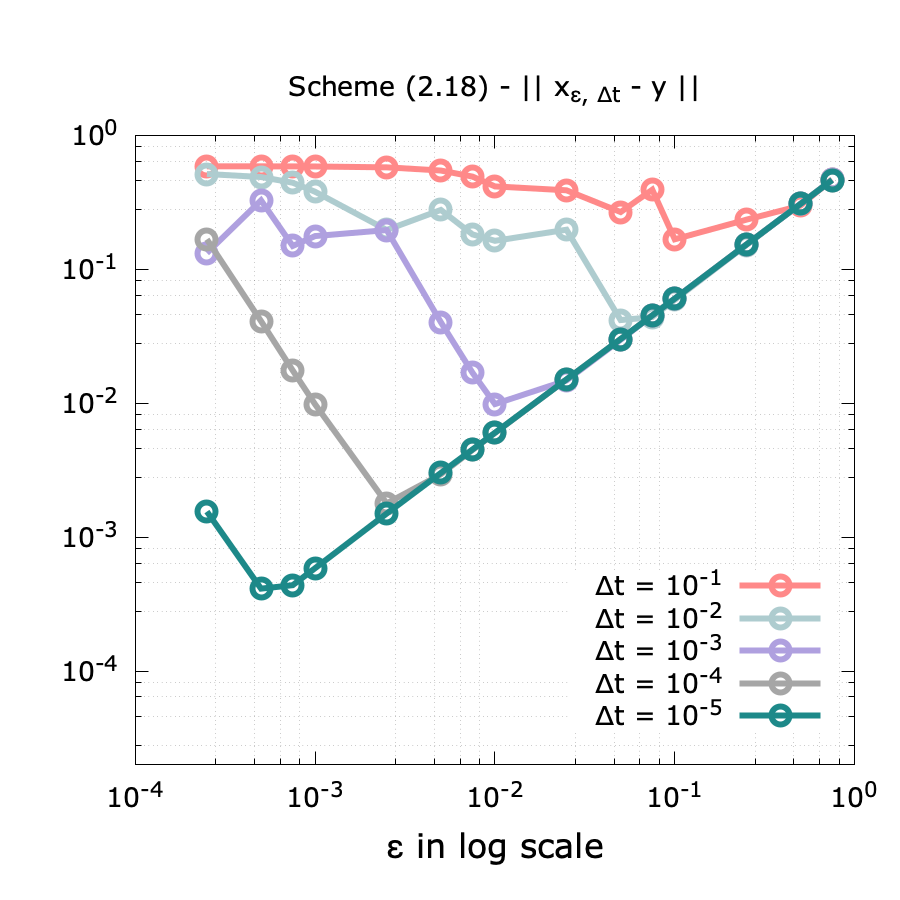}}
	{\includegraphics[width=0.49\linewidth]{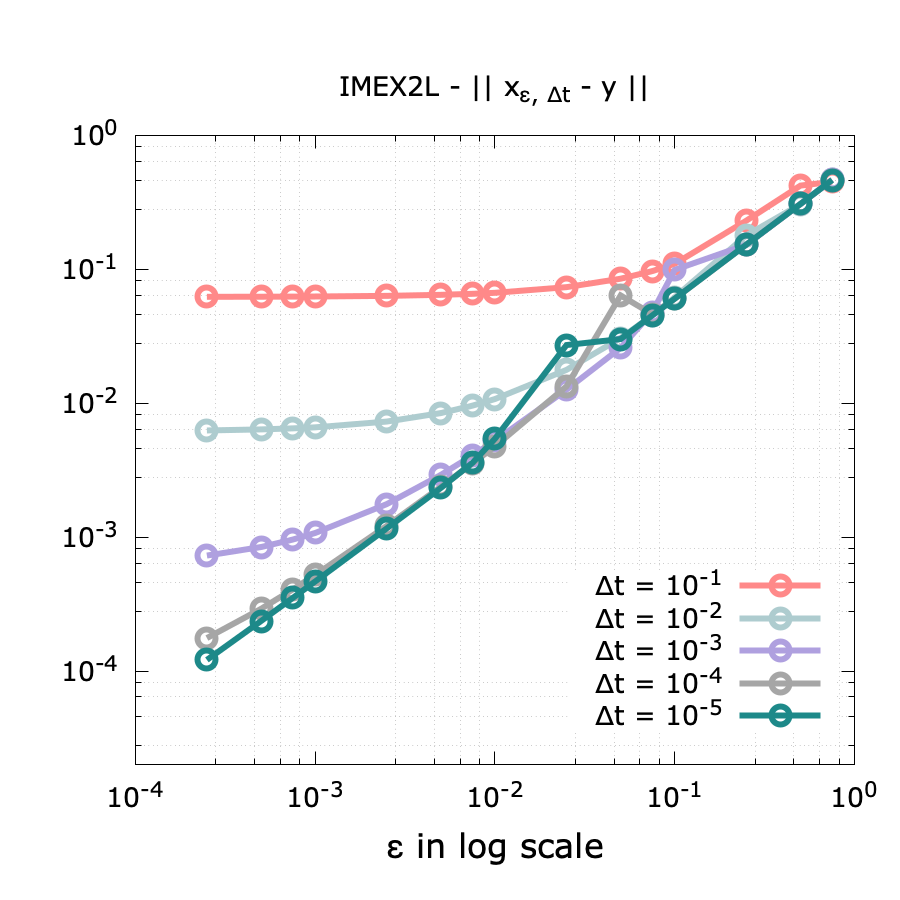}}
       	{\includegraphics[width=0.49\linewidth]{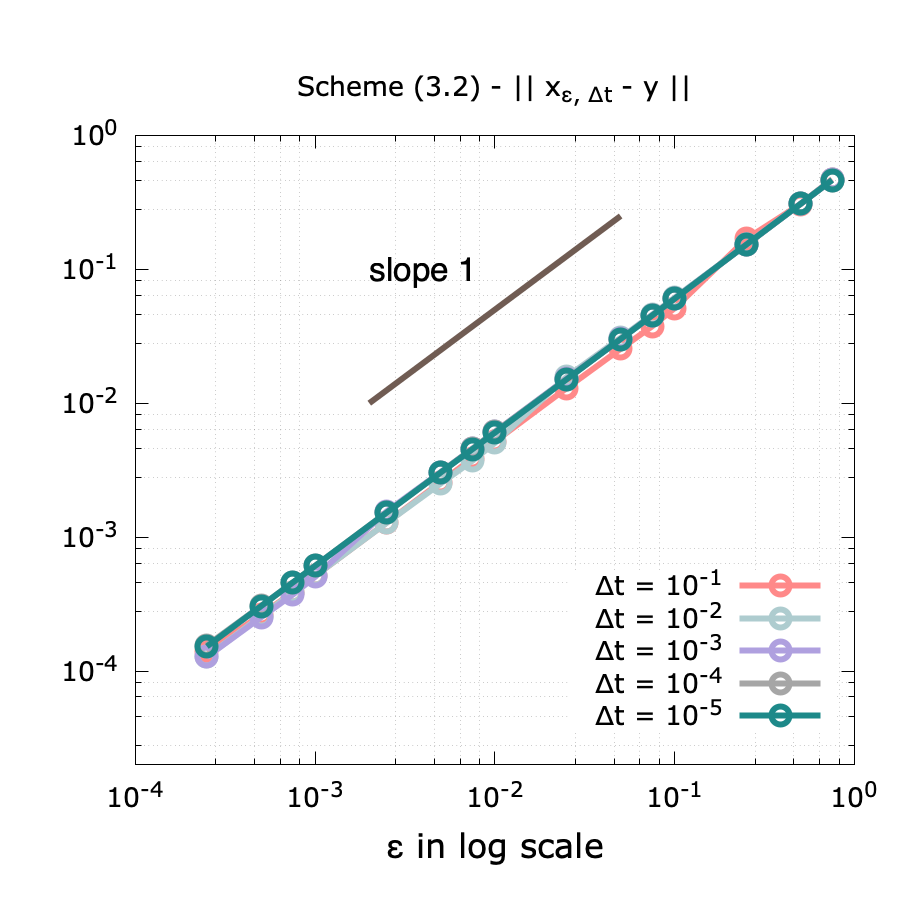}}
	\caption{\textbf{One single particle motion:} Numerical errors of discrete solution $\mathbf{x}_{\eps, \Delta t}$, approximated by several schemes: \eqref{scheme:Brackbill}, \eqref{scheme:Chacon}, IMEX2L and \eqref{scheme:modified_CN}, with guiding center solution $\mathbf{y}$ of \eqref{eq:guiding_center_system} for various $\eps > 0$ and $\Delta t > 0$.}
	\label{Fig:Fig2}
\end{figure}
\begin{figure}
	\centering	
	{\includegraphics[width=0.49\linewidth]{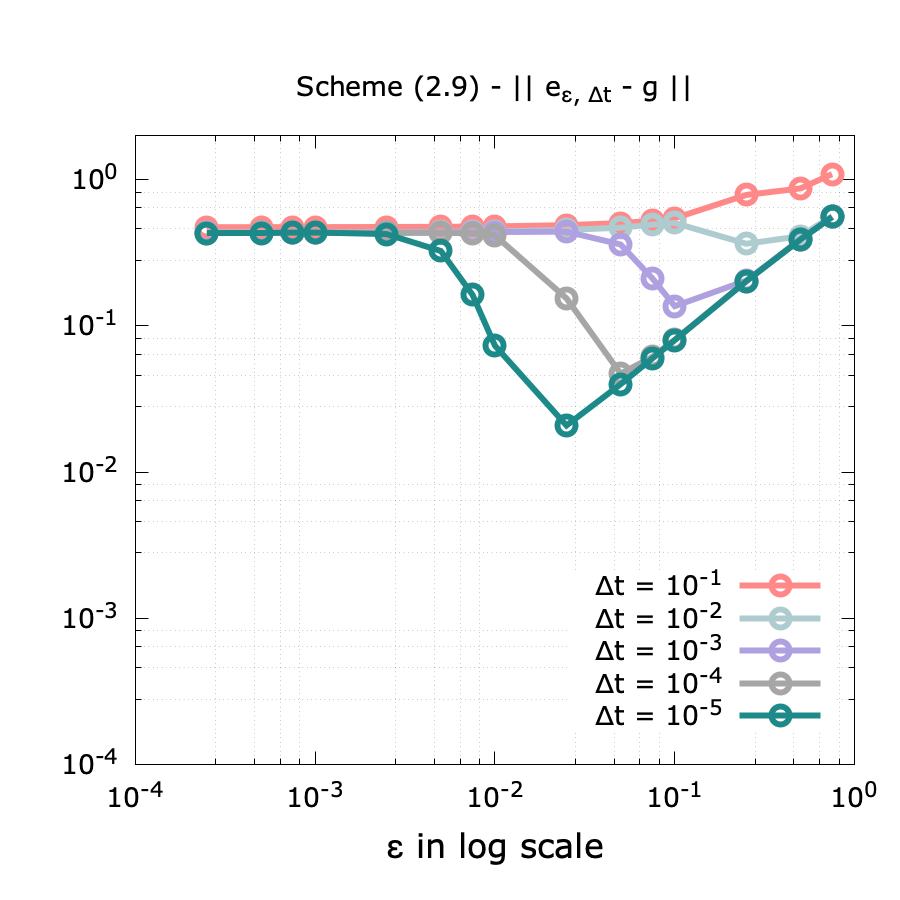}}
	{\includegraphics[width=0.49\linewidth]{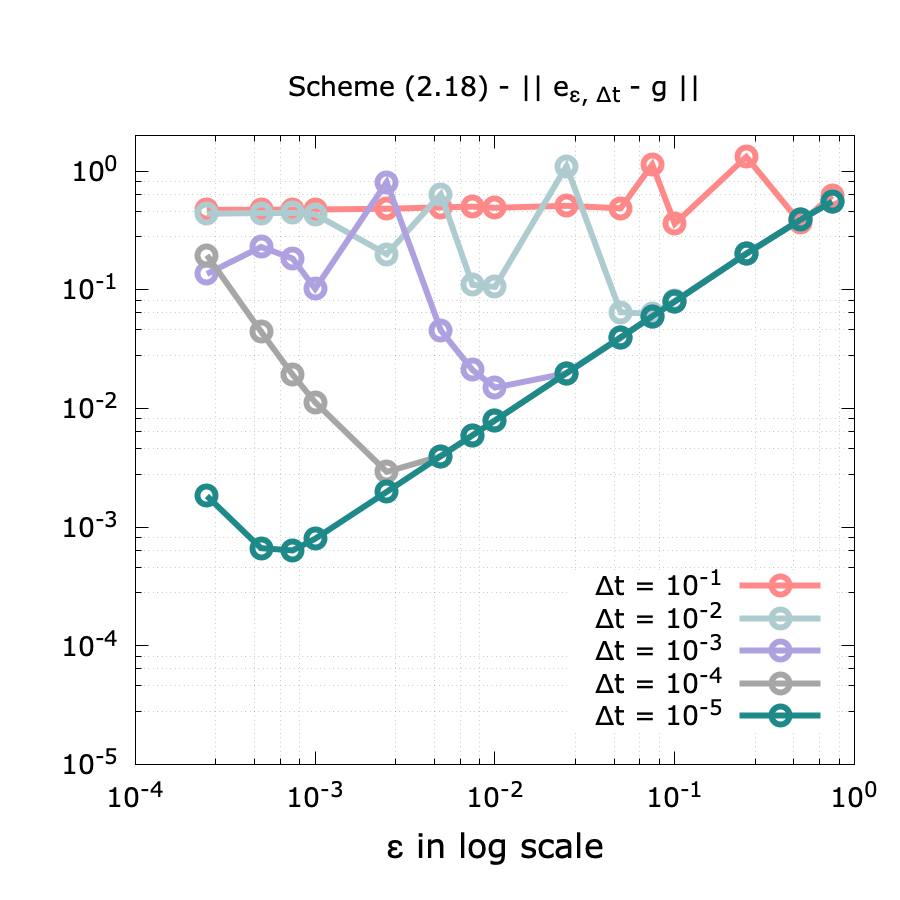}}
	{\includegraphics[width=0.49\linewidth]{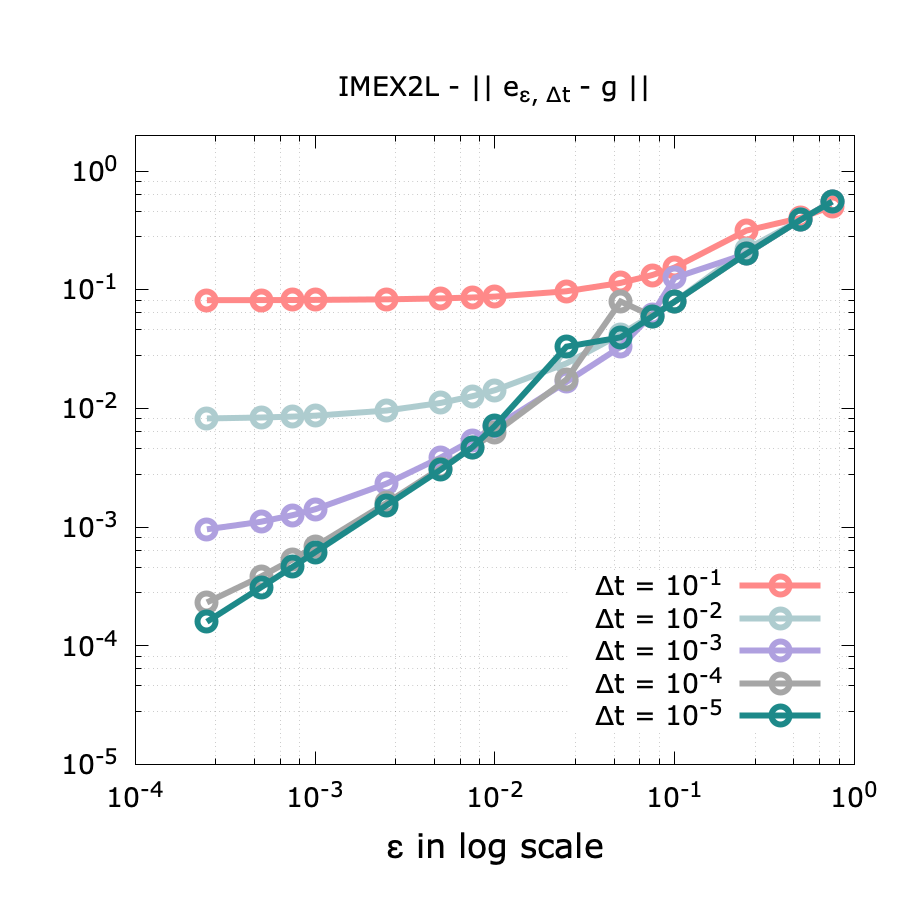}}
       	{\includegraphics[width=0.49\linewidth]{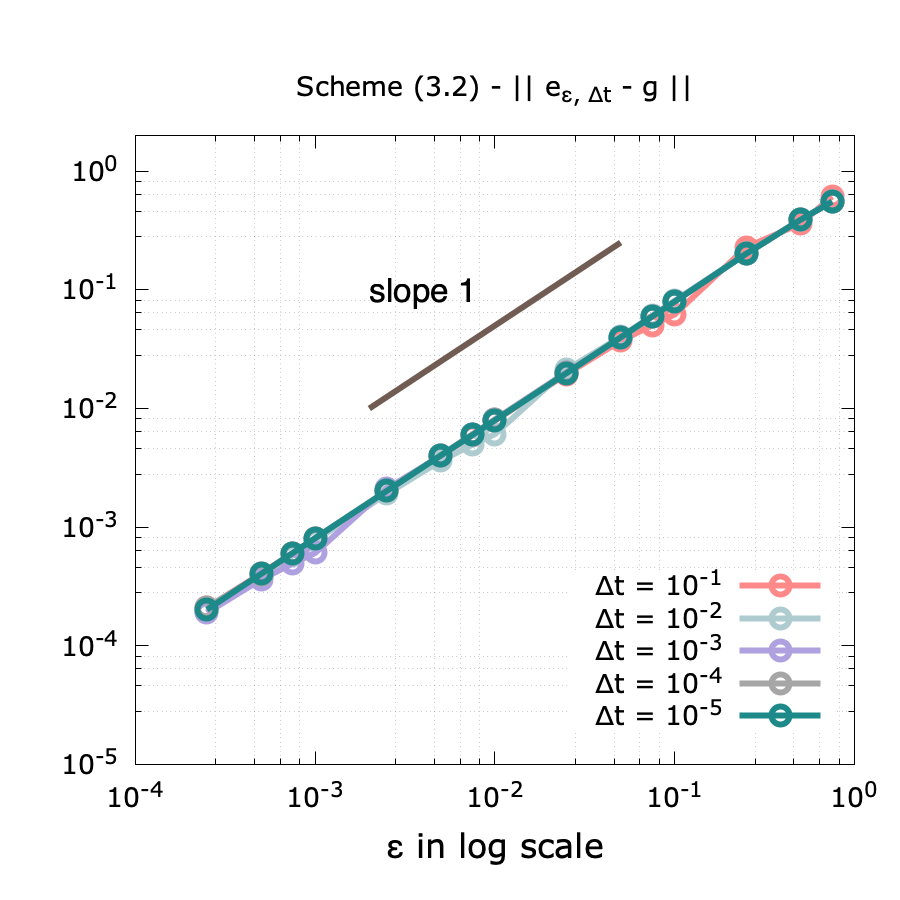}}
	\caption{\textbf{One single particle motion:} Numerical errors of discrete solution $e_{\eps, \Delta t}$, approximated by several schemes: \eqref{scheme:Brackbill}, \eqref{scheme:Chacon}, IMEX2L and \eqref{scheme:modified_CN}, with guiding center solution $g$ of \eqref{eq:guiding_center_system}  for various $\eps > 0$ and $\Delta t > 0$.}
	\label{Fig:Fig3}
\end{figure}

%%%%%%%%%%%%%%%%%%%%%%%%%%%%%%%%%%%%%%%%%%%%%%%%%%%%%%%

To illustrate this point,  we also present the space trajectories
corresponding to $\eps = 0.01$ and  $\Delta t = 0.1$ for large
time simulations (with $T = 30$). In Figure \ref{Fig:Fig4},  we
observe that the particle
trajectory  forms a circular motion under the effect of  both drifts
$\mathbf{E}^{\perp}/b$ and $\nabla_{\mathbf{x}}^{\perp} b/b^{2}$, which
appear explicitly in the asymptotic model \eqref{eq:guiding_center_system}.  The scheme
\eqref{scheme:modified_CN} provides an approximation close to the
reference solution, while other schemes fail to get  the correct position
of the particle, since they do not capture correctly the drift
$\nabla_{\mathbf{x}}^{\perp} b/b^{2}$ in the limit $\eps \rightarrow 0$.

\begin{figure}
	\centering	
    	{\includegraphics[width=0.49\linewidth]{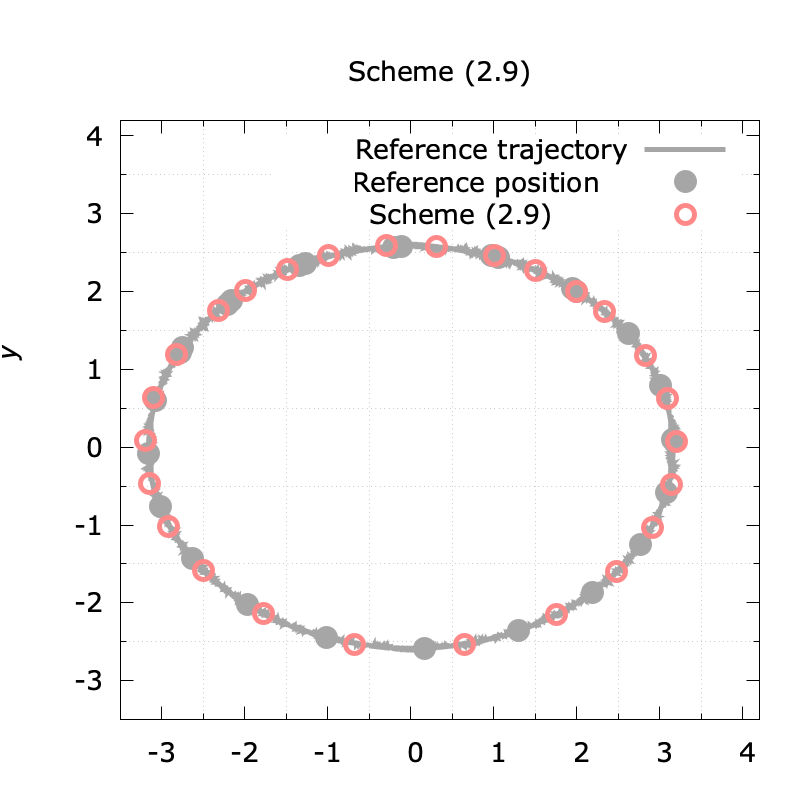}}
	   {\includegraphics[width=0.49\linewidth]{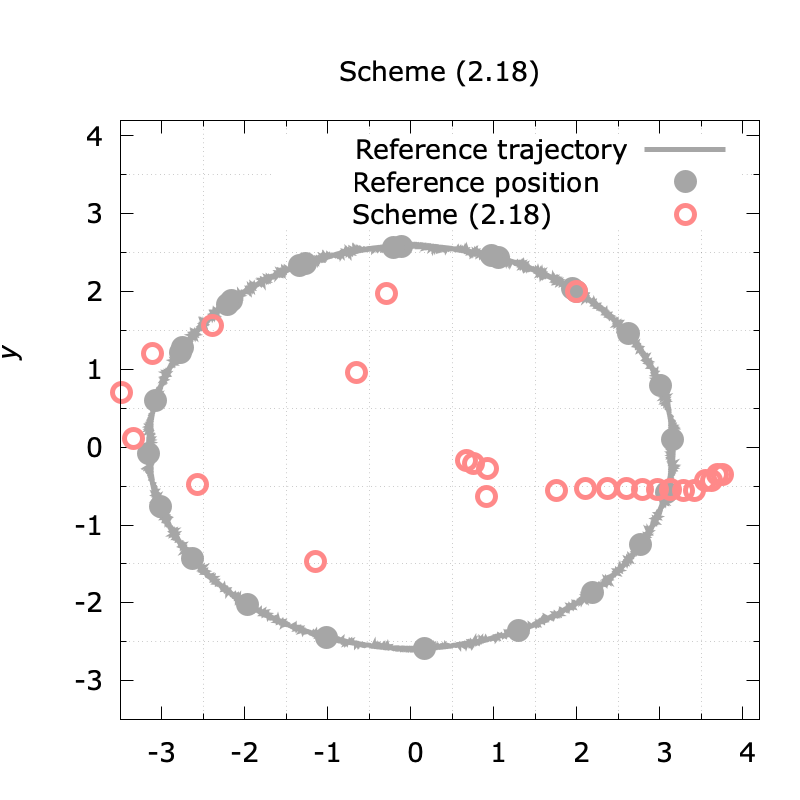}}
	   {\includegraphics[width=0.49\linewidth]{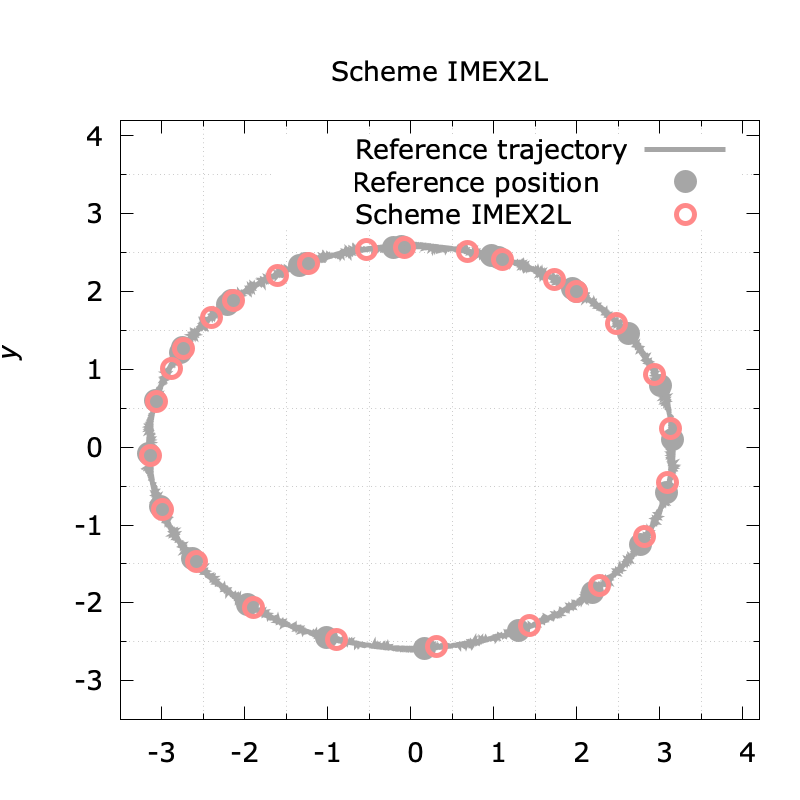}}
       	{\includegraphics[width=0.49\linewidth]{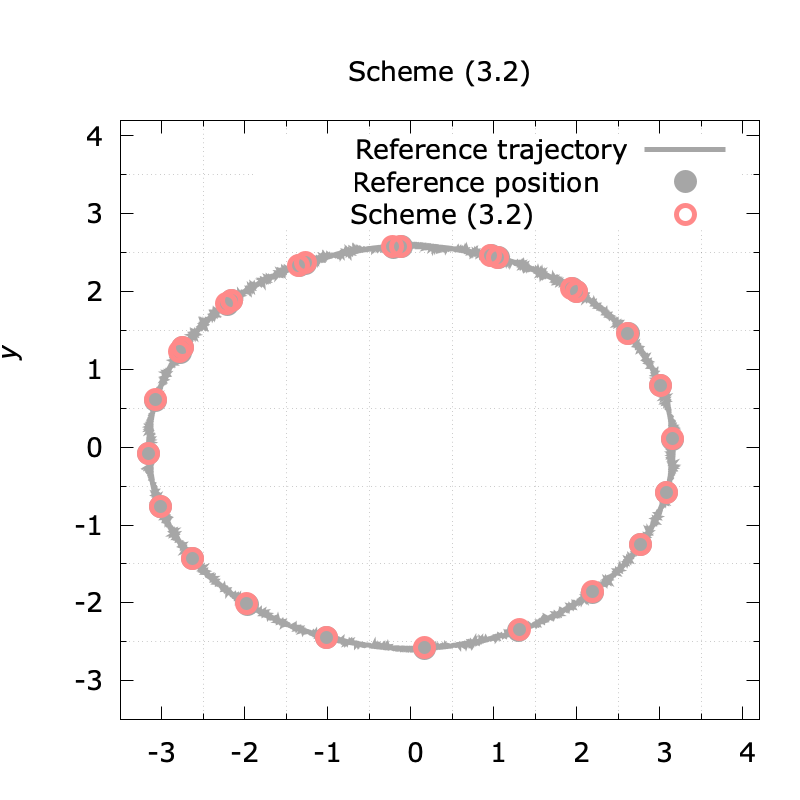}}
	\caption{\textbf{One single particle motion:} Trajectory of particle approximated by several schemes: \eqref{scheme:Brackbill}, \eqref{scheme:Chacon}, IMEX2L and \eqref{scheme:modified_CN} with $\eps = 0.01$,  $\Delta t = 0.1$ and final time $T = 30s$.}
	\label{Fig:Fig4}
\end{figure}

Furthermore, from the results presented in Figure
\ref{Fig:Fig5}, we observe that the IMEX2L and
\eqref{scheme:modified_CN} schemes accurately track the variations in kinetic and potential energy over a long period, unlike
\eqref{scheme:Brackbill} and \eqref{scheme:Chacon}.

\begin{figure}
	\centering	
    	{\includegraphics[width=0.49\linewidth]{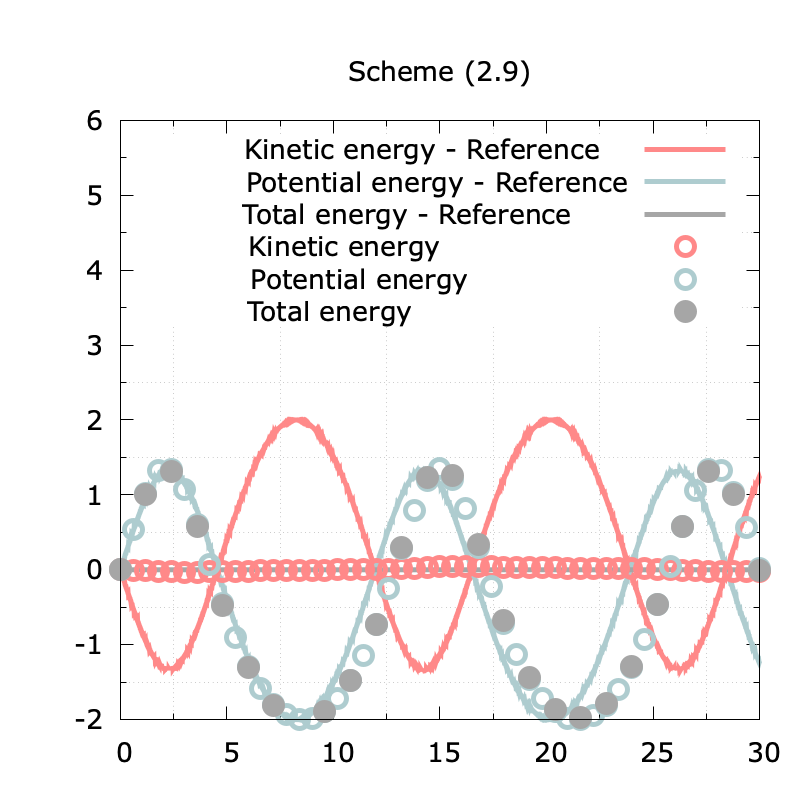}}
	{\includegraphics[width=0.49\linewidth]{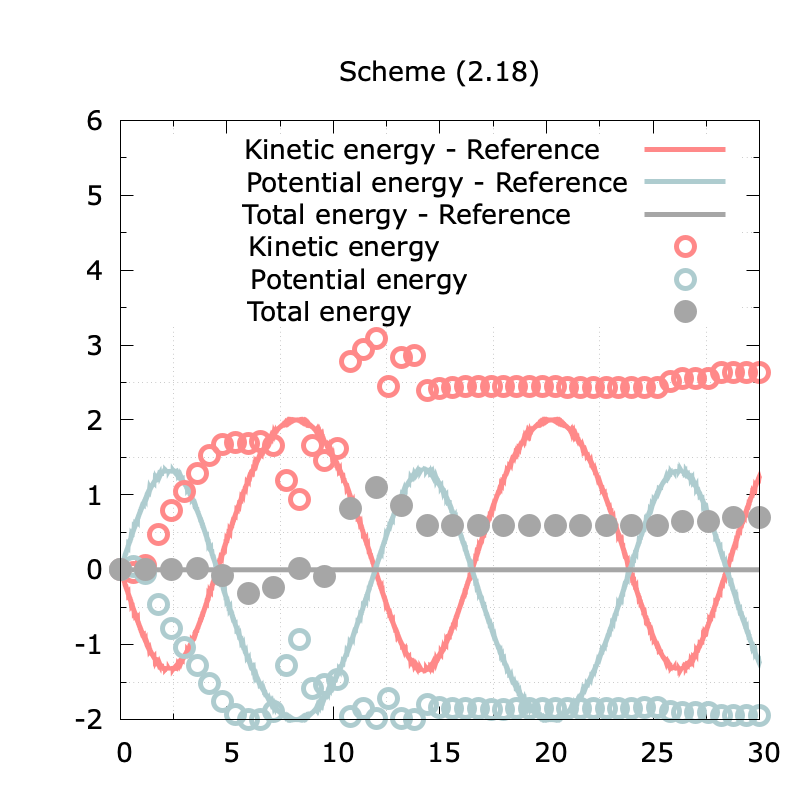}}
	{\includegraphics[width=0.49\linewidth]{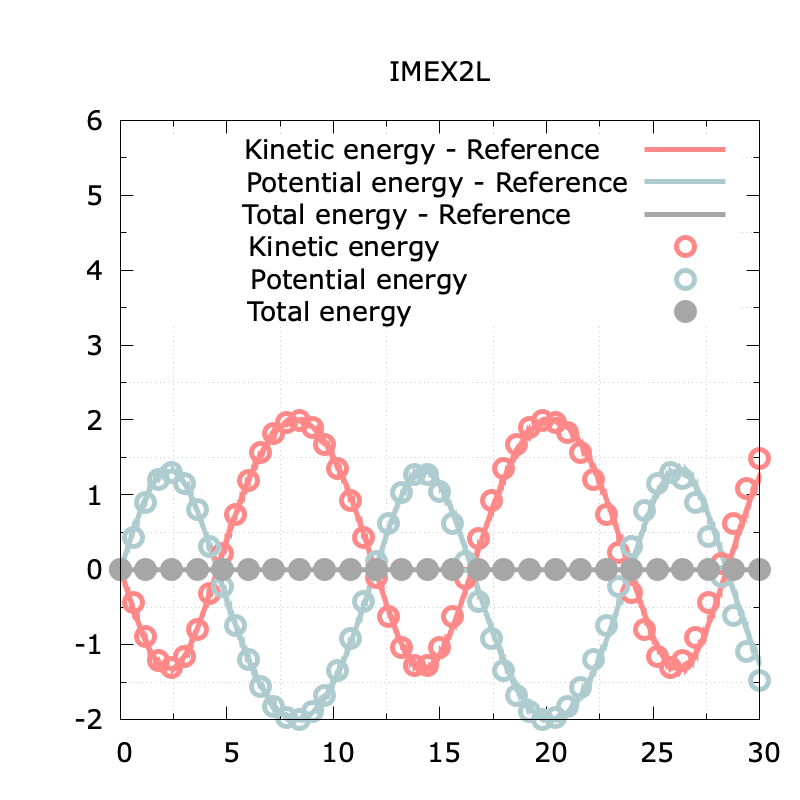}}
       	{\includegraphics[width=0.49\linewidth]{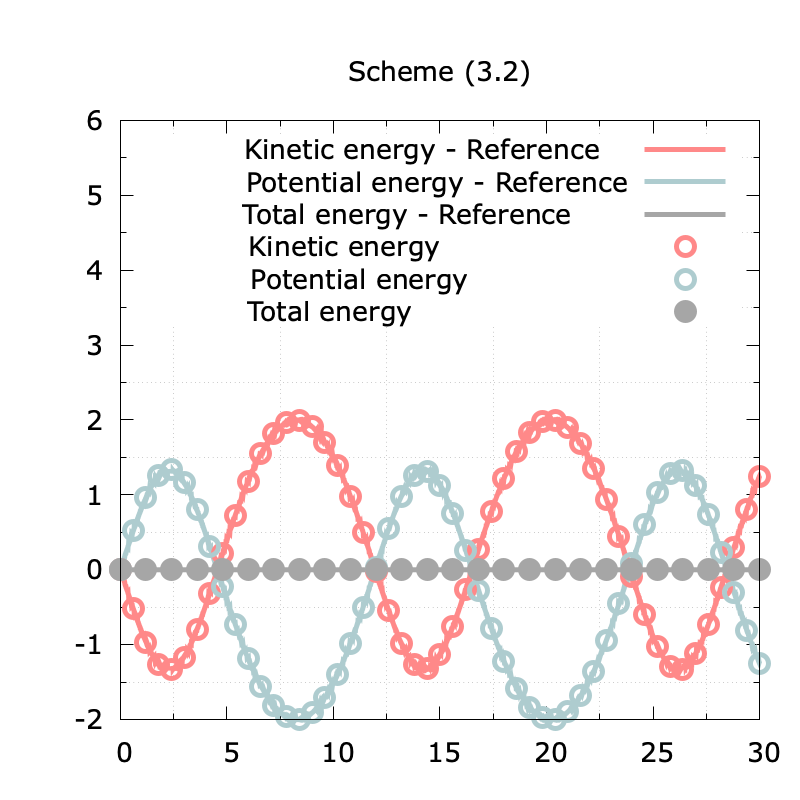}}
	\caption{\textbf{One single particle motion:} Variation of energy approximated by several schemes: \eqref{scheme:Brackbill}, \eqref{scheme:Chacon}, IMEX2L and \eqref{scheme:modified_CN} with $\eps = 0.01$,  $\Delta t = 0.1$ and final time $T = 30s$.}
	\label{Fig:Fig5}
\end{figure}

Finally, the time evolution of the magnetic moment approximation
is presented and compared with a reference solution in Figure \ref{Fig:Fig6}. Let us emphasize that only the
modified scheme \eqref{scheme:modified_CN} accurately describes the
amplitude of the fast oscillations of $\mu_\eps$ compared to the other
schemes. Indeed, even if the time step $\Delta t$ is much larger than
the fastest time scale of order $\eps^2$, the quantity $\mu_{\eps,
  \Delta t}$ oscillates with the correct  amplitude of order
$\eps^2$. 

\begin{figure}
	\centering	
    	{\includegraphics[width=0.49\linewidth]{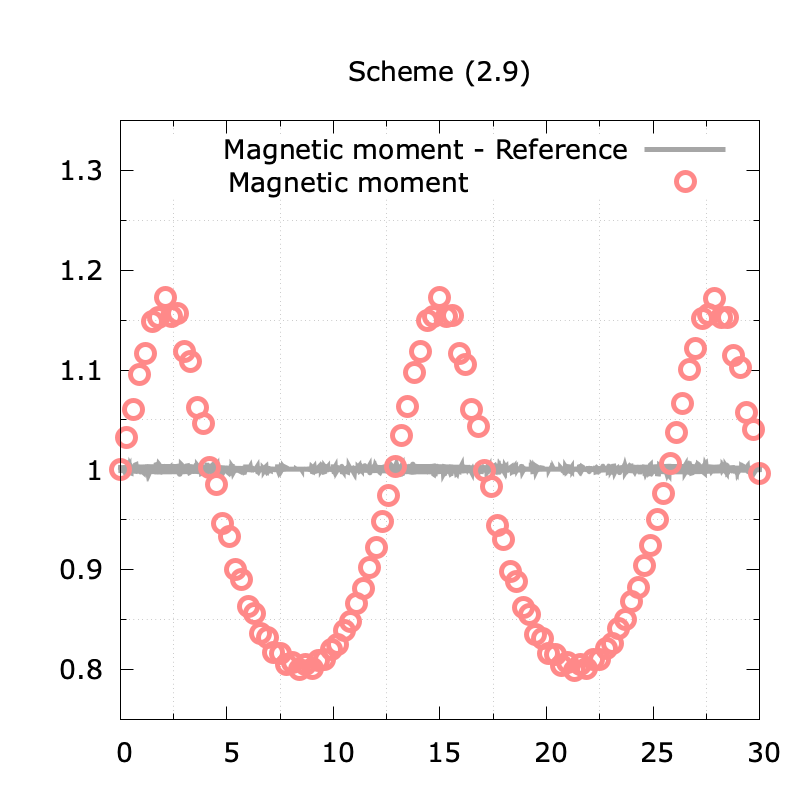}}
	{\includegraphics[width=0.49\linewidth]{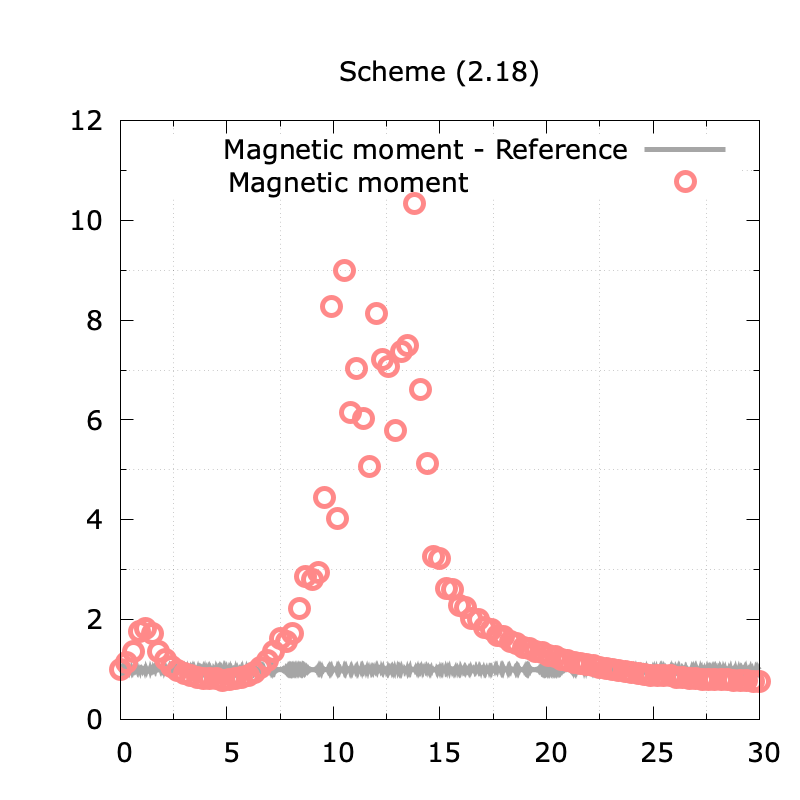}}
        {\includegraphics[width=0.49\linewidth]{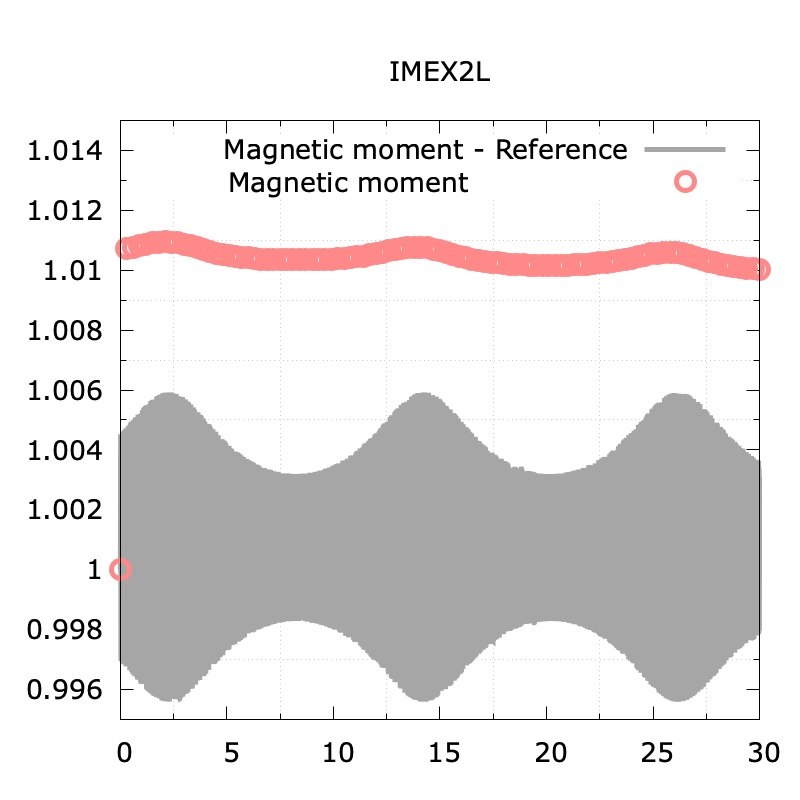}}
       	{\includegraphics[width=0.49\linewidth]{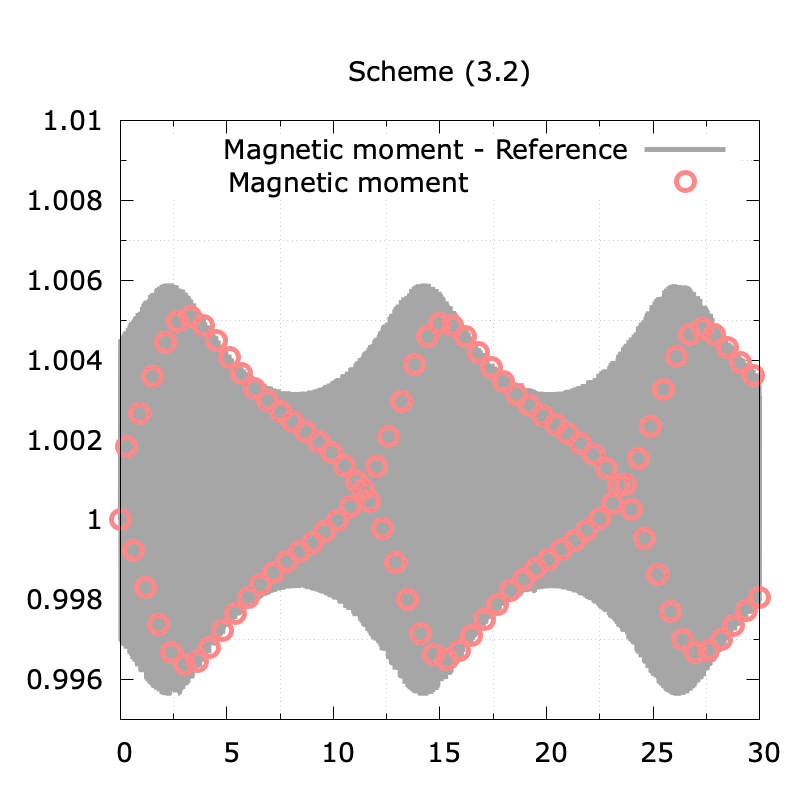}}
	\caption{\textbf{One single particle motion:} Evolution of
          magnetic moment approximated by several schemes: \eqref{scheme:Brackbill}, \eqref{scheme:Chacon}, IMEX2L and \eqref{scheme:modified_CN} with $\eps = 0.01$,  $\Delta t = 0.1$ and final time $T = 30s$.}
	\label{Fig:Fig6}
\end{figure}

These numerical results on the single particle motion, clearly show
that the modified
Crank-Nicolson scheme  \eqref{scheme:modified_CN} is much more accurate than 
the schemes \eqref{scheme:Brackbill}, \eqref{scheme:Chacon} or IMEX2L,
even when  $\eps \ll 1$,  with a \textcolor{red}{fixed} time step $\Delta t$
independent of $\eps$. These elementary
numerical simulations confirm the ability of the modified
Crank-Nicolson scheme \eqref{scheme:modified_CN} to capture the
evolution of the "slow" variables $(\bx_\eps, e_\eps)$ uniformly with
respect to $\eps$ by essentially transitioning automatically to the
guiding center motion \eqref{eq:guiding_center_system} \textcolor{red}{when it is not
able anymore to follow} the fast oscillations of the initial system.

\textcolor{red}{
 Let us remind that the scheme proposed in \cite{RiCh20} uses both an effective force
and an adaptive time-stepping procedure as $\varepsilon$ becomes
small. Our goal here is is to deepen the understanding of the
underlying issues, particularly by distinguishing the effects of the
adaptive time step procedure from those of using an effective force.
In particular, we illustrate the theoretical results obtained in
Sections \ref{sec:Review_schemes} and \ref{sec:Modified_CN}  and demonstrate that the inclusion of a slow variable allows
us to dispense with the adaptive time step procedure, enabling the use
of an arbitrarily large time step independent of
$\varepsilon$. Applying the adaptive time step procedure allows
significantly to decrease the error when $\eps$ goes to zero, but it
requires a small time step depending on $\eps$, which affects the
computational cost. }

\textcolor{red}{
To end this section dedicated to comparing the various numerical
schemes, Table \ref{Tab:Tab2} presents the computational cost of the IMEX2L scheme,
which does not require an iterative method, alongside that of the
modified Crank-Nicolson scheme (3.2). Since the number of iterations
does not exceed 5 at each time step for all $\Delta t$ and
$\varepsilon$, it is observed that the computational time for scheme
(3.2) is more than twice that of the IMEX2L scheme. However, the error
associated with \eqref{scheme:modified_CN}   is significantly smaller, especially when
$\varepsilon \ll 1$.}
%\iffalse
\begin{table}
\begin{center}
 % {\color{black}
 %   \rule{\linewidth}{0.5mm}}
  \begin{tabular}{| m{9.em} | m{6.em} |m{6.em} | m{6.em} |  m{6.em} |} 
\hline 
    & $\Delta t = 10^{-1} $ &  $\Delta t = 10^{-2} $  &  $\Delta t = 10^{-3} $ &  $\Delta t = 10^{-4} $\\ 
  \hline
  $\eps = 10^{-1}$  & & & & \\
  \hline
  Scheme  \eqref{scheme:modified_CN} &5 &34 &314 &2572 \\ 
  \hline
  IMEX2L &2 &17 &176 &1663 \\ 
  \hline
  \hline
  $\eps = 10^{-2}$ & & & &\\ 
  \hline
  Scheme  \eqref{scheme:modified_CN}  &5 &65 &289 &3399\\ 
  \hline
  IMEX2L &2 &17 &177 &1803 \\ 
  \hline
  \hline
  $\eps = 10^{-3}$ & & & &\\ 
  \hline
  Scheme  \eqref{scheme:modified_CN} &5 &31 &265 &2617 \\ 
  \hline
  IMEX2L &2 &17 &183 &1656 \\ 
\hline
  \end{tabular}
%{\color{black} \rule{\linewidth}{0.5mm}}
 \caption{Comparison of the computational time (microsecond) of the scheme \eqref{scheme:modified_CN} with IMEX2L, with final time $T = 1$ for various $\eps > 0$ and $\Delta t > 0$.}
 \label{Tab:Tab2}
\end{center}
\end{table}
%\fi

%\textcolor{red}{In Table \ref{Tab:Tab1}, we observe that the
%scheme\eqref{scheme:modified_CN} is more expensive, costing 2-5 times more than the IMEX2L method (2 iterations).}

Let us conclude this section with an important remark about strongly
oscillating fields.
\textcolor{red}{
\begin{remark}
	The scheme \eqref{scheme:modified_CN} and the strategy
        proposed in  \cite{FiRo20}  can also
        be  applied to the case where the electric field is highly
        oscillatory, that is, $\| \partial_{t} \mathbf{E} \| / \|
        \mathbf{E} \| \,=\, \mathcal{O}(1 / \eps)$. In  such a
        situation, the asymptotic limit remains given by
        \eqref{eq:guiding_center_system}, and the scheme
        \eqref{scheme:modified_CN} can be applied directly. Indeed,  we  performed
        numerical simulations  (not presented in this paper) with a
        potential
        $$
\phi(t,\bx) \,=\, \frac{1}{2}\,\cos\left(\frac{t}{\eps}\right) \, \|\bx\|^2               
        $$
        and obtained the same error curves  as those presented in
        this section.
      \end{remark}}

%%%%%%%%%%%%%%%%%%%%%%%%% Sub section %%%%%%%%%%%%%%%%%%%%%%%%%%%%%
\subsection{Vlasov-Poisson system}
 \textcolor{red}{ We now consider the Vlasov-Poisson system \eqref{eq:VP_system} on a
domain $\Omega \subset \mathbb{R}^{2}$, where $\Omega$ is given either
by a disk or a D shape domain (see Figure \ref{Fig:Fig65}). }

\begin{figure}
	\centering	
    	{\includegraphics[width=0.49\linewidth]{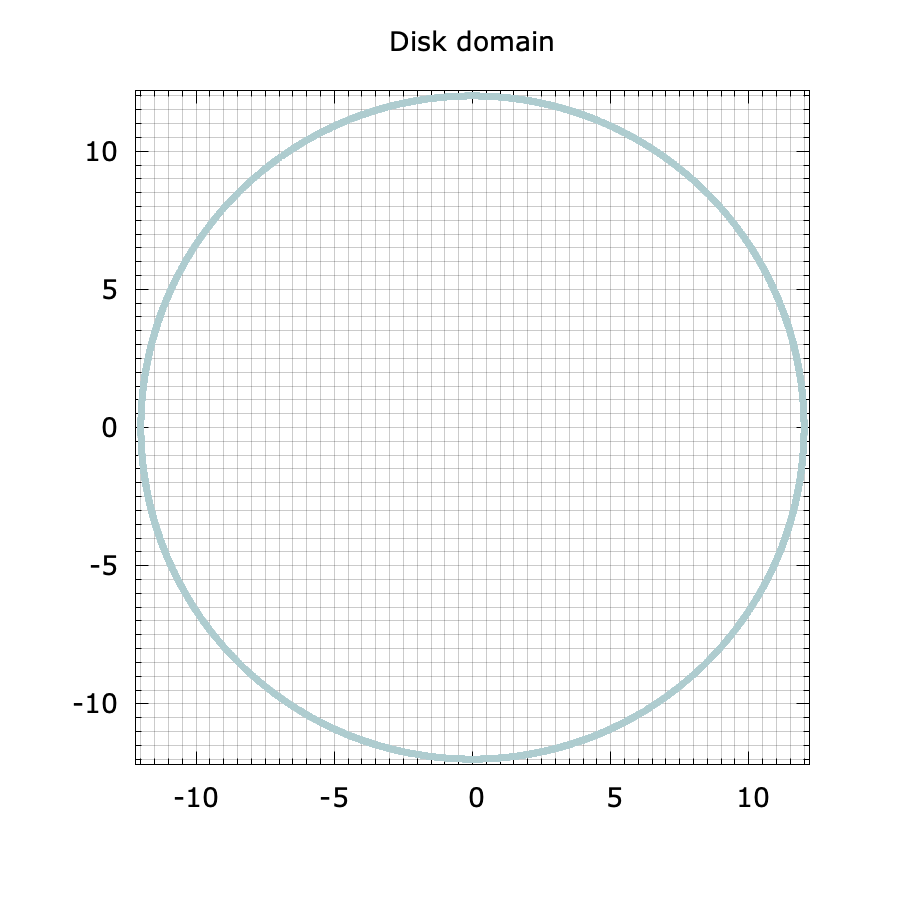}}
	{\includegraphics[width=0.49\linewidth]{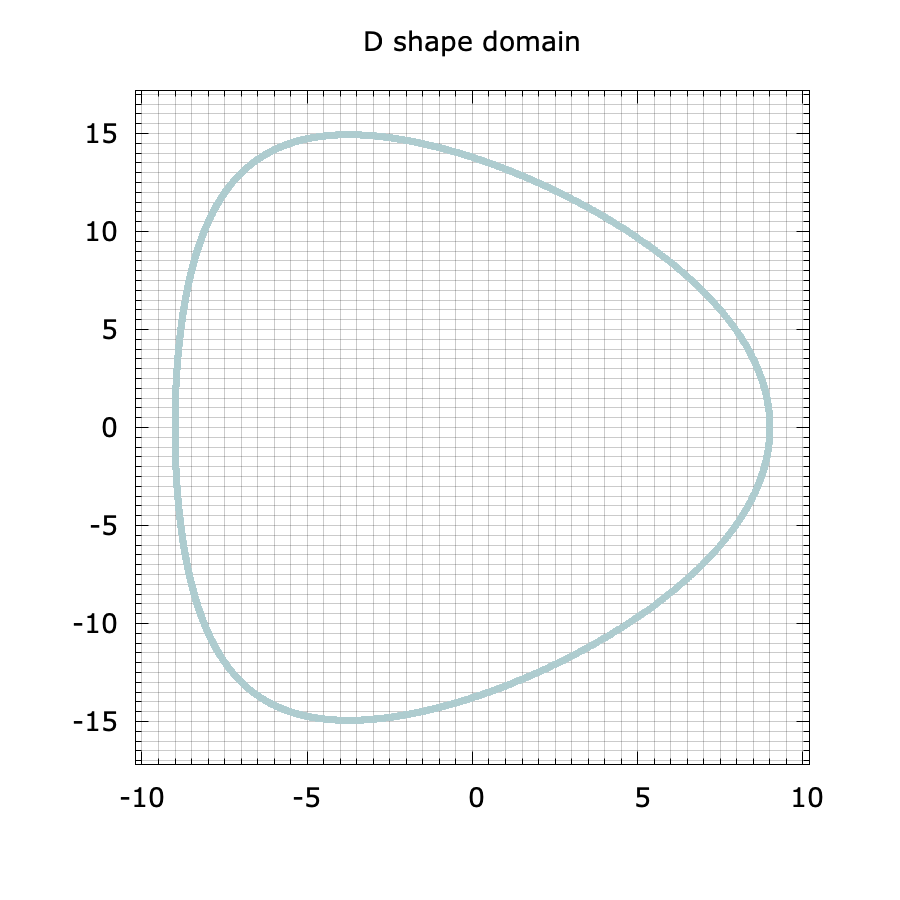}}
	\caption{Disk domain (left) and D-Shape domain (right) embedded in uniform Cartersian grid .}
	\label{Fig:Fig65}
\end{figure}

\textcolor{red}{ Assuming that the density is
concentrated far from the boundary,  we choose to
  remove particles which are located outside the physical domain, this may induce a lack of mass conservation. 
However, thanks to the strong confinement of the magnetic field, we do not observe this situation in
the present simulations. For the potential $\phi_\eps$, the Poisson equation is
solved with homogeneous Dirichlet boundary conditions using a
classical five points second order  finite
difference method with ghost points to take into account the effect of
the boundary conditions \cite{FY14}. }
\\

For each time $t\geq 0$, we may define the total energy $\cE_\eps(t)$ as
\begin{flalign}
	\label{Total_energy_VP}
	\cE_\eps(t)\,\, :=\,\, \bK_\eps(t) \,+\, \bU_\eps(t) , 
\end{flalign}
where  the kinetic energy  $\bK_\eps(t)$ and the
potential energy  $\bU_\eps(t)$  are given by
\[
\left\{
  \begin{array}{l}
    \ds\bK_\eps(t) \, := \, \frac{1}{2}\int \limits_{\Omega} \int \limits_{
    \mathbb{R}^{2}} f_{\eps} (t, \mathbf{x}, \mathbf{v}) \, \|\mathbf{v}
    \|^{2} \, \dD\mathbf{v} \,\dD \mathbf{x}, \\[0.9em]
    \ds\bU_\eps(t) \, := \,  \frac{1}{2} \int \limits_{\Omega} \|
    \mathbf{E}_{\eps}(t, \mathbf{x}) \|^{2} \, \dD \mathbf{x}\,.
    \end{array}\right.
\]
Assuming that the distribution function is compactly supported in the
open set $\Omega$,  the total energy  $\cE_\eps(t)$ is conserved
for all  time $t\geq 0$.

We also define the magnetic moment for the Vlasov-Poisson system \eqref{eq:VP_system}  given by
\begin{flalign} 
	\label{mu_VP}
	\mu_\eps(t) \,:=\, \frac{1}{2}\,\int \limits_{\Omega} \int
        \limits_{ \mathbb{R}^{2}} f_{\eps} (t, \mathbf{x}, \mathbf{v})
        \dfrac{\| \mathbf{v} \|^{2}}{ b(\mathbf{x})} \,\dD \mathbf{v} \,\dD
        \mathbf{x}, \qquad t\geq 0
\end{flalign}
and expect that $\mu_\eps(t)$ is an invariant in time in the asymptotic limit
$\eps \rightarrow 0$ for the limit model \eqref{eq:guiding_center_system}.

For this section,  we performed numerical experiments using the
modified Crank-Nicolson scheme \eqref{scheme:modified_CN} to approximate the particles
trajectory corresponding to the Vlasov equation.  Despite the fact that the modified Crank-Nicolson scheme
\eqref{scheme:modified_CN} does not conserves exactly the total energy, we
expect that its variations are of order $\cO(\Delta t^2)$ even when $\eps$ tends to zero.  Furthermore,  the modified Crank-Nicolson scheme
\eqref{scheme:modified_CN}  should capture correctly  the asymptotic
limit $\eps \rightarrow 0$, as it has been  shown for the single
particle motion. 

\subsubsection{Diocotron instability} \label{Dio_Disk}
We first consider Vlasov-Poisson system \eqref{eq:VP_system} set in a
disk $\Omega = D(0,12)$ centered at the origin with a  radius
$R=12$ \textcolor{red}{ as in Figure \ref{Fig:Fig65}}. \textcolor{red}{Here, the Particle-In-Cell method is
  implemented with approximatively  100 particles per cell  on a
uniform grid of  the square $ (-12,12)^2$  with  $\Delta \bx = 0.1$.} The
simulation starts with a Maxwellian distribution in velocity, whose
macroscopic density is a perturbed  uniform distribution in an annulus. More precisely, we choose
\begin{equation*}
    f(0,\mathbf{x}, \mathbf{v}) \,=\, \dfrac{\rho_{0}(\mathbf{x})}{2 \pi} \,\exp \left(-\frac{\| \mathbf{v} \|^{2}}{2} \right),
\end{equation*}
where $\rho_{0}$ is given by
\begin{flalign*}
	\rho_{0}(\mathbf{x}) = 
	\left\{\begin{array}{l}
 	n_{0}(1 \,+\, \alpha \,\cos(7 \theta)),	\,\, {\rm for }\,\,6 \leq \| \mathbf{x} \| \leq 7\,, \\[0.9em]
 	0,	\,\, {\rm else,} 
	\end{array}\right. 
\end{flalign*}
in which $n_{0} = 0.25$, $\alpha = 0.001$, and the angle $\theta$ is
defined as $\theta = \arctan(x_{2}/x_{1})$ with $\mathbf{x} = (x_{1},
x_{2})$. Moreover, we consider strong external magnetic field which is given by
\begin{equation*} 
	b(t, \mathbf{x}) = \dfrac{20}{\sqrt{400 - \|\bx\|^{2}}}.
\end{equation*}

Since $b$ is not homogeneous, even in the asymptotic
regime the kinetic and potential parts of the total energy are not
preserved separately, but the total energy corresponding to the
Vlasov–Poisson system is still preserved. Figure \ref{Fig:Fig7} shows
that all these features are captured satisfactorily by the modified
Crank-Nicolson scheme \eqref{scheme:modified_CN}  even on long time
evolutions with a large time step $\Delta t = 0.1$ and small $\eps=
10^{-2}$.  On the one hand, the variations of the total energy have an amplitude of
order  $10^{-4}$, which is satisfying compared to the physical variations of
the potential and kinetic energy of order $2\times \,10^{-2}$. On the other
hand, the quantity $\mu_{\eps}$ also varies around $10^{-4}$, which
corresponds to the scale of $\eps^2=10^{-4}$. This \textcolor{red}{phenomenon} has already been
observed for the single particle motion and will be discussed below.

\begin{figure}
	\centering	
    	{\includegraphics[width=0.49\linewidth]{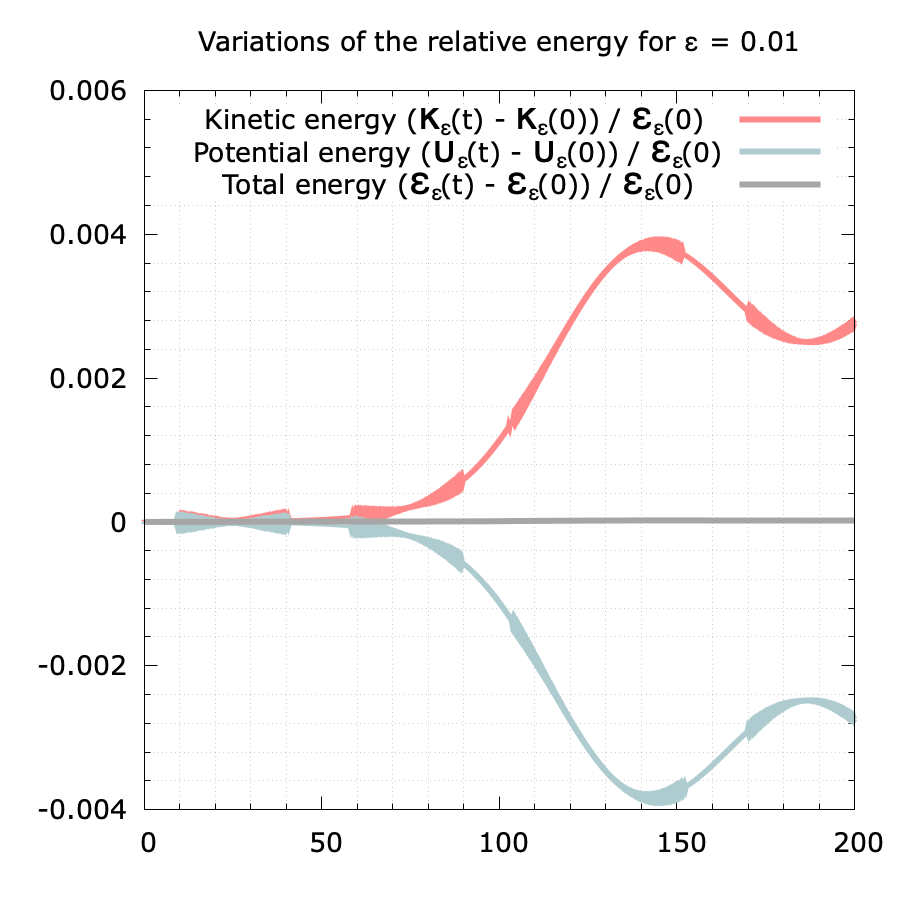}}
	{\includegraphics[width=0.49\linewidth]{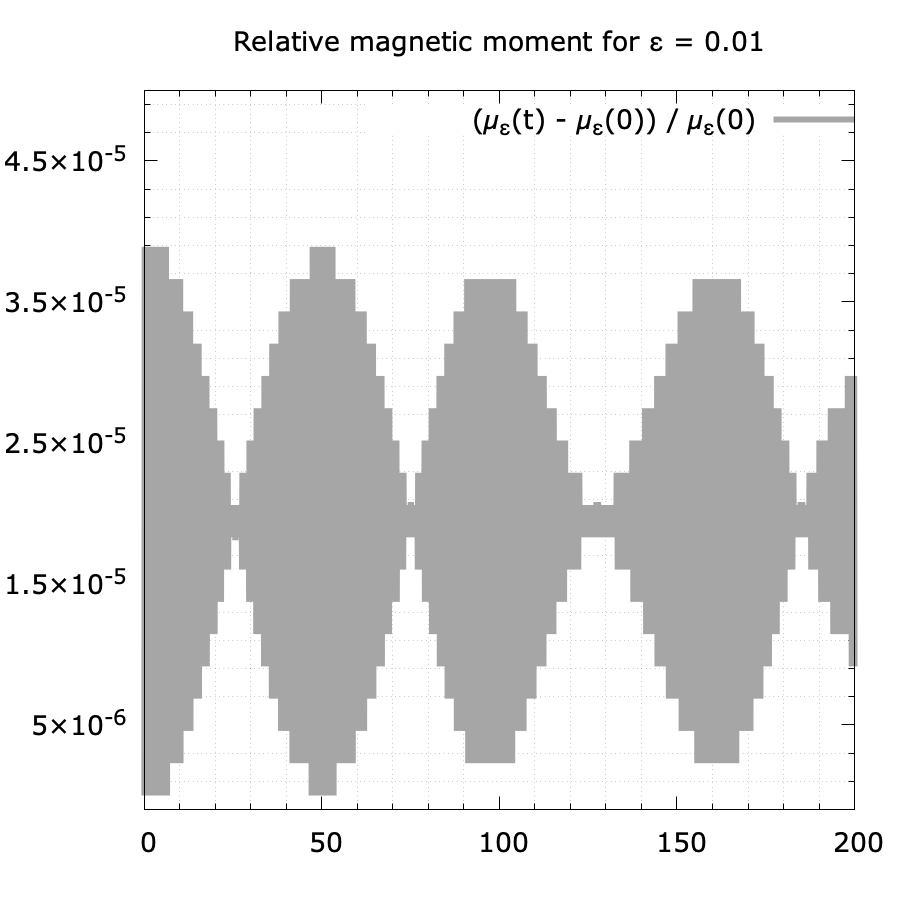}}
	\caption{\textbf{Diocotron instability:} Time evolution of the
          variations of the relative potential $\bU_\eps$ and kinetic energy $\bK_\eps$ (left)
          and magnetic moment $\mu_\eps$ (right) with
          $\eps=10^{-2}$ and $(\Delta
          t,\Delta \bx) = (0.1,0.1)$, using  the modified Crank-Nicolson
          scheme \eqref{scheme:modified_CN}.}
	\label{Fig:Fig7}
\end{figure}

In Figure \ref{Fig:Fig8}, we visualize the corresponding dynamics by
presenting several snapshots of the macroscopic density at some specific
times $t = 0$, $50$, $100$ and $150$. The numerical results obtained
with our PIC methods are in good agreements with those obtained with a
finite difference scheme \cite{FiYa18}  for the asymptotic model \eqref{eq:gc}.

\begin{figure}
	\centering	
    	{\includegraphics[width=0.49\linewidth]{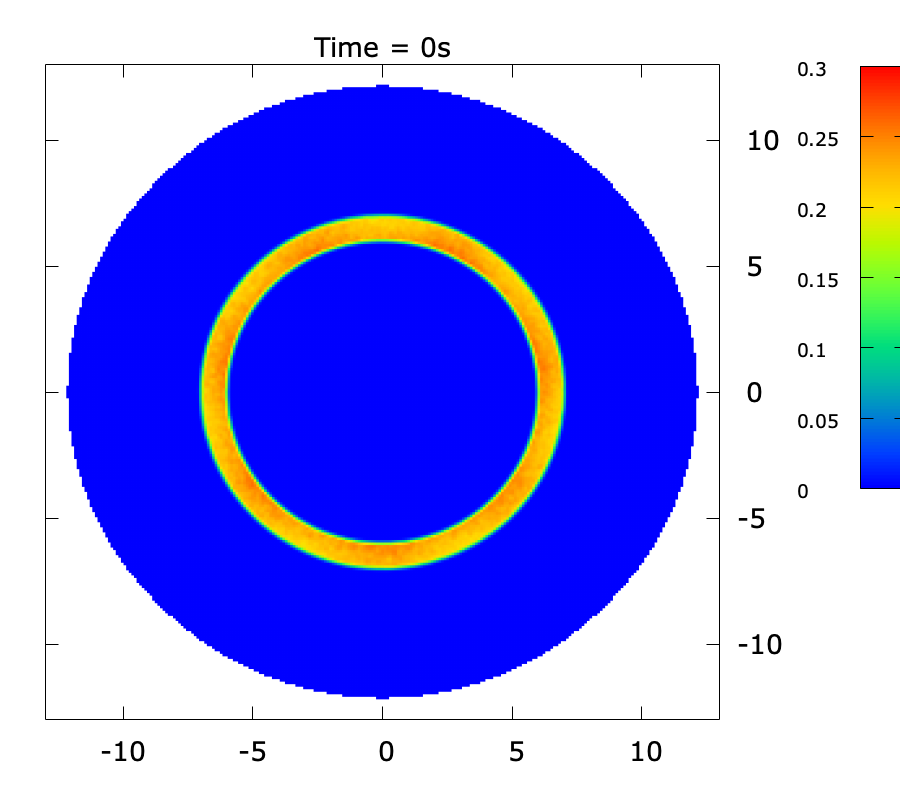}}
	{\includegraphics[width=0.49\linewidth]{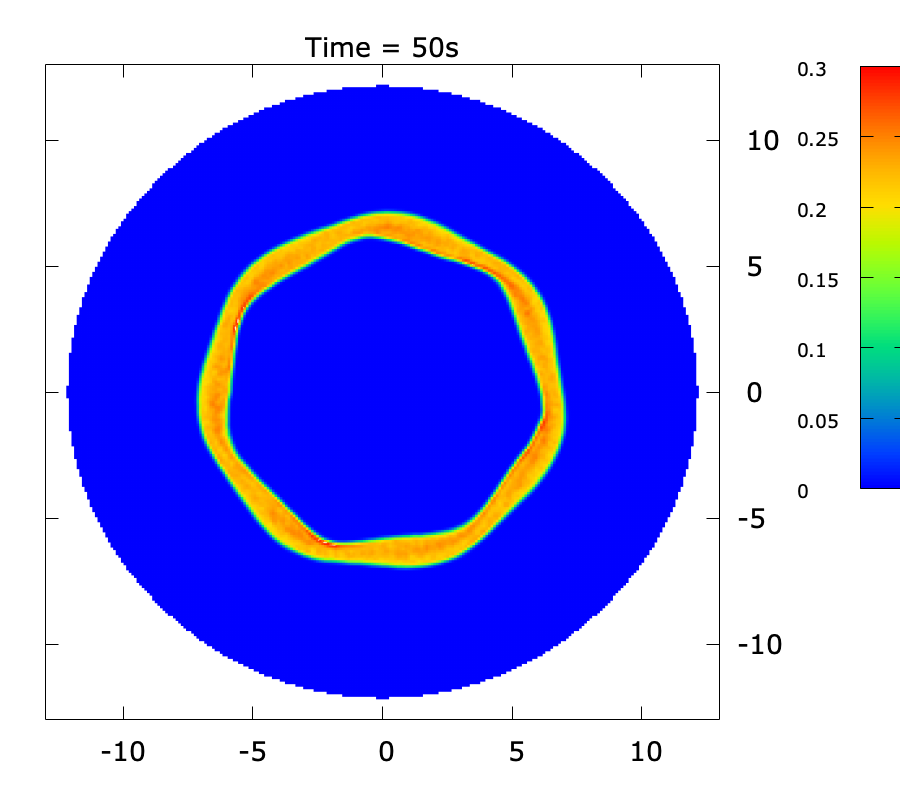}}
	{\includegraphics[width=0.49\linewidth]{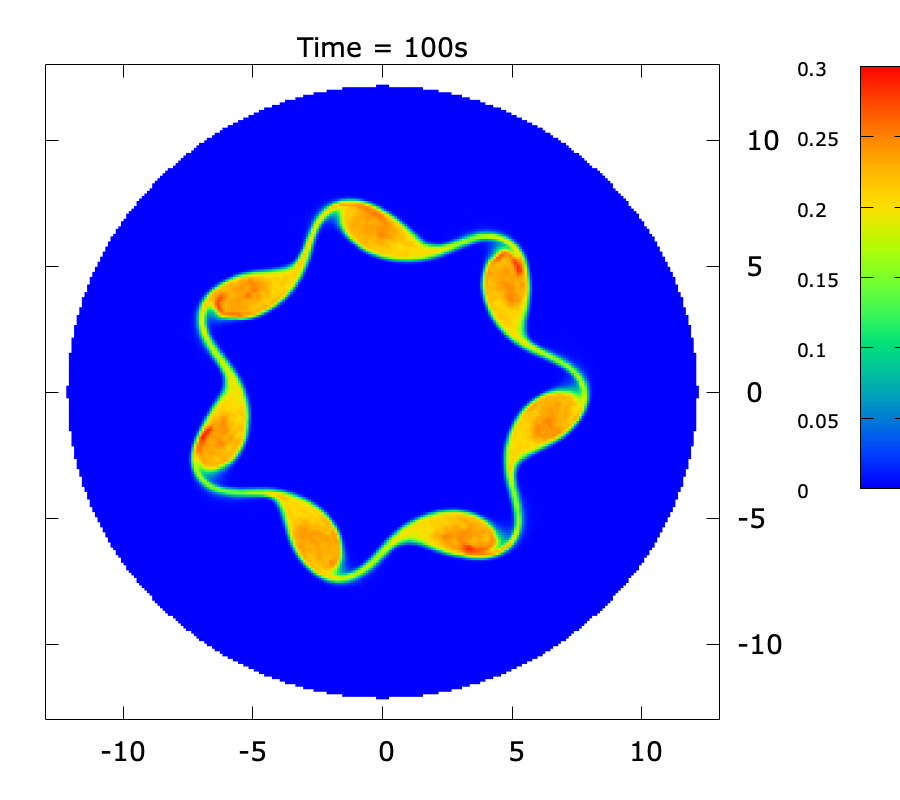}}
       	{\includegraphics[width=0.49\linewidth]{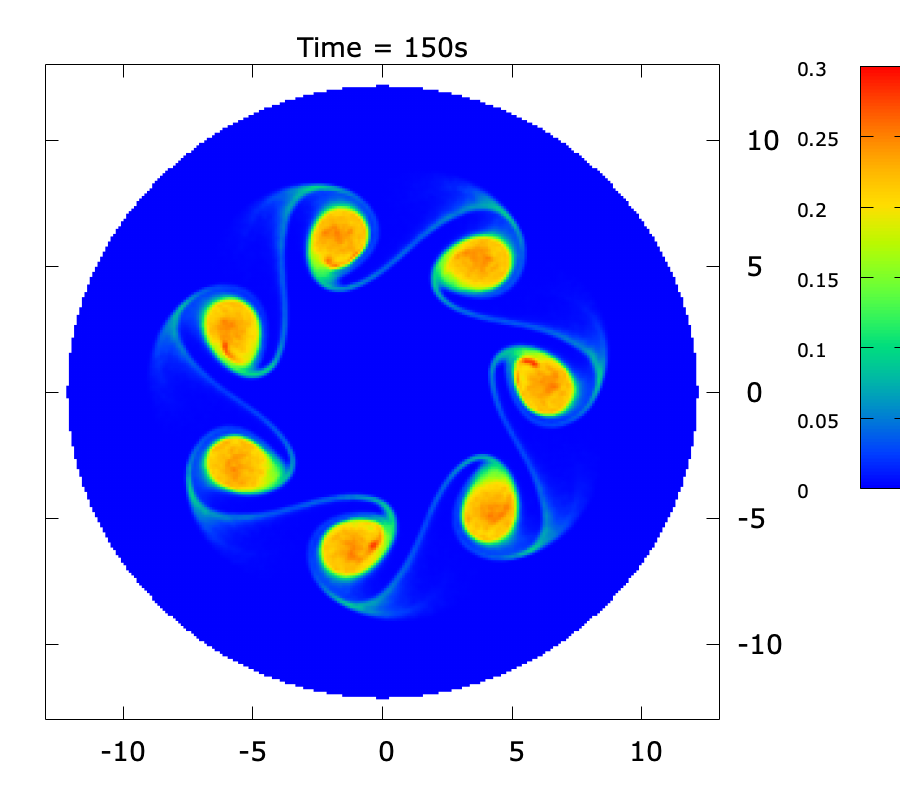}}
	\caption{\textbf{Diocotron instability:} Macroscopic density evolution at some specific time $T = 0, 50, 100, 150$ with
          $\eps=10^{-2}$ and $(\Delta
          t,\Delta \bx) = (0.1,0.1)$, using  the modified Crank-Nicolson
          scheme \eqref{scheme:modified_CN}.}
	\label{Fig:Fig8}
\end{figure}

Furthermore, we performed simulations for various $\eps\in\{10^{-2},\,
5.\,10^{-2}, \,10^{-1}\}$, shown in
Figure \ref{Fig:Fig9}. On the left hand side, we report the total
energy variations, which are theoretically of order $\Delta
t^2$. However,  when $\eps$ is small these variations decrease so
that the total energy is well preserved in the limit $\eps\to 0$. On
the right hand side, we present  the variations of  the adiabatic
invariant $\mu_\eps$, which is not  preserved by the solution to the
Vlasov-Poisson system but only by the asymptotic model \eqref{eq:gc}. Here, we notice that
this quantity oscillates with an amplitude of order
$\eps^2$.  Surprisingly,  even with a large time step $\Delta t$, the
numerical scheme \eqref{scheme:modified_CN}  is able to capture the correct amplitude. This can
also observed on the variations  of  both the potential and the
kinetic energy  in Figures \ref{Fig:Fig7}. The
slow variations definitively correspond to the effect of the  drifts
$\mathbf{E}^{\perp}/b$ and $\nabla_{\mathbf{x}}^{\perp} b/b^{2}$, but the fast oscillations and their
amplitudes are more intricate. In order to verify that these small
oscillations are not a numerical artefact, we compute a reference
solution with a small time step for $\eps=10^{-1}$ and $10^{-2}$ and
compare these  results with the those  obtained from
\eqref{scheme:modified_CN} with $\Delta t=0.1$. Figure \ref{Fig:Fig10}
clearly indicates that with such a time step, the scheme
\eqref{scheme:modified_CN} described accurately the amplitude of these
fast oscillations. However, since $\Delta t$ is much larger than the
oscillation period, the modified scheme can not describe the physical frequency.

\begin{figure}
	\centering	
    	{\includegraphics[width=0.49\linewidth]{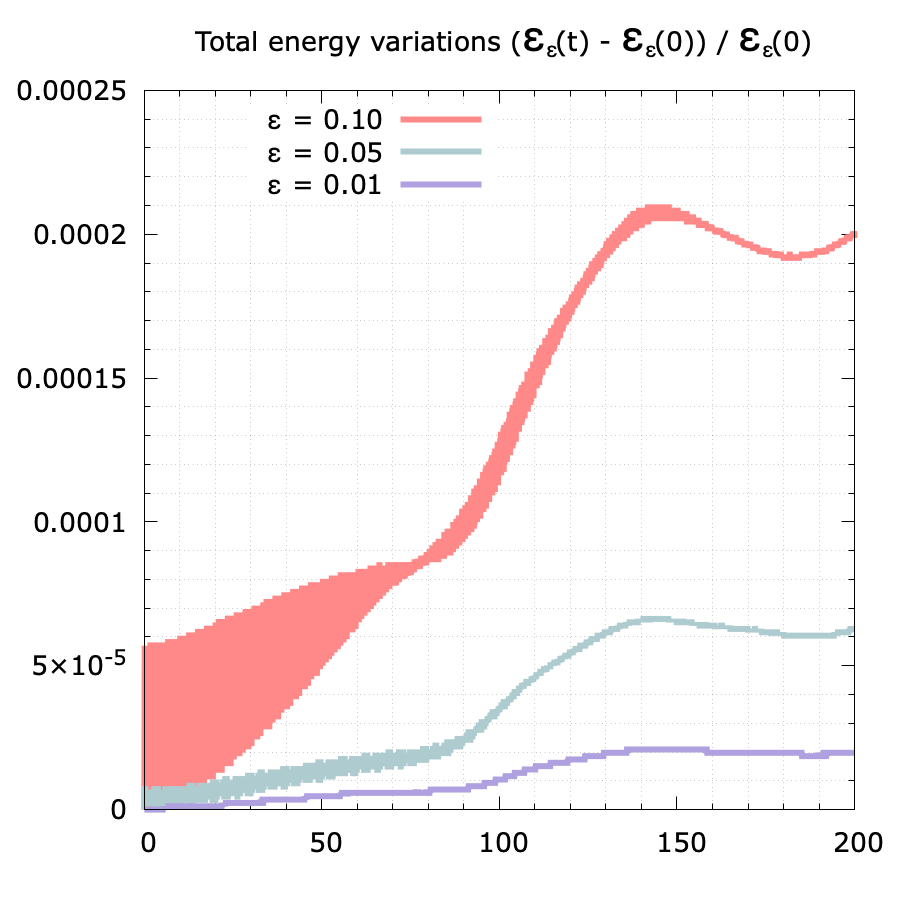}}
	{\includegraphics[width=0.49\linewidth]{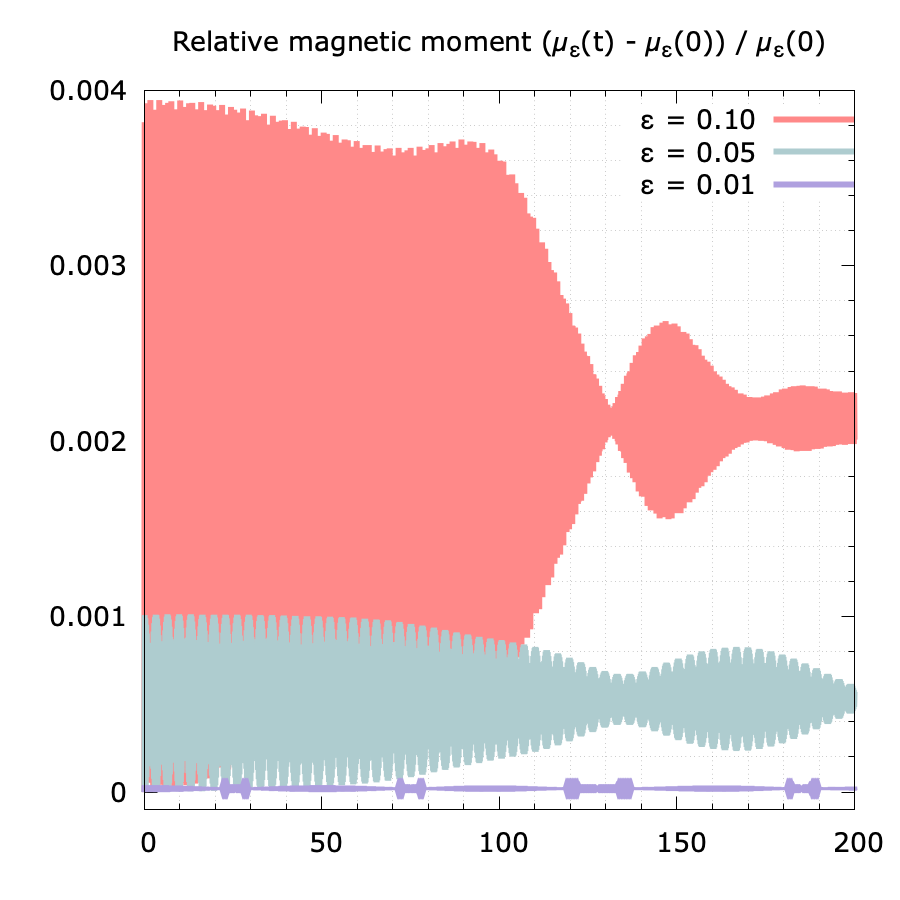}}
	\caption{\textbf{Diocotron instability:} Time evolution of the
          variations of the relative total energy $\cE_\eps$ (left)
          and magnetic moment $\mu_\eps$ (right) with different
          $\eps=10^{-1}$, $5.\,10^{-2}$ and $10^{-2}$ with   $(\Delta
          t,\Delta \bx) = (0.1,0.1)$, using  the modified Crank-Nicolson
          scheme \eqref{scheme:modified_CN}.}
	\label{Fig:Fig9}
\end{figure}

\begin{figure}
	\centering	
    	   {\includegraphics[width=0.49\linewidth]{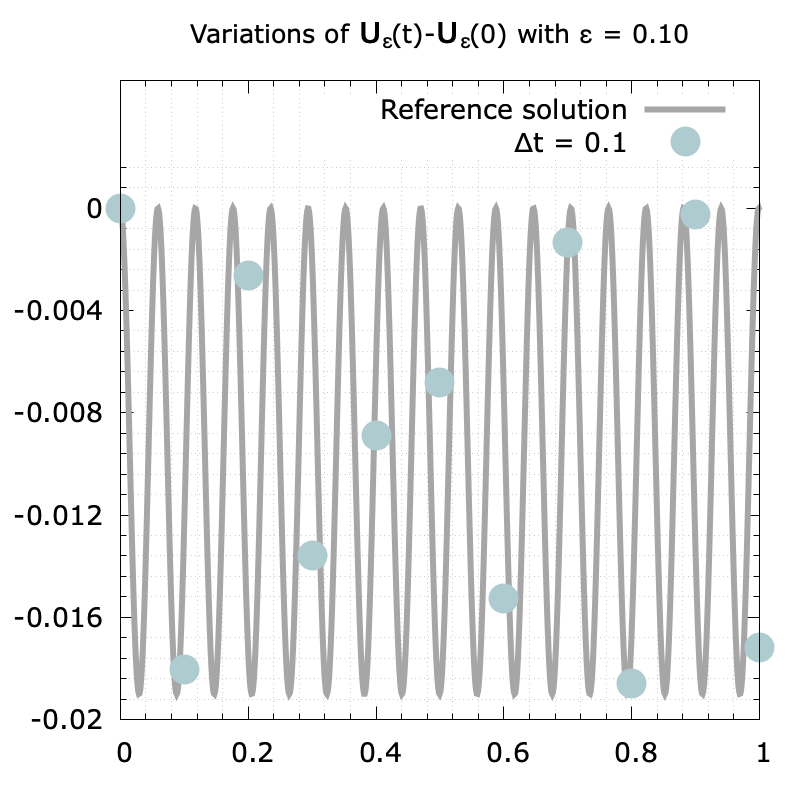}}
	   {\includegraphics[width=0.49\linewidth]{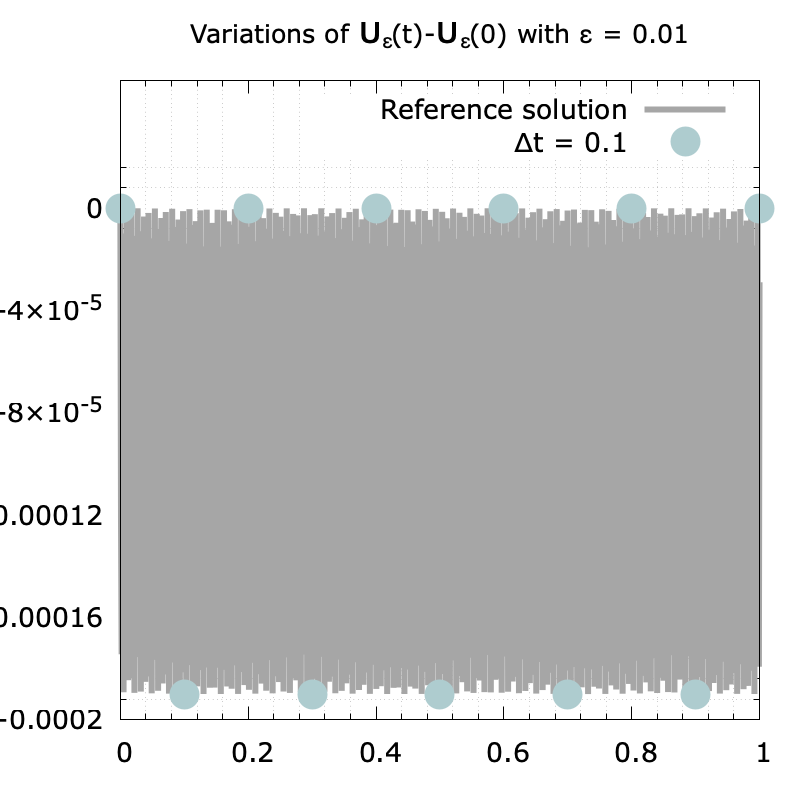}}
	\caption{\textbf{Diocotron instability:} Time evolution of the
          variation of the potential energy $\bU_\eps$ in a  short time interval
          with $\eps = 0.1$ (left) and $\eps = 0.01$ (right) using
          the modified Crank-Nicolson scheme \eqref{scheme:modified_CN}.}
	\label{Fig:Fig10}
\end{figure}

\subsubsection{Vortex interaction}
\label{Vor_DShape}
Finally we consider Vlasov-Poisson system \eqref{eq:VP_system} in a
D-Shaped domain $\Omega \subset \mathbb{R}^{2}$, described by a
mapping from polar coordinates $(r, \theta)$ to Cartesian coordinates
$\mathbf{x} = (x_{1}, x_{2})$  given by
\begin{flalign*}
	\begin{dcases}
		\ds x_{1} \,=\, a \,+\, r \,\cos \left(\theta + \arcsin(0.416)\,\sin(\theta)\right)\,, \\
		\ds x_{2} \,=\, b \,+\, 1.66 \,r\, \sin(\theta)\,,
	\end{dcases}
\end{flalign*}
centered at the origin $(a,b) = (0,0)$ where $0 < r \leq 10$ and $0
\leq \theta \leq 2 \pi$ \textcolor{red}{ as in Figure \ref{Fig:Fig65}}. \textcolor{red}{Here, we consider the
  Particle-In-Cell method with approximatively 100 particles per cell on a uniform grid} of the
rectangle $(-11,11)\times (-17,17)$ with a space discretization
$\Delta \bx = 0.1$. We choose the  initial distribution function such that 
\begin{equation}
    f(0,\mathbf{x}, \mathbf{v}) \,=\, \dfrac{5}{8 \pi^{2}} \left[ \exp\left( -\dfrac{\| \mathbf{x} - \mathbf{x}_{0} \|^{2}}{2} \right) \,+\, \exp\left( -\dfrac{\| \mathbf{x} + \mathbf{x}_{0} \|^{2}}{2} \right) \right]\,\exp \left(-\dfrac{\| \mathbf{v} \|^{2}}{2} \right),
\end{equation}
with $\mathbf{x}_{0} = (1.5, -1.5)$. Moreover, we consider a non homogeneous external magnetic field such as
\begin{equation*}
	b(t, \mathbf{x}) \,=\, \dfrac{20}{\sqrt{400 - x_{1}^{2} - x_{2}^{2}}}\,.
\end{equation*}

As expected for such a configuration, since $b$ is not
homogeneous, even in the asymptotic regime the kinetic and
potential parts of the total energy are not preserved
separately, but the total energy corresponding to the
Vlasov–Poisson system is still preserved. In addition, the
quantity $\mu_\eps$ is an invariant for the guiding center model
but oscillates with a high frequency  for the Vlasov-Poisson system with a
small amplitude. Indeed, Figure \ref{Fig:Fig11}
shows that all these features are again captured  by the  scheme
\eqref{scheme:modified_CN}  even on long time evolutions with a
large time step.   Furthermore, in Figure  \ref{Fig:Fig12},  we visualize the corresponding dynamics by
presenting several snapshots of the time evolution of the
macroscopic charge density for $\eps=10^{-2}$ at time $t = 0$,
$80$, $160$, $240$, $320$ and $400$. Since $\eps\ll
1$,  the  conservation of $e/b(\bx)$ offers coercivity
jointly in $(\bx,e)$ allowing to confine the density in the
D-shape domain. Such a confinement is indeed observed, jointly with the expected eventual merging of two initial
vortices in a relatively short time. We also observe small
filaments at low density, which  generate a "halo" propagating
into the domain as already observed in  \cite{FiYa18,FX22}. 
Finally, from Figure \ref{Fig:Fig13}, similarly to diocotron
instability experiment in Section \ref{Dio_Disk}, the relative
variations of the total energy $\cE_{\eps}$ and the adiabatic
invariant $\mu_{\eps}$ show the ability of preserving these parameters
$(\cE_{\eps}, \mu_{\eps})$ in the limit $\eps \to 0$.

\begin{figure}
	\centering	
    	{\includegraphics[width=0.49\linewidth]{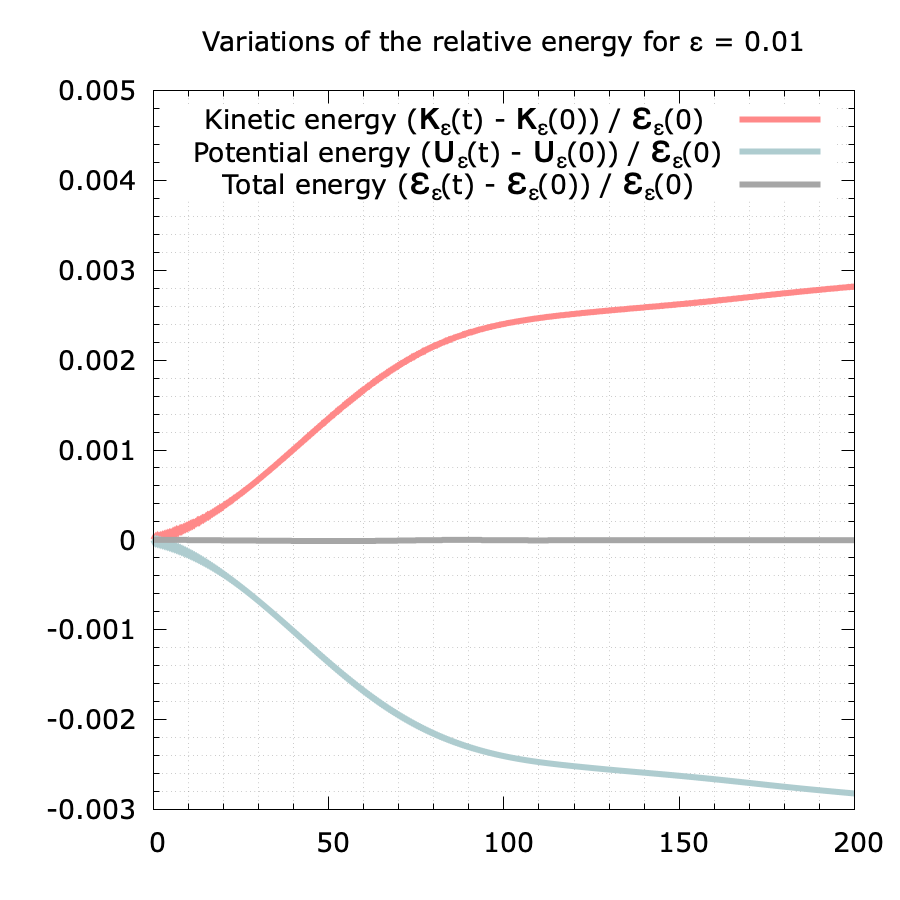}}
        {\includegraphics[width=0.49\linewidth]{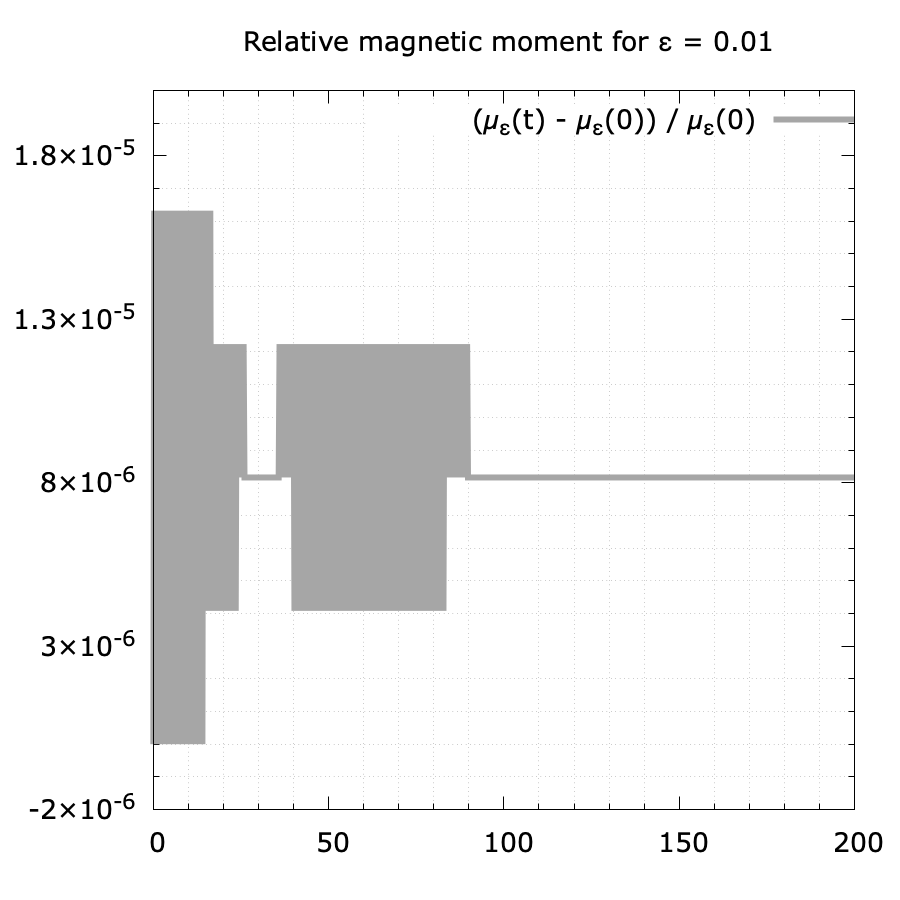}}
	\caption{\textbf{Vortex interaction:}  Time evolution of the
          variations of the relative potential $\bU_\eps$ and kinetic energy $\bK_\eps$ (left)
          and magnetic moment $\mu_\eps$ (right) with
          $\eps=10^{-2}$ with   $(\Delta
          t,\Delta \bx) = (0.1,0.1)$, using  the modified Crank-Nicolson
          scheme \eqref{scheme:modified_CN}.}
	\label{Fig:Fig11}
\end{figure}

\begin{figure}
	\centering	
    	{\includegraphics[width=0.44\linewidth]{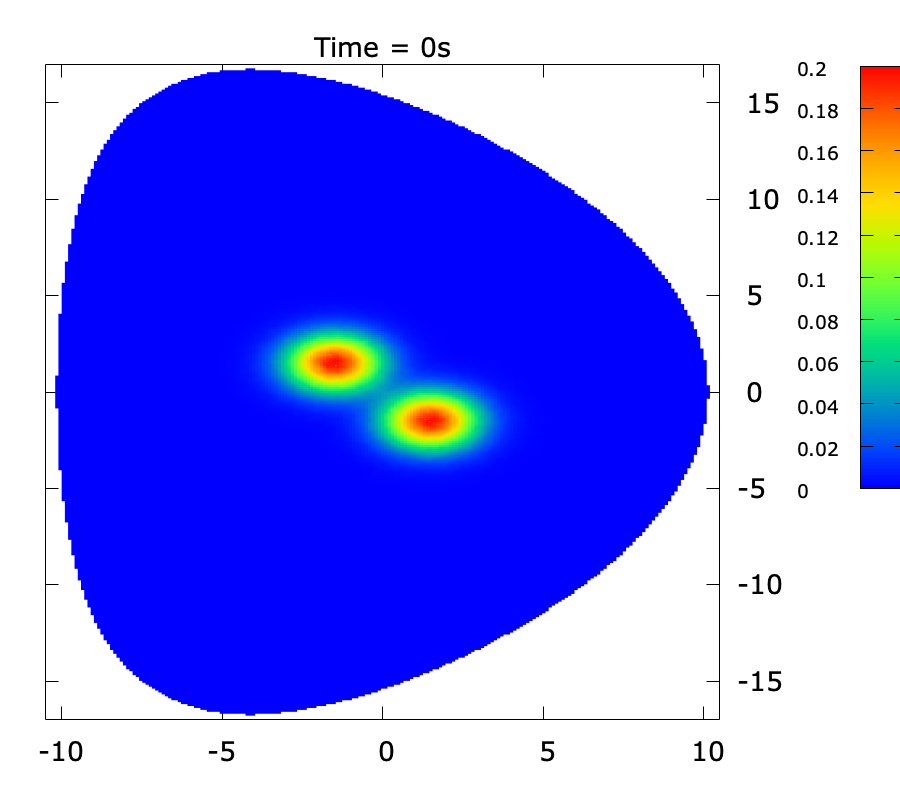}}
	{\includegraphics[width=0.44\linewidth]{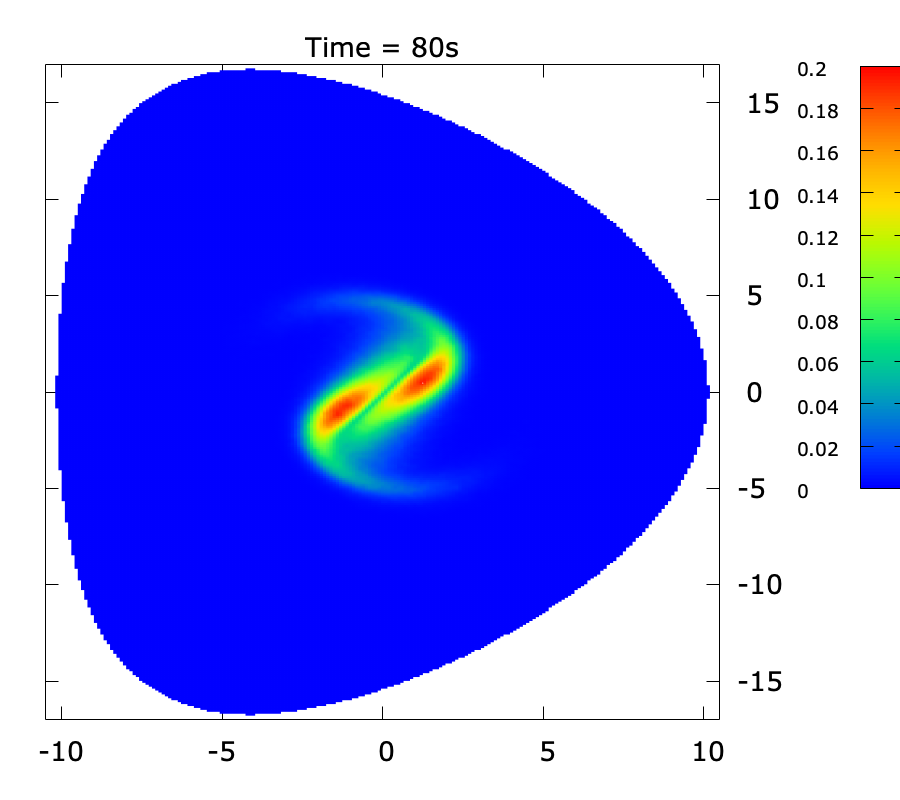}}
	 {\includegraphics[width=0.44\linewidth]{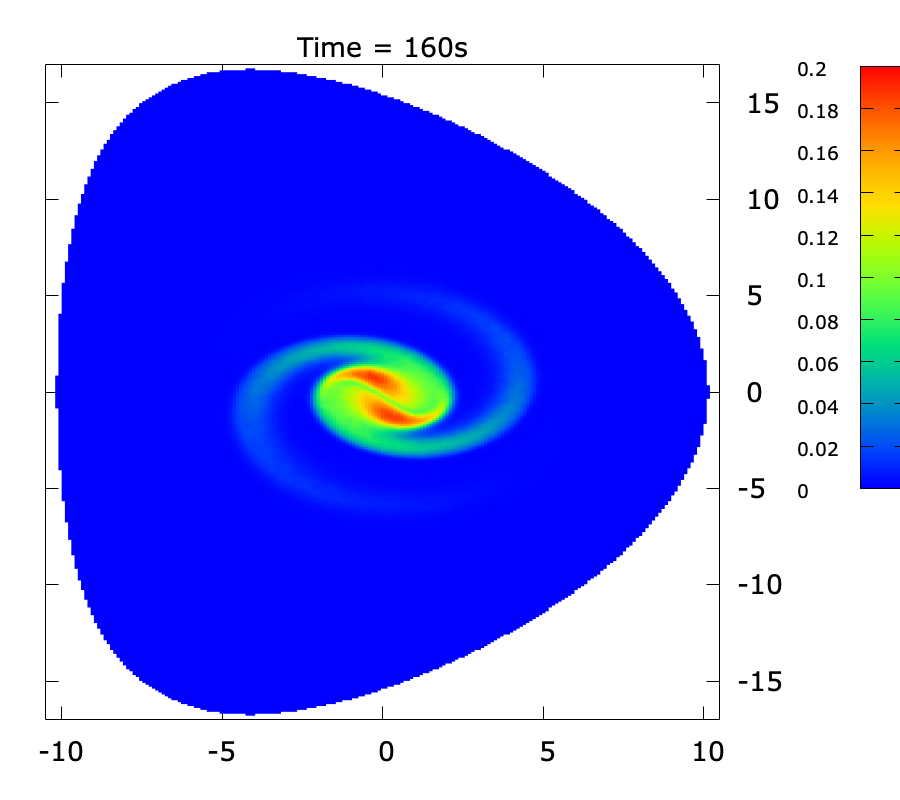}}
       	{\includegraphics[width=0.44\linewidth]{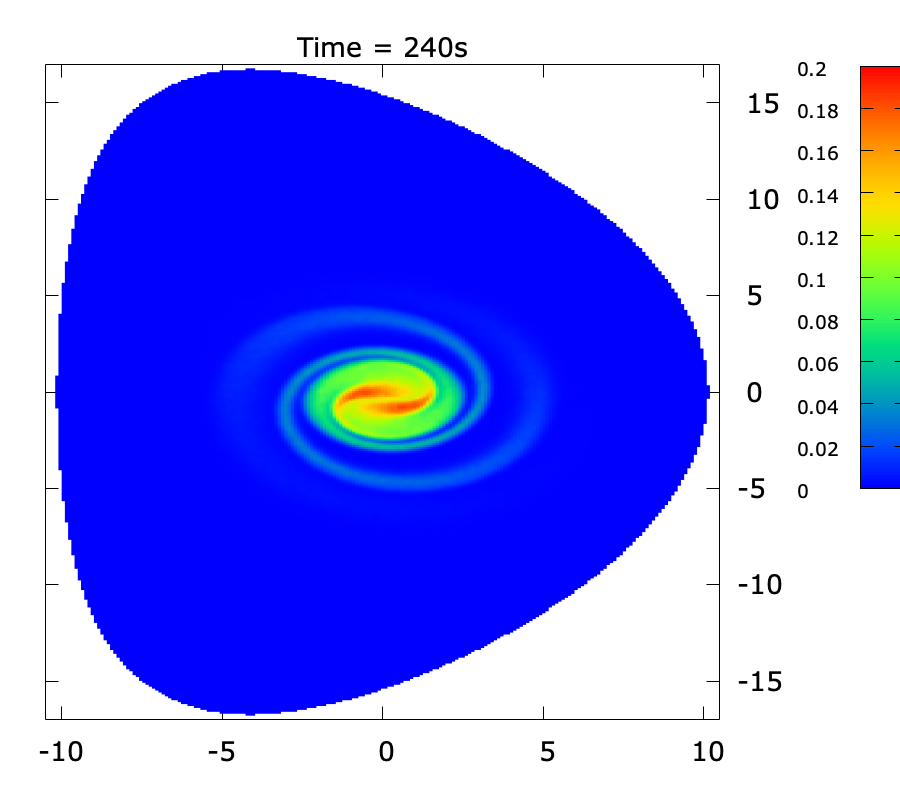}}
	 {\includegraphics[width=0.44\linewidth]{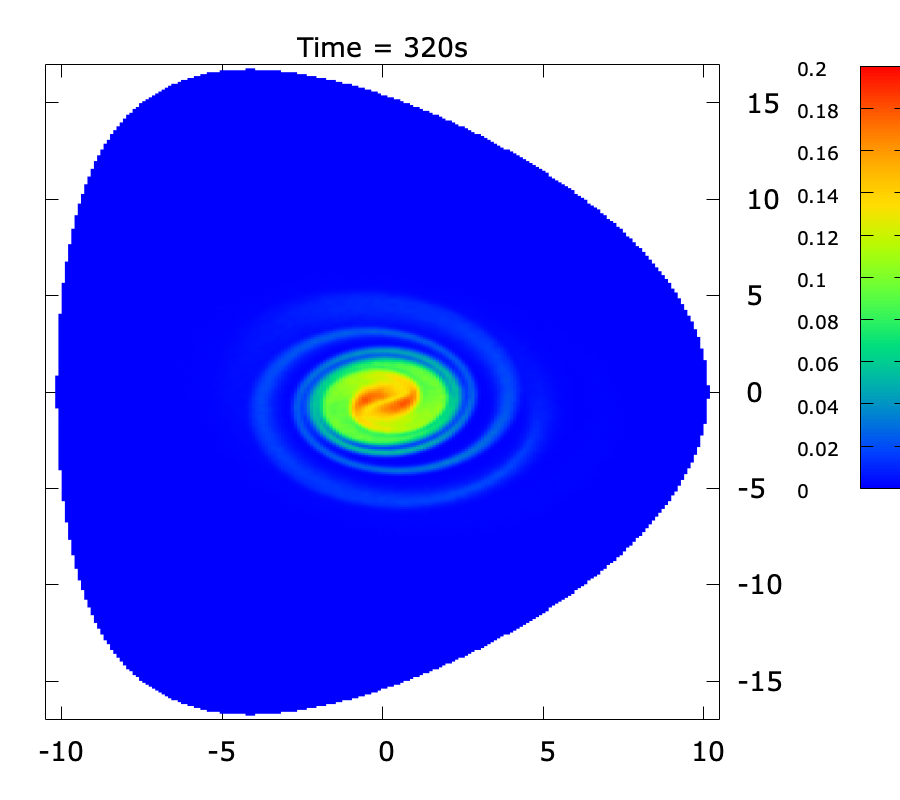}}
       	{\includegraphics[width=0.44\linewidth]{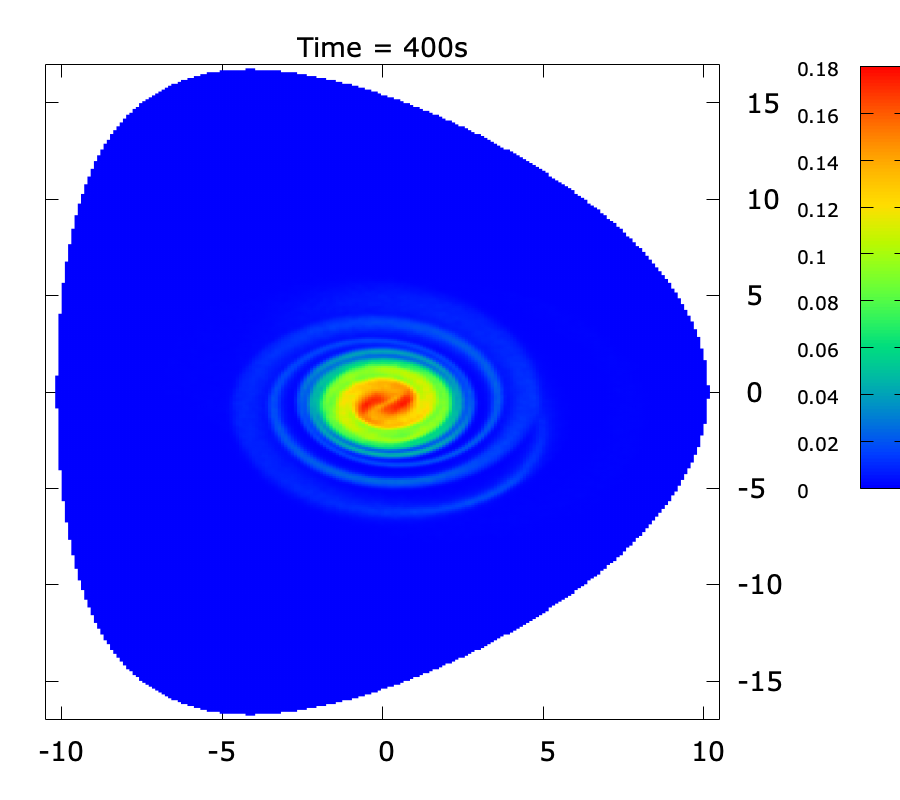}}
	\caption{\textbf{Vortex interaction:} Macroscopic density evolution at some specific time $T$ with
          $\eps=10^{-2}$ with  $(\Delta
          t,\Delta \bx) = (0.1,0.1)$, using  the modified Crank-Nicolson
          scheme \eqref{scheme:modified_CN}.}
	\label{Fig:Fig12}
\end{figure}

\begin{figure}
	\centering	
 	{\includegraphics[width=0.49\linewidth]{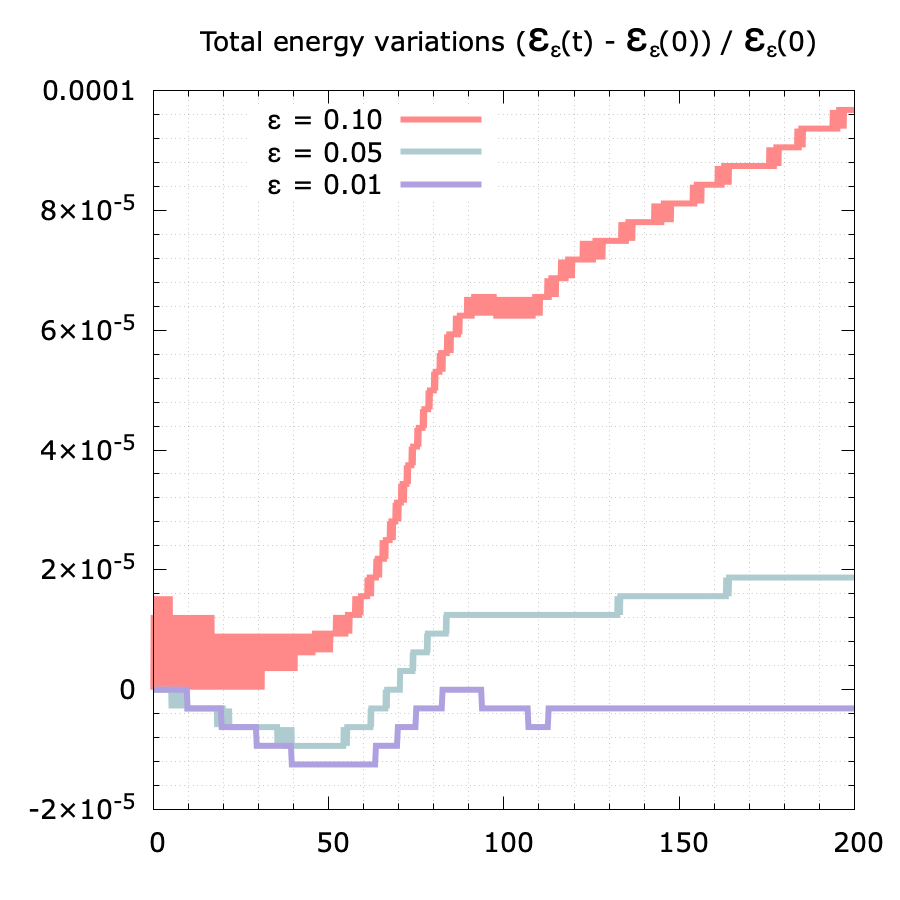}}
 	{\includegraphics[width=0.49\linewidth]{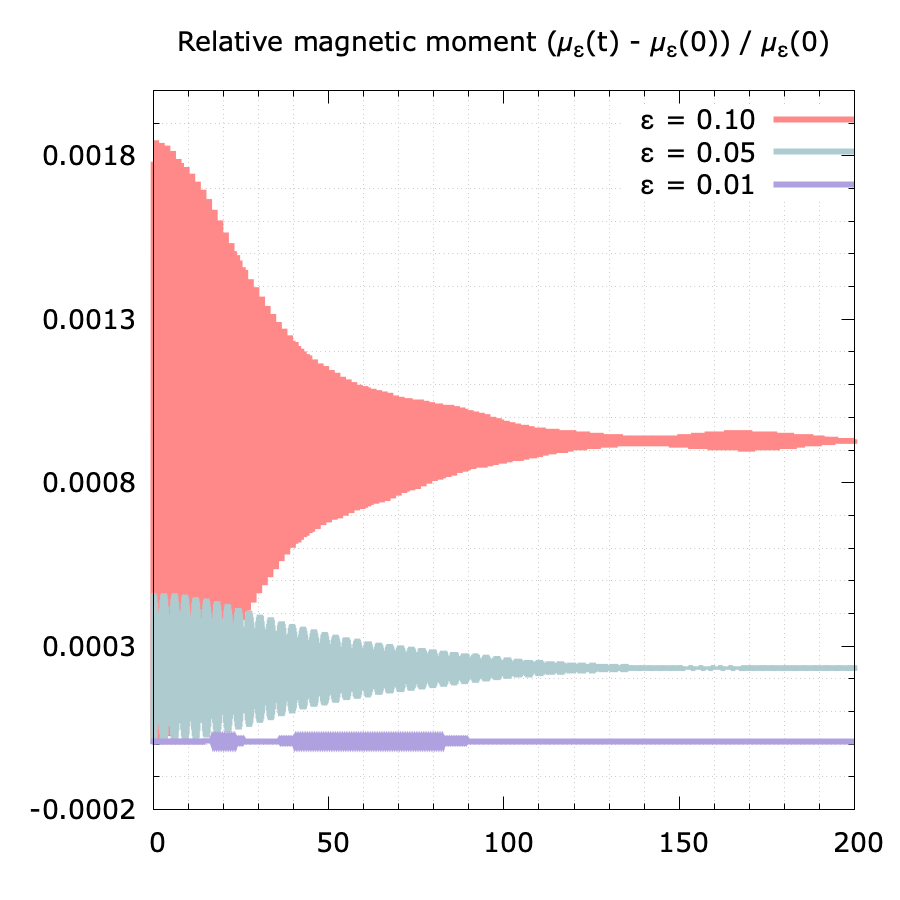}}
	\caption{\textbf{Vortex interaction:} Time evolution of the
          variations of the relative total  energy $\cE_\eps$ (left)
          and magnetic moment $\mu_\eps$ (right) with different
          $\eps=10^{-1}$, $5.\,10^{-2}$ and $10^{-2}$ and  $(\Delta
          t,\Delta \bx) = (0.1,0.1)$, using  the modified Crank-Nicolson
          scheme \eqref{scheme:modified_CN}.}
	\label{Fig:Fig13} 
\end{figure}

In summary, the modified Crank-Nicolson scheme \eqref{scheme:modified_CN} integrated into Particle-In-Cell  method shows consistency and stability of long time simulations with a coarse time step, even for small $\eps \ll 1$. Moreover, the solutions $(\cE_{\eps}, \mu_{\eps})$ of these numerical tests preserve the structure of the limit system $\eps \to 0$ and provide accurately the amplitude of these variations.

%%%%%%%%%%%%%%%%%%%%%%%%%%%%%%%%%%%%%%%%%%%%%%%%%%%%%%%%%%%%%%%%%%%%%%%%%%%%%%%%%%%%%%%%%%%%%%%%%%%%%%%%%%%%%%%%%%%%%%%%%%%%%%%%%%%%%%%%%%%
%
%
%%%%%%%%%%%%%%%%%%%%%%%%%%%%%%%%%%%%%%%%%%%%%%%%%%%%%%%%%%%%%%%%%%%%%%%%%%%%%%%%%%%%%%%%%%%%%%%%%%%%%%%%%%%%%%%%%%%%%%%%%%%%%%%%%%%%%%%%%%%

\section{Conclusion}

In this paper, we propose a modified Crank-Nicolson time
discretization technique for Particle-In-Cell simulations. Our
approach guarantees the accuracy and stability of small-scale
variables, even when the magnetic field amplitude becomes large, thus
correctly capturing their long-term behavior, including in cases of
inhomogeneous magnetic fields and coarse time grids. Comparison with
previous contributions on Crank-Nicolson and semi-implicit schemes
demonstrates the effectiveness of our approach, with accuracy
improving by several orders of magnitude. Even for large-time
simulations, the resulting numerical schemes provide acceptable
accuracy on physical invariants (total energy for all  $\eps$,
magnetic moment when $\eps\ll 1$), while fast scales are automatically filtered when the time step is large compared to $\varepsilon^2$.

As a theoretical validation, we have proven that, under certain
stability assumptions on the numerical approximations, the slow part
of the approximation converges, when $\varepsilon \to 0$, to the
solution of a limit scheme of asymptotic evolution, preserving the
initial order of precision.
\textcolor{red}{
Let us notice that here we chose to use a Crank-Nicolson scheme, which is very popular in computational physics;
however, our strategy can be easily applied to higher-order schemes \cite{FiRo16}
(such as IMEX or multi-step methods).}

\textcolor{red}{The next step involves extending this strategy to $3D$
  particle simulations, incorporating curvature effects and addressing
  both the parallel and orthogonal directions relative to the magnetic
  field. These aspects give rise to more complex phenomena; however,
  by considering the slow variables and the conservation of energy, we
  are able to apply the same approach effectively \cite{FiRo23}. A complete analytical study (including proofs of stability), in the spirit of \cite{FiRo21}, is currently under investigation \cite{FRT25}.}

%\section{Acknowledgements}

\appendix 

\section{Formal asymptotic behavior for a given electromagnetic field}\label{s:app}

For the convenience of the reader, we provide here the main algebraic manipulations of equations \eqref{eq:ODE_system} leading to the guiding-center system \eqref{eq:guiding_center_system}. The system is well-known and no details of the derivation is needed in the rest of the paper but consistency with \eqref{eq:guiding_center_system} is crucially used in our evaluations of numerical schemes.

To begin with, in order to eliminate $\eps^{-1}\bv_\eps$ from \eqref{eq:augmented_ODE_system}, we observe that equation \eqref{eq:v} may be replaced with 
\begin{equation}
  \label{eq:v2}
	\eps\, \dfrac{\dD }{\dD t} \left(
          \dfrac{\mathbf{v}_{\eps}^{\perp}}{b(\mathbf{x}_{\eps})} \right)
        \,=\,
        \dfrac{\mathbf{E}^{\perp}(\mathbf{x}_{\eps})}{b(\mathbf{x}_{\eps})}
        \,-\, \left(\dfrac{\nabla_{\mathbf{x}} b
            (\mathbf{x}_{\eps})}{b^{2}(\mathbf{x}_{\eps})} \cdot
          \mathbf{v}_{\eps} \right) \,\mathbf{v}_{\eps}^{\perp} \,+\, \dfrac{\mathbf{v}_{\eps}}{\eps}\,.
\end{equation}
In order to characterize the asymptotic dynamics of the slow variables
$(\mathbf{x}_{\eps}, e_{\eps})$ when $\eps\rightarrow 0$, we notice that their equations depend
linearly on $\bv_\eps$, hence as in \cite{FiRo20} we write for all $t\in\R^+$ and any linear operator $\bL(t)$
\[
\eps \,\dfrac{\dD}{\dD t}\left( \bL(t)
  \frac{\mathbf{v}_{\eps}^\perp}{b(\bx_\eps)} \right)  \,=\,
\eps\dfrac{\dD\bL(t)}{\dD t} \,
\frac{\mathbf{v}_{\eps}^\perp}{b(\bx_\eps)}
\,+\, \bL(t)\left(
\frac{\mathbf{E}^\perp(\mathbf{x}_{\eps})}{b(\mathbf{x}_{\eps})}  \,-\, \left(\frac{\nabla_\bx
  b(\mathbf{x}_{\eps})}{b^2(\mathbf{x}_{\eps})} \cdot \bv_\eps\right)\,\bv_\eps^\perp  \,+\,  \frac{\mathbf{v}_{\eps}}{\eps}\right)\,.
\]
Therefore, applying the latter to $\mathbf{L}_{\mathbf{x}}(t) : \mathbf{u} \in \mathbb{R}^{2} \mapsto  \mathbf{u}  \in \mathbb{R}^{2}$ and $\mathbf{L}_{e}(t) : \mathbf{u} \in \mathbb{R}^{2} \mapsto \mathbf{E} (\mathbf{x}_{\eps}) \cdot \mathbf{u} \in \mathbb{R}$ and inserting the outcome into the system of slow variables \eqref{eq:augmented_ODE_system}, we obtain
\begin{equation}
  \label{eq:augmented_ODE_system2}
	\left\{\begin{array}{l}
 	\ds\dfrac{\dD}{\dD t} \left( \mathbf{x}_{\eps} - \eps\,
          \dfrac{\mathbf{v}_{\eps}^{\perp}}{b(\mathbf{x}_{\eps})} \right)
        \,=\, -
        \dfrac{\mathbf{E}^{\perp}(\mathbf{x}_{\eps})}{b(\mathbf{x}_{\eps})}
        \,+\, \left(\dfrac{\nabla_{\mathbf{x}}
            b(\mathbf{x}_{\eps})}{b^{2}(\mathbf{x}_{\eps})} \cdot
          \mathbf{v}_{\eps}\right) \, \mathbf{v}_{\eps}^{\perp},\\[1.2em]
	\ds\dfrac{\dD}{\dD t} \left( e_{\eps} - \eps\, \mathbf{E}(\mathbf{x}_{\eps}) \cdot \dfrac{\mathbf{v}_{\eps}^{\perp}}{b(\mathbf{x}_{\eps})} \right) \,=\, -\left(\mathbf{v}_{\eps}\,\nabla_{\mathbf{x}}\right) \left(\dfrac{\mathbf{E}}{b}\right)(\mathbf{x}_{\eps})\cdot\,\mathbf{v}^\perp_{\eps}\,.
	\end{array}\right.
    \end{equation}
    This latter system may replace 
    \eqref{eq:augmented_ODE_system} and is 
    not  stiff with respect to $\eps\ll 1$, but it also suggests the introduction of new variables, as
    the guiding center variable $\mathbf{x}_{\eps} - \eps\,
    {\mathbf{v}_{\eps}^{\perp}}/{b(\mathbf{x}_{\eps})} $. However,  these new equations still involve at leading order quadratic terms in $\mathbf{v}_{\eps}$ on the right hand side. Hence, following \cite{FiRo20}, we turn our attention to bilinear operators
    $\mathbf{A}(t)$ and derive
    \begin{eqnarray}
    \label{eq:bilinear}
\eps^2 \dfrac{\dD}{\dD t}\left( \mathbf{A}(t)\left(\mathbf{v}_{\eps},
          \dfrac{\mathbf{v}_{\eps}^{\perp}}{b(\bx_\eps)}\right) \right)&=& 
        \eps^2 \dfrac{\dD \mathbf{A}(t)}{\dD t} \left(\mathbf{v}_{\eps}\,,
          \dfrac{\mathbf{v}_{\eps}^{\perp}}{b(\bx_\eps)}\right) +
        \eps\mathbf{A}(t)\left(\frac{\mathbf{E}(\mathbf{x}_{\eps})}{b(\bx_\eps)}\,,
          \mathbf{v}_{\eps}^{\perp}\right)
        \\[0.9em]
      	\nonumber
      	 &+&\eps\mathbf{A}(t)\left(\mathbf{v}_{\eps}\,,
          \frac{\mathbf{E}^{\perp}(\mathbf{x}_{\eps})}{b(\bx_\eps)}\right)
                                                             -\eps\,\left(\frac{\nabla_\bx
             b(\mathbf{x}_{\eps})}{b^2(\mathbf{x}_{\eps})} \cdot
           \bv_\eps\right)\, \mathbf{A}(t)(\mathbf{v}_{\eps},
         \mathbf{v}_{\eps}^{\perp})  
         \\[0.9em]
      	\nonumber
         &-&\mathbf{A}(t)(\mathbf{v}_{\eps}^{\perp}, \mathbf{v}_{\eps}^{\perp}) +\mathbf{A}(t)(\mathbf{v}_{\eps}, \mathbf{v}_{\eps}).
      \end{eqnarray}
 Since $(\mathbf{v}_{\eps}, \mathbf{v}_{\eps}^{\perp})$ is an orthogonal basis
of $\mathbb{R}^{2}$ (when $\bv_\eps$ is non zero), introducing the operator ${\rm Tr}$ one observes that
\begin{flalign*}
\| \mathbf{v}_{\eps} \|^{2}\, {\rm Tr}(\mathbf{A}(t)) \,=\, \mathbf{A}(t)(\mathbf{v}_{\eps}^{\perp}, \mathbf{v}_{\eps}^{\perp}) \,+\, \mathbf{A}(t)(\mathbf{v}_{\eps}, \mathbf{v}_{\eps})\,.
\end{flalign*}
Therefore, one may reformulate \eqref{eq:bilinear} as 
 \begin{equation*}
        \mathbf{A}(t)(\mathbf{v}_{\eps}, \mathbf{v}_{\eps}) \,=\, \dfrac{1}{2}\, \|
        \mathbf{v}_{\eps} \|^{2} \, {\rm Tr}(\mathbf{A}(t)) \,+\, \eps^{2} \,\dfrac{\dD \kappa_{\mathbf{A}}}{\dD t}(t,\mathbf{x}_{\eps}, \mathbf{v}_{\eps}) \,+\, \eps\, \eta_{\mathbf{A}}(t,\mathbf{x}_{\eps}, \mathbf{v}_{\eps})\,,
    \end{equation*}
where $\kappa_{\mathbf{A}}$ and $\eta_{\mathbf{A}}$ are given by
\[
\left\{\begin{array}{l}
	\ds\kappa_{\mathbf{A}}(t, \mathbf{x}_{\eps}, \mathbf{v}_{\eps})  \,=\, \dfrac{1}{2}\,\mathbf{A}(t)\left( \mathbf{v}_{\eps}, \dfrac{\mathbf{v}_{\eps}^{\perp}}{b(\mathbf{x}_{\eps})} \right),\\[0.9em]
	\ds\eta_{\mathbf{A}}(t, \mathbf{x}_{\eps}, \mathbf{v}_{\eps}) \,=\,
         \dfrac{\nabla_{\mathbf{x}} b(\bx_\eps)}{2\, b^2(\bx_\eps)} \cdot \mathbf{v}_{\eps}  \,\mathbf{A}(t) \left( \mathbf{v}_{\eps},
         \mathbf{v}_{\eps}^{\perp} \right)  \,-\,
         \frac{\eps}{2}\,\frac{\dD\mathbf{A}(t)}{\dD t}\left( \mathbf{v}_{\eps}, \dfrac{\mathbf{v}_{\eps}^{\perp}}{b(\mathbf{x}_{\eps})} \right)\\[0.9em]
	\qquad\qquad\qquad\ds- \frac{1}{2}\left( \mathbf{A}(t) \left( \frac{\mathbf{E}(\mathbf{x}_{\eps})}{b(\bx_\eps)}, \mathbf{v}_{\eps}^{\perp}\right) \,+\, \mathbf{A}(t) \left(\mathbf{v}_{\eps}, \dfrac{\mathbf{E}^{\perp}(\mathbf{x}_{\eps})}{b(\mathbf{x}_{\eps})} \right)\right)\,.
\end{array}\right.
\]
To apply the latter to the bilinear maps $\mathbf{A}_{\mathbf{x}}(t) : (\mathbf{u}_{1}, \mathbf{u}_{2}) \in
\mathbb{R}^{2} \times  \mathbb{R}^{2} \mapsto  \mathbf{u}_{2}^{\perp}
\dfrac{\nabla_{\mathbf{x}} b(\mathbf{x}_{\eps})}{b^{2}(\mathbf{x}_{\eps})}
\cdot \mathbf{u}_{1}  \in \mathbb{R}^{2}$ and $\mathbf{A}_{e}(t) :
(\mathbf{u}_{1}, \mathbf{u}_{2}) \in \mathbb{R}^{2} \times  \mathbb{R}^{2}
\mapsto  (\mathbf{u}_{1}\cdot\nabla_{\mathbf{x}})\left(
  \dfrac{\mathbf{E}(\mathbf{x}_{\eps})}{b(\mathbf{x}_{\eps})} \right)\cdot
\mathbf{u}_{2}^\perp  \in \mathbb{R}$, we compute
\[
	{\rm Tr}\left( \mathbf{A}_{\mathbf{x}}(t) \right) \,=\,
        \dfrac{\nabla_{\mathbf{x}}^{\perp}
          b(\mathbf{x}_{\eps})}{b^{2}(\mathbf{x}_{\eps})} \quad{\rm and
        }\quad {\rm Tr}\left( \mathbf{A}_{e}(t) \right) = {\rm div}_{\mathbf{x}} \left( - \dfrac{\mathbf{E}^{\perp}}{b}(\mathbf{x}_{\eps})  \right)\,.
\]
Then we may replace \eqref{eq:augmented_ODE_system2} with the new
\begin{flalign} \label{eq:leading_order}
	\left\{\begin{matrix}
    & \ds\dfrac{\dD}{\dD t} \left( \mathbf{x}_{\eps} - \eps
      \,\dfrac{\mathbf{v}_{\eps}^{\perp}}{b(\mathbf{x}_{\eps})}
      - \eps^2\,\kappa_{\bx_\eps}(t, \mathbf{x}_{\eps}, \mathbf{v}_{\eps}) \right) \,=\, - \dfrac{\mathbf{E}^{\perp}(\mathbf{x}_{\eps})}{b(\mathbf{x}_{\eps})} + e_{\eps} \dfrac{\nabla_{\mathbf{x}}^{\perp} b}{b^{2}} (\mathbf{x}_{\eps}) \;+\, \eps\,  \eta_{\bx_\eps}(t, \mathbf{x}_{\eps}, \mathbf{v}_{\eps})\,, \\[0.9em]
    & \ds\dfrac{\dD }{\dD t} \left( e_{\eps} - \eps\,
      \mathbf{E}(\mathbf{x}_{\eps}) \cdot
      \dfrac{\mathbf{v}_{\eps}^{\perp}}{b(\mathbf{x}_{\eps})} +
      \eps^2\,\kappa_{e_\eps}(t, \mathbf{x}_{\eps}, \mathbf{v}_{\eps})
    \right) \,=\, e_{\eps} {\rm div}_{\mathbf{x}} \left( \dfrac{ \,\mathbf{E}^{\perp}}{b} \right)(\mathbf{x}_{\eps})- \eps \,\eta_{e_\eps}(t, \mathbf{x}_{\eps}, \mathbf{v}_{\eps})\,,
      \end{matrix}\right.
  \end{flalign}
  coupled with \eqref{eq:v} for $\bv_\eps$ and $\kappa_\alpha$ and $\eta_\alpha$, for $\alpha\in\{\bx_\eps,
\,e_\eps\}$ are short-hand for $\kappa_{\bA_\alpha}$ and $\kappa_{\bA_\alpha}$.

This last formulation easily allows to characterize  the asymptotic
limit as $\eps \rightarrow 0$ of the  slow variables
$(\mathbf{x}_{\eps}, e_{\eps})$ provided one already knows that the fast variable $\mathbf{v}_{\eps}$ remains
bounded. Note in particular that $\bE$ is curl-free thus $\mathbf{E}^{\perp}$ is divergence-free.

\bibliographystyle{abbrv}
\bibliography{refer}

\end{document}